\numberwithin{equation}{section}
\renewcommand{\paragraph}[1]{\noindent\textbf{#1}}
\crefname{equation}{equation}{equations}
\crefname{figure}{figure}{figures}
\crefname{lem}{lemma}{lemmas}
\newtheorem{thm}{Theorem}[section]
\newtheorem{lem}[thm]{Lemma}
\newtheorem{cor}[thm]{Corollary}
\newtheorem{prop}[thm]{Proposition}
\newtheorem*{mainque*}{Main question}
\newtheorem{conj}[thm]{Conjecture}
\theoremstyle{definition}
\newtheorem{defi}[thm]{Definition}
\newtheorem{ex}[thm]{Example}
\theoremstyle{remark}
\newtheorem{rk}[thm]{Remark}
\newtheorem*{pb*}{Problem}
\newtheorem*{que*}{Question}
\newtheorem{conv}[thm]{Convention}
\newtheorem{assu}[thm]{Assumption}
\newtheorem*{intui*}{Intuition}
\let\oldequation\equation
\let\oldendequation\endequation
\renewenvironment{equation}{\begin{linenomath*}\oldequation}{\oldendequation\end{linenomath*}}
\let\expandafter\oldnequation\csname equation*\endcsname
\let\expandafter\endoldnequation\csname endequation*\endcsname
\renewenvironment{equation*}{\begin{linenomath*}\oldnequation}{\endoldnequation\end{linenomath*}}
\def\R{{\mathbb R}}    
\renewcommand{\phi}{\varphi} 
\newcommand{\libr}{\llbracket} 
\newcommand{\ribr}{\rrbracket} 
\DeclareMathOperator{\Ima}{Im}
\DeclareMathOperator{\Ker}{Ker}
\newcommand{\pcleq}{\preccurlyeq}
\newcommand{\pcgeq}{\succcurlyeq}
\newcommand{\squarediag}[8]{
    \begin{tikzcd}[arrows=-stealth, ampersand replacement=\&, column sep=4em, row sep=3em]
        #3 \arrow[r,"#7"] \& #4 \\
        #1 \arrow[r,"#5"'] \arrow[u, "#6"] \& #2 \arrow[u,"#8"']
    \end{tikzcd}
    }
    \newcommand{\ses}[6][0]{
        \ifodd#1
        \begin{tikzcd}[ampersand replacement=\&]
            #2 \arrow[r,"#5"] \& #3 \arrow[r,"#6"] \& #4
        \end{tikzcd}
        \else
        \begin{tikzcd}[ampersand replacement=\&]
            0 \arrow[r] \& #2 \arrow[r,"#5"] \& #3 \arrow[r,"#6"] \& #4 \arrow[r] \& 0
        \end{tikzcd}
        \fi
        }
\newcommand{\cat}[1]{{\normalfont\mathsf{#1}}} 
\newcommand{\colim}[1][ ]{\varinjlim_{#1}}  
\renewcommand{\lim}[1][ ]{\varprojlim_{#1}}
\DeclareMathOperator{\Fun}{Fun}
\DeclareMathOperator{\Hom}{Hom}
\DeclareMathOperator{\Ran}{Ran}
\DeclareMathOperator{\Lan}{Lan}
\newcommand{\mymatrix}[1]{\begin{psmallmatrix} #1 \end{psmallmatrix}}
\def\field{\textnormal{\textbf{k}}}
\newcommand{\catper}[1]{\cat{Per}\left(#1\right)}
\def\mor{{\rho}}
\def\basemod{M}
\newcommand{\extension}[1]{\widetilde{#1}}
\DeclareMathOperator{\End}{End}
\DeclareMathOperator{\convexes}{Conv}
\DeclareMathOperator{\hullconvex}{conv} 
\newcommand{\supp}[1]{\mathrm{supp}\left(#1\right)}
\newcommand{\decclass}[1]{\langle #1 \rangle}
\def\testsubsets{\mathcal{Q}}
\def\indecsupports{\mathcal{S}}
\NewDocumentCommand{\fullsubpos}{o}{\IfNoValueTF{#1}{\textnormal{PSub}}{\textnormal{PSub}(#1)}}
\def\subgrids{\fullsubpos}
\NewDocumentCommand{\squares}{o}{\fullsubpos_2(#1)}
\NewDocumentCommand{\intervals}{o}{\IfNoValueTF{#1}{\textnormal{Int}}{\textnormal{Int}(#1)}}
\NewDocumentCommand{\rectangles}{o}{\IfNoValueTF{#1}{\textnormal{Rec}}{\textnormal{Rec}(#1)}}
\NewDocumentCommand{\blocks}{o}{\IfNoValueTF{#1}{\textnormal{Blc}}{\textnormal{Blc}(#1)}}
\NewDocumentCommand{\totallysubpos}{o}{\IfNoValueTF{#1}{\textnormal{TSub}}{\textnormal{TSub}(#1)}}
\def\sigmapos{P}
\def\hook{h}
\def\partconv{P}
\def\abscisse{X}
\def\ordinates{Y}
\def\poset{\abscisse\times\ordinates}
\def\leqabs{\leq_\abscisse}
\def\leqord{\leq_\ordinates}
\newcommand{\squarepos}[2]{\textnormal{Q}_{#1}^{#2}}
\def\omod{N}
\def\zeromod{\cat{0}}
\def\cut{c}
\def\ocut{d}
\newcommand{\tcut}[1][\cut]{\overline{#1}}
\newcommand{\bcut}[1][\cut]{\underline{#1}}
\newcommand{\lcut}[1][\cut]{\rule{0.4pt}{1ex}\hspace{1pt}#1}
\newcommand{\rcut}[1][\cut]{#1\hspace{1pt}\rule{0.4pt}{1ex}}
\def\rec{{R}}
\def\identity{\mathrm{Id}}
\NewDocumentCommand{\indimod}{d[] d<>}{%
  \IfNoValueTF{#1}{\def\rectemp{\rec}}{\def\rectemp{#1}}
  \IfNoValueTF{#2}{\field_{\rectemp}}{\field_{\rectemp,#2}}%
} 
\def\pyramid{\mathcal{P}}
\def\pyramidhigh{\includegraphics[align=c, scale=0.05]{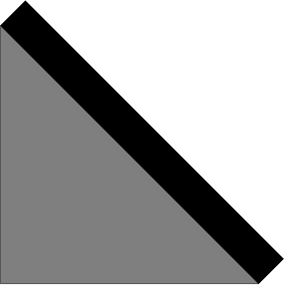}}
\def\pyramidlow{\includegraphics[align=c, scale=0.05]{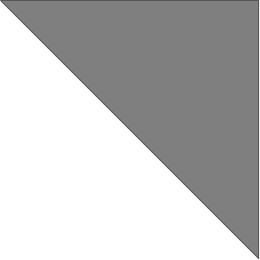}}
\NewDocumentCommand{\pyramidrecbox}{d[] m}{%
\IfNoValueTF{#1}{\includegraphics[align=c, scale=0.05]{PB#2.pdf}}%
{\includegraphics[align=c, scale=0.05]{PB#2.pdf}}}
\def\boxper{\widetilde{\basemod}}
\def\sbirth{\mathbf{bb}}
\def\sdeath{\mathbf{db}}
\def\blockcode{\mathbf{B}}
\def\maxsize{m}
\def\dual{D}
\def\letterfilt{V}
\def\letterdfilt{W}
\newcommand{\Kfilt}[3][t]{\Ker_{#2,#1}^{#3}}
\newcommand{\Ifilt}[3][t]{\Ima_{#2,#1}^{#3}}
\NewDocumentCommand{\filt}{d[] m d<>}{%
\IfNoValueTF{#1}{\def\rectemp{\rec}}{\def\rectemp{#1}}
\IfNoValueTF{#3}{\letterfilt_{\rectemp}^{#2}}{\letterfilt_{\rectemp,#3}^{#2}}%
}
\newcommand{\countingfunctor}[1][\rec]{C_{#1}}
\NewDocumentCommand{\dfilt}{d[] m d<>}{%
\IfNoValueTF{#1}{\def\rectemp{\rec}}{\def\rectemp{#1}}
\IfNoValueTF{#3}{\letterdfilt_{\rectemp}^{#2}}{\letterdfilt_{\rectemp,#3}^{#2}}%
}
\NewDocumentCommand{\filtrate}{d[] d<>}{%
  \IfNoValueTF{#1}{\def\rectemp{\rec}}{\def\rectemp{#1}}
  \IfNoValueTF{#2}{\basemod_{\rectemp}}{\basemod_{\rectemp,#2}}%
}
\def\letterdyingelts{K}
\NewDocumentCommand{\submodule}{d[] d<>}{%
  \IfNoValueTF{#1}{\def\rectemp{\rec}}{\def\rectemp{#1}}
  \IfNoValueTF{#2}{\letterdyingelts_{\rectemp}}{\letterdyingelts_{\rectemp,#2}}%
}
\newcommand{\letterlim}[1]{\mathcal{#1}}
\def\limmor{{\pi}}
\newcommand{\limfilt}[2][\rec]{\letterlim{\letterfilt}_{#1}^{#2}}
\newcommand\limrecsubmod[1][\rec]{\letterlim{\basemod}_{#1}}
\newcommand{\lifegrid}[1]{$#1$-skeleton}
\def\grid{G}
\newcommand{\gridres}[1][\grid]{\basemod^{#1}}
\newcommand{\res}[1]{\widetilde{#1}}
\def\recres{\res{\rec}}
\newcommand{\cutres}[0]{\tilde{\cut}}
\def\leftindex{-L_h}
\def\rightindex{K_h}
\def\lowerindex{-L_v}
\def\upperindex{K_v}
\def\recleftindex{-l_h}
\def\recrightindex{k_h}
\newcommand{\dart}[1]{\mathcal{D}_{#1}}
\NewDocumentCommand{\indec}{m d<>}{%
  \IfNoValueTF{#2}{\basemod^{#1}}{\basemod^{#1}_{#2}}%
}
\NewDocumentCommand{\indecgrid}{m d<>}{%
  \IfNoValueTF{#2}{\omod^{#1}}{\omod^{#1}_{#2}}%
}
\begin{document}

\title{Local characterizations for decomposability of 2-parameter persistence modules}


\author{Magnus B. Botnan \and Vadim Lebovici \and Steve Oudot}


\institute{Magnus B. Botnan \at 
            Vrije Universiteit Amsterdam \\
            Amsterdam, Netherlands\\
            \email{m.b.botnan@vu.nl}
           \and
            Vadim Lebovici \at
            Universit\'e Paris-Saclay\\
            Orsay, France\\
            \email{vadim.lebovici@ens.fr}
            \and 
            Steve Oudot \at
            Universit\'e Paris-Saclay\\
            Orsay, France\\
            \email{steve.oudot@inria.fr}
}


\maketitle

Corresponding author: Vadim Lebovici
\bigskip

\begin{abstract}
 We investigate the existence of sufficient local conditions under which poset representations decompose as direct sums of indecomposables from a given class. In our work, the indexing poset is the product of two totally ordered sets, corresponding to the setting of 2-parameter persistence in topological data analysis. Our indecomposables of interest belong to the so-called interval modules, which by definition are indicator representations of intervals in the poset. While the whole class of interval modules does not admit such a local characterization, we show that the subclass of rectangle modules does admit one and that it is, in some precise sense, the largest subclass to do so.

\keywords{Representation theory \and Topological data analysis \and Multiparameter persistence}
\end{abstract}

\tableofcontents

\section*{Conflict of interest}
The authors declare that they have no conflict of interest.

\section*{Data availability statement}
Data sharing not applicable to this article as no datasets were generated or analysed during the current study.

\section{Introduction}
\label{sec:introduction}
Recent work by Botnan and Crawley-Boevey~\cite{botnan2018decomposition} shows that pointwise finite-dimen\-sional (pfd) representations of posets over an arbitrary field~$\field$ decompose as direct sums of indecomposables with local endomorphism ring. Here we are interested in posets that are finite products of totally ordered sets~$X_1\times\cdots\times X_d$, equipped with the product order. This choice is motivated by applications in topological data analysis (TDA), where representations of such posets (typically $\R^d$) arise naturally. While a poset of this form is usually of wild representation type (at least when $d>1$), we are only interested in a subclass of its indecomposable representations, called {\em interval modules}, which by definition are indicator representations~$\field_I$ of {\em intervals} (i.e. connected convex subsets)~$I$ of $X_1\times\cdots\times X_d$. Here, connectivity and convexity are understood in the product order; see Figure~\ref{Oudotfig:interval-rep} for an example where $d=2$. 

\begin{figure}[htb]
  \centering
  \includegraphics[scale=0.5]{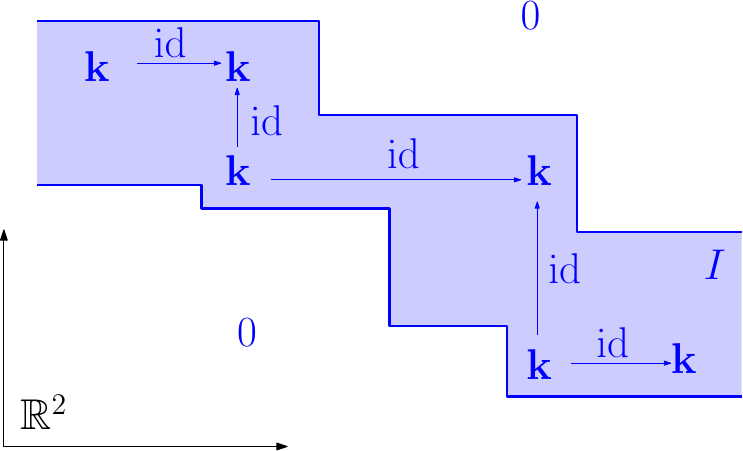}
  \caption{An interval~$I\subset\R^2$ and its associated interval module~$\field_I$.}
  \label{Oudotfig:interval-rep}
\end{figure}

These indecomposables play a key role in TDA. Indeed, given a pfd representation~$M$, the collection of the supports of the interval summands  appearing in its direct-sum decomposition can be used as a descriptor for~$M$---called its {\em barcode}---and thereby also as a descriptor for the data from which the representation originates. This descriptor is purely geometric by nature, therefore easy to interpret for practitioners, and efficient to encode and manipulate on a computer. Furthermore, its stability properties in the 1-parameter setting~\cite{edelsbrunner2008persistent,oudot2015persistence} make it a relevant choice for deriving consistent estimators in statistical analysis. 

In practice, one would like to be
able to determine whether a given representation~$M$
of~$X_1\times\cdots\times X_d$ admits interval summands in its
decomposition, and if so, whether it admits only such summands---in
which case it is called {\em interval-decomposable}.
The straightforward approach for this consists in decomposing~$M$ then checking its summands one by one.
In this paper, we advocate a different approach that consists of checking {\em local} conditions, i.e., conditions involving only restrictions of~$M$ to certain collections of subsets of~$X_1\times\cdots\times X_d$. Provided the considered subsets are small enough, the restrictions of~$M$ will have a simple structure, potentially leading to  algorithmic improvements and simplified  mathematical analyses.
%


An example of this, taken from the TDA literature, is  {\em level-set persistence}~\cite{bendich2013homology,carlsson2009zigzag}, which constructs invariants for $\R$-valued functions~$f$ on a topological space by looking at the pre-images through~$f$ of bounded open intervals of~$\R$. Once properly indexed, the homology groups of these pre-images arrange themselves into a representation of~$\R^2$ that turns out to be interval-decomposable, with summands supported on a special class of  intervals of~$\R^2$ called {\em blocks}---specifically: upper-right or lower-left quadrants, and horizontal or vertical infinite bands. Decomposability into block summands in this setting is a straightforward consequence of the Mayer-Vietoris theorem, once the following  local characterization has been established: a pfd representation~$M$ of~$\R^2$ decomposes exclusively into block summands if, and only if, all its restrictions to squares $\{x_1, x_1'\}\times \{x_2, x_2'\}\subset \R^2$ do. This fact was proven in~\cite{botnan2018decomposition,Cochoy2016}, and at the time, it provided a cleaner, and more general theory for level-set persistence than the one established previously by Bendich et al.~\cite{bendich2013homology}, which required an extra ``Morse-type'' condition on the $\R$-valued function~$f$ under consideration.

In this paper, we further generalize the theory by enlarging the class of intervals of interest to include all the axis-aligned rectangles in~$\R^2$, or more generally, all the products of 1-dimensional intervals in a product of two totally ordered sets $X_1\times X_2$ that admits a countable coinitial subset.
The indicator representations supported on rectangles are called {\em rectangle modules}, and an interval-decomposable representation whose summands are supported exclusively on rectangles is called {\em rectangle-decomposable}.
We prove (Theorem~\ref{thm:positive-rec-squares}) that a pfd representation of~$X_1\times X_2$ is rectangle-decomposable if, and only if, all its restrictions to squares $\{x_1, x_1'\}\times \{x_2, x_2'\}\subseteq X_1\times X_2$ are.
We also prove (Theorem~\ref{thm:negative-squares-hook}) that the rectangle modules are, in some precise sense, the largest subclass of interval modules that can be characterized locally, at least as far as restrictions to squares are concerned. These results generalize previous results  obtained under the constraint that the sets~$X_1, X_2$ be finite~\cite{botnan2020rectangledecomposable}---this constraint is lifted here and replaced by a much milder assumption.

The interest in rectangle modules is currently ramping up  among the TDA community, with the realization of their simplicity of use and potential for generalization. Indeed, while they constitute only a small fraction of the indecomposables over~$\R^d$, they can serve as a basis for encoding certain invariants of more general classes of representations. For instance, it was proven in~\cite{botnan2021signed} that the so-called rank invariant of any finitely presented representation~$M$ of $\R^d$, i.e., the collection of the ranks of all the internal morphisms of~$M$, decomposes essentially uniquely as the difference between the rank invariants of two rectangle-decomposable representations. Rectangle modules---or a slight generalization thereof---also appear in projective resolutions in certain exact structures~\cite{blanchette2021homological,botnan2021signed}. These facts give rise to a notion of signed barcode for general classes of representations of~$\R^d$.  While our work is not directly related to these recent developments, it contributes to the background knowledge on rectangle modules, and it allows us to answer practical questions such as determining whether a representation itself---not just its rank invariant---decomposes into rectangle summands.



The next section states our results formally, reviews the state of the art, and  provides  a brief outline of the paper.

\section{Main question and results}
\label{sec:main_results}
\subsection{Preliminaries}\label{sec:context}

Our exposition uses the language of topological data analysis to talk about representations. Here we spend a few paragraphs defining our terms. 

\subsubsection{Persistence modules}
A \emph{persistence module} over a poset $(P,\pcleq)$ is a functor $\basemod : (P,\pcleq) \rightarrow \cat{Vec}$ where $\cat{Vec}$ denotes the category of vector spaces over a fixed field~$\field$. Morphisms between persistence modules are natural transformations between functors. 

As $\cat{Vec}$ is an abelian category, so is the category $\cat{Per}(P,\pcleq) := \Fun((P,\pcleq),\cat{Vec})$ of persistence modules over $(P,\pcleq)$. More precisely, kernels, cokernels and images, as well as products, direct sums and quotients of persistence modules are defined pointwise at each index $p\in P$, and the category of persistence modules admits a zero object $\zeromod$, the persistence module whose spaces and internal morphisms are all equal to $0$.

For every $p\in P$, the vector space $\basemod(p)$ is called \emph{internal space of $M$ at $p$} and denoted for short by $\basemod_p$. For every $p\pcleq q$, the linear map of $\basemod(p\pcleq q) : \basemod_p \to \basemod_q$ is called \emph{internal morphism of $\basemod$ between $p$ and $q$} and denoted for short by $\mor_p^q$. We say that $\basemod$ is \emph{pointwise finite-dimensional} (pfd) if for every $p\in P$, $\basemod_p$ is finite-dimensional. The {\em support} of~$\basemod$ is the set of indices~$p\in P$ for which $\basemod_p\neq 0$. We say that a family $\{\basemod_\alpha\}_{\alpha\in A}$ of persistence modules over $P$ is \emph{locally finite} if the set $\{\alpha \in A \,|\, p\in \supp{\basemod_\alpha}\}$ is finite for each $p\in P$. A \emph{locally finite direct sum} is the direct sum of a locally finite family of persistence modules. 

A morphism $f : \basemod\to\omod$ between two persistence modules over $P$ is a \emph{monomorphism} (resp. \emph{epimorphism}) if for every $p\in P$, $f_p : \basemod_p \to \omod_p$ is injective (resp. surjective). A morphism between two persistence modules is an \emph{isomorphism} if is it both a monomorphism and an epimorphism. A \emph{submodule} of $\basemod$ is a persistence module $\omod$ together with a monomorphism $\omod \to \basemod$ of persistence modules, often denoted by $\omod \hookrightarrow \basemod$.

\subsubsection{Decomposability}\label{sec:decomposability}
A persistence module over $(P,\pcleq)$ is said to be \emph{decomposable} if it decomposes as direct sum of at least two nontrivial persistence modules. Otherwise, it is said to be \emph{indecomposable}. The endomorphism ring $\End(\basemod) := \Hom(\basemod, \basemod)$ is \emph{local} if $\theta$ or $\identity_\basemod-\theta$ is invertible for all $\theta \in \End(\basemod)$. It is easy to see that if $\basemod$ has a non-trivial decomposition then $\End(\basemod)$ is not local. The pfd persistence modules over $(P,\pcleq)$ form a Krull-Schmidt subcategory of~$\cat{Per}(P,\pcleq)$:
\begin{thm}[\cite{botnan2018decomposition}]
    \label{thm:decomposition-general}
    Every pfd persistence module~$\basemod$ over~$(P,\pcleq)$ decomposes as a direct sum of indecomposable modules with local endomorphism rings. By Azumaya's theorem~\cite{Azumaya1950} this decomposition is unique up to isomorphism.
\end{thm}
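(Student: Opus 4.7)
The plan is to leverage pointwise finite-dimensionality via a global version of Fitting's lemma, using it first to characterize indecomposability in terms of the endomorphism ring, and then to invoke Zorn's lemma to exhaust $\basemod$ into indecomposable summands. For any $\theta \in \End(\basemod)$, I would first introduce the subfunctors
\[ \basemod^\infty_\theta := \bigcap_{n \geq 1} \Ima(\theta^n), \qquad \basemod^0_\theta := \bigcup_{n \geq 1} \Ker(\theta^n). \]
At each point $p \in P$, the classical Fitting lemma applied to the linear endomorphism $\theta_p$ on the finite-dimensional space $\basemod_p$ yields $\basemod_p = (\basemod^\infty_\theta)_p \oplus (\basemod^0_\theta)_p$, with $\theta_p$ restricting respectively to an automorphism on the first summand and to a nilpotent endomorphism on the second. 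Since sums and intersections of subfunctors are computed pointwise, these glue into a genuine direct sum decomposition $\basemod = \basemod^\infty_\theta \oplus \basemod^0_\theta$ of persistence modules.

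This Fitting decomposition immediately yields the key equivalence: $\basemod$ is indecomposable if and only if $\End(\basemod)$ is local. The nontrivial direction proceeds by contraposition: if $\End(\basemod)$ is not local, then some $\theta \in \End(\basemod)$ has neither $\theta$ nor $\identity - \theta$ invertible, which forces both Fitting summands $\basemod^\infty_\theta$ and $\basemod^0_\theta$ to be nonzero, producing a nontrivial decomposition of $\basemod$. The converse is the elementary observation already noted in the text.

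The remaining task is to actually reach a decomposition into indecomposable summands. I would apply Zorn's lemma to the poset of direct sum decompositions of $\basemod$, ordered by refinement. The main obstacle lies precisely here: verifying that every chain of refinements admits an upper bound. The pfd hypothesis is essential, because at each $p \in P$ only finitely many summands of any decomposition can be nonzero, so the ``limit'' of an ascending chain of refinements assembles into a well-defined direct sum decomposition of $\basemod$. A maximal element of this poset then consists of indecomposable summands, each with local endomorphism ring by the Fitting criterion above, and Azumaya's theorem delivers essential uniqueness.
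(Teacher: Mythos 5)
The paper states this theorem as a citation to \cite{botnan2018decomposition} and does not reprove it, so strictly speaking there is no ``paper's own proof'' to compare against. The route taken in the cited reference is, however, quite different from yours: Botnan and Crawley-Boevey reduce the statement to the theory of $\Sigma$-pure-injective (equivalently, $\Sigma$-algebraically-compact) modules, verifying the relevant descending chain condition for pfd representations and then invoking a general decomposition theorem from that theory. Your proposal is an attempt at a more elementary, self-contained argument.

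Your Fitting step is correct. The subfunctors $\basemod^\infty_\theta$ and $\basemod^0_\theta$ are genuine submodules (directed unions and intersections of subfunctors are computed pointwise), pointwise Fitting gives $\basemod = \basemod^\infty_\theta \oplus \basemod^0_\theta$, $\theta$ restricts to a pointwise (hence global) automorphism of the first summand, and it is pointwise nilpotent on the second, so $\identity - \theta$ is a pointwise (hence global) automorphism there. This correctly establishes that a pfd persistence module is indecomposable if and only if $\End(\basemod)$ is local.

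The Zorn step, by contrast, is where the real content of the theorem lives, and as written it is a gap. The assertion that an ascending chain $(\mathcal{D}_\alpha)$ of refinements has an upper bound requires an actual construction and verification, and ``only finitely many summands are nonzero at $p$'' is not by itself a justification. The natural candidate upper bound takes, for each maximal nested thread of summands $N^{(\alpha)} \in \mathcal{D}_\alpha$, the intersection $\bigcap_\alpha N^{(\alpha)}$, and one then has to check that these intersections are in direct sum, that they sum to $\basemod$, and that the resulting family refines every $\mathcal{D}_\alpha$. What actually makes this work is a pointwise \emph{stabilization} argument: at each $p$ the number of summands of $\mathcal{D}_\alpha$ that are nonzero at $p$ is nondecreasing in $\alpha$ and bounded by $\dim \basemod_p$, so it stabilizes at some stage, and beyond that stage the $p$-components of the surviving summands no longer change; only then do the thread intersections land in the right places. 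None of this is in your write-up, yet it is precisely the step that consumes the pfd hypothesis and fails without it --- for a general (non-pfd) module, chains of refinements need not have upper bounds and decomposition into indecomposables can fail outright. If you flesh this step out, you do obtain a legitimate, more elementary proof than the one via $\Sigma$-pure-injectivity; as written, the crucial step is asserted rather than proved.
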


\subsubsection{Product posets}
\label{sec:product_posets}

In the paper we focus on persistence modules over product posets. Given two totally ordered sets $(\abscisse,\leqabs)$ and $(\ordinates,\leqord)$, their {\em product} $(\poset,\leq)$ is the Cartesian product $\poset$ equipped with the \emph{product order} $\leq$ defined by
\begin{equation}
    \forall s,t\in\poset,\, s \leq t \Longleftrightarrow s_x \leqabs t_x \text{ and } s_y \leqord t_y,
\end{equation}
where the coordinates of a point $u\in\poset$ are denoted by $(u_x,u_y)$. Henceforth we use the notation $\poset$ instead of $(\poset,\leq)$ as the only order considered on the Cartesian product $\poset$ will be the product order. 

\begin{conv}
    From now on and until the end of the paper, we fix two totally ordered sets $(\abscisse,\leqabs)$ and $(\ordinates,\leqord)$ and consider their product $(\poset,\leq)$.
\end{conv}

A persistence module~$M$ over $\poset$ is called a \emph{2-parameter persistence module}, or a \emph{persistence bimodule} for short. Any two comparable points $s\leq t$ in $\poset$ yield a square $\squarepos{s}{t}:=\{s,(s_x,t_y),(t_x,s_y),t\}$ and an associated commutative diagram
%
\begin{linenomath*}
    \begin{equation}\label{eq:commuting_square}
    \squarediag{\basemod_s}{\basemod_{(t_x,s_y)}}{\basemod_{(s_x,t_y)}}{\basemod_t}{\mor_s^{(t_x,s_y)}}{\mor_s^{(s_x,t_y)}}{\mor_{(s_x,t_y)}^t}{\mor_{(t_x,s_y)}^t}
    \end{equation}
\end{linenomath*}
%
%
\subsubsection{Intervals, rectangles, blocks}\label{sec:int-rec-blc}
Let $P$ be a poset. We say that $S\subseteq P$ is {\em convex} if, for every $p\pcleq q\in S$, we have $s\in S$ for all $s\in P$ such that $p\pcleq s \pcleq q$. A convex set~$S$ is an {\em interval} if it is also {\em connected}, i.e. for every $p,q\in S$ there is a finite sequence $(p_i)_{i\in\libr 0,n\ribr}$ of points of $S$ such that $p = p_0 \perp \dots \perp p_n = q$, where $\perp$ is the binary relation defined by $p \perp q$ if and only if $p$ and $q$ are comparable ($p \pcleq q$ or $q \pcleq p$). We write $\convexes(P)$ (resp. $\intervals[P]$) for the set of convex (resp. interval) subsets of~$P$.  

To any convex set~$S\subseteq P$  we associate a persistence module $\indimod[S]$,
called the \emph{indicator module} of~$S$, defined as follows:
\begin{equation}
    \indimod[S](p) =
    \begin{cases}
        \field &\mbox{ if } p \in S, \\
        0 &\mbox{ else},
    \end{cases}
\end{equation}
and
\begin{equation}
    \indimod[S](p\leq q) =
    \begin{cases}
        \identity_\field &\mbox{ if } p \mbox{ and } q \in S, \\
        0 &\mbox{ else},
    \end{cases}
\end{equation}
and by convention we set $\indimod[\emptyset]= \zeromod$. When~$S$ is an interval, $\indimod[S]$ is called an {\em interval module}, and it is indecomposable because its endomorphism ring~$\End(\indimod[S])$ is isomorphic to $\field$ (by connectivity of~$S$) and thus local. Otherwise, $\indimod[S]$ decomposes as the direct sum of the indicator modules of its connected components. If $\basemod$ is isomorphic to a direct sum of interval modules, then we say that $\basemod$ is \emph{interval-decomposable}. 

We call any product $I\times J$ of two intervals $I\subseteq X$ and $J\subseteq Y$ a \emph{rectangle}, and we write $\rectangles[\poset]$ for the set of rectangles of $\poset$.
Note that rectangles are intervals by definition, and their associated interval modules are called {\em rectangle modules}.

A \emph{block} is any rectangle $B = I\times J$ that satisfies either of the following (non-exclusive) conditions:
\begin{itemize}
    \item $I$ is cofinal\footnote{Given a poset $(P,\pcleq)$, a subset $Q\subseteq P$ is \emph{coinitial} if every $p\in P$ admits a $q\in Q$ such that $q \pcleq p$, and \emph{cofinal} if every $p\in P$ admits a $q\in Q$ such that $q\pcgeq p$.} in $X$ and $J$ is cofinal in $Y$---$B$ is then called a \emph{birth quadrant};
    \item $I$ is coinitial in $X$ and $J$ is coinitial in $Y$---$B$ is then called a \emph{death quadrant};
    \item $I$ is both coinitial and cofinal in $X$---$B$ is then called a \emph{horizontal band};
    \item $J$ is both coinitial and cofinal in $Y$---$B$ is then called a \emph{vertical band}.
\end{itemize}
We write $\blocks[\poset]$ for the set of blocks of $\poset$.
Blocks are rectangles by definition, and their associated rectangle modules are called \emph{block modules}. 

\begin{ex}
    Let $Q$ be a square of $\poset$ represented by the following diagram.
    \begin{equation*}
        \begin{tikzcd}
            \bullet \arrow[r]           & \bullet           \\
            \bullet \arrow[u] \arrow[r] & \bullet \arrow[u]
        \end{tikzcd}
    \end{equation*}
    $\blocks[Q]$ is the collection of the following subposets of~$Q$:
    \begin{equation*}
        \begin{tikzcd}
            \circ \arrow[r]           & \circ           \\
            \circ \arrow[u] \arrow[r] & \circ \arrow[u]
        \end{tikzcd},\qquad  
        \begin{tikzcd}
            \bullet \arrow[r]           & \bullet           \\
            \bullet \arrow[u] \arrow[r] & \bullet \arrow[u]
        \end{tikzcd}, \qquad
        \begin{tikzcd}
            \bullet \arrow[r]         & \bullet         \\
            \circ \arrow[u] \arrow[r] & \circ \arrow[u]
        \end{tikzcd}, \qquad
        \begin{tikzcd}
            \circ \arrow[r]           & \bullet           \\
            \circ \arrow[u] \arrow[r] & \bullet \arrow[u]
        \end{tikzcd},
    \end{equation*}
    \begin{equation*}
        \begin{tikzcd}
            \circ \arrow[r]           & \bullet           \\
            \circ \arrow[u] \arrow[r] & \circ \arrow[u]
        \end{tikzcd}, \qquad
        \begin{tikzcd}
            \circ \arrow[r]           & \circ           \\
            \bullet \arrow[u] \arrow[r] & \circ \arrow[u]
        \end{tikzcd}, \qquad
        \begin{tikzcd}
            \bullet \arrow[r]           & \circ           \\
            \bullet \arrow[u] \arrow[r] & \circ \arrow[u]
        \end{tikzcd}, \qquad
        \begin{tikzcd}
            \circ \arrow[r]           & \circ           \\
            \bullet \arrow[u] \arrow[r] & \bullet \arrow[u]
        \end{tikzcd},
    \end{equation*}
    where $\bullet$ represents a point that belongs to the corresponding block and $\circ$ represents a point that does not. The collection $\rectangles[Q]$ consists of the above subposets and the following two ''corners'': 
    \begin{equation*}
        \begin{tikzcd}
            \bullet \arrow[r]           & \circ           \\
            \circ \arrow[u] \arrow[r] & \circ \arrow[u]
        \end{tikzcd}, \qquad
        \begin{tikzcd}
            \circ \arrow[r]           & \circ           \\
            \circ \arrow[u] \arrow[r] & \bullet \arrow[u]
        \end{tikzcd}.
    \end{equation*}
\end{ex}

\begin{defi}A persistence bimodule~$\basemod$ is said to be
\emph{rectangle-decomposable} (\emph{block-decomposable}) if it decomposes into a direct sum of interval modules supported on rectangles (blocks). 
%
\end{defi}

\subsection{Local characterization}\label{sec:main_que}
Our aim is to work out a local condition that characterizes the decomposability of pfd persistence bimodules over a  given class of interval modules. We specify this class of interval modules via the set $\indecsupports\subseteq \intervals[\poset]$ of their supports, and we write $\decclass{\indecsupports}$ for  the set of all pfd persistence bimodules that are obtained (up to isomorphism) as direct sums of such interval modules:\begin{equation*}
  \decclass{\indecsupports} := \left\{\basemod\in \cat{Per}(\poset) \mid \basemod\simeq\bigoplus_{S\in\indecsupports}\indimod[S]^{m_{S}}\ \mbox{where}\ 
\sum_{S\ni t}m_S <\infty  \ \mbox{for all}\ t\in \poset\right\}.
\end{equation*}
Note that the class $\decclass{\indecsupports}$ is still well-defined for $\indecsupports\subseteq\convexes(\poset)$. In that case, however, $\field_S$ need not be indecomposable.

\begin{figure}[t]
    \centering
    \includegraphics[scale=0.5]{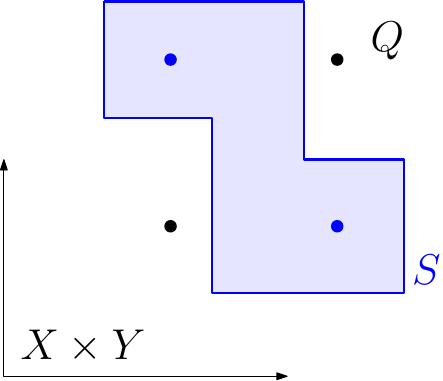}
    \caption{An example of $S\in\indecsupports$ (the solid blue polygon) and $Q\in\testsubsets$ (the four dots arranged in a square) such that $S\cap Q$ is convex but not connected in $Q$.}
    \label{fig:remark-conv-not-connected}
  \end{figure}

Locality is understood as taking restrictions to a collection~$\testsubsets$ of strict subsets of~$\poset$, called the \emph{test subsets}. Given $Q\in\testsubsets$, let $\indecsupports_{|Q}:=\{S\cap Q\,|\,S\in\indecsupports\}$ be the set of intervals in~$\indecsupports$ restricted to~$Q$. Note that $\indecsupports_{|Q}\subseteq \convexes(Q)$, since convexity is preserved under taking restrictions. However, it may not be the case that $\indecsupports_{|Q}\subseteq \intervals[Q]$, since connectivity is not always preserved under taking restrictions; see \Cref{fig:remark-conv-not-connected} for an example. Nevertheless, we still have\footnote{This is easily proven. Given an interval $S\subseteq \poset$, the restriction $S_{|Q}$ is convex in~$Q$ therefore $\field_S$ decomposes as the direct sum of the indicator modules of its connected components in~$Q$. Conversely, given an interval~$S'$ of~$Q$, the set $S=\{s\in\poset \mid p \pcleq s \pcleq q\ \mbox{for some}\ p,q\in S'\}$ is an interval of~$\poset$ that restricts to~$S'$ on $Q$.}:
\begin{equation*}
  \decclass{\intervals[\poset]_{|Q}} = \decclass{\intervals[Q]}.
\end{equation*}
While intervals of~$\poset$ may not restrict to intervals on product subsets (see again \Cref{fig:remark-conv-not-connected}), rectangles and blocks do restrict to rectangles and blocks respectively. This means that we for any $Q\in\fullsubpos[\poset]$ have:
\begin{equation*}
    \begin{split}
    \rectangles[\poset]_{|Q} &= \rectangles[Q], \\
    \blocks[\poset]_{|Q} &= \blocks[Q].
    \end{split}
\end{equation*}
%
%
\subsubsection{Problem formulation}
	Let $\indecsupports$ be a collection of intervals in $X\times Y$, and let $\testsubsets$ be a collection of test subsets. Since restriction preserves interval-decomposability, we see that if $M\in \decclass{\indecsupports}$, then $M\in \decclass{\indecsupports_{|Q}}$ for every $Q\in \testsubsets$. Symbolically, 
	\[
	        \basemod \in \decclass{\indecsupports} \Longrightarrow \forall Q\in\testsubsets,\,\basemod_{|Q} \in \decclass{\indecsupports_{|Q}}.
	        \]
In this paper we are concerned with the reverse implication. 
\begin{mainque*}
	Characterize the collections $\indecsupports$ and $\testsubsets$ such that
		\[
	        \basemod \in \decclass{\indecsupports} \Longleftarrow \forall Q\in\testsubsets,\,\basemod_{|Q} \in \decclass{\indecsupports_{|Q}}.
	        \]
\end{mainque*}
Throughout the paper we focus on the setting where the test subsets $\testsubsets$ are of the form $X'\times Y'\subseteq X\times Y$ for $X'\subseteq X$ and $Y'\subseteq Y$, and we denote by~$\fullsubpos[\poset]$ the set of such subsets.  Observe that the main question is trivial if $X\times Y$ is a member of $\testsubsets$. 

Focusing on $\fullsubpos[\poset]$ does not provide a complete picture, and there are several natural next steps; see \Cref{sec:conclusion} for a discussion on this.
%
%
%
%

\subsection{State of the art}
\label{sec:state-of-the-art}

We now review positive and negative results in the setting of our main question. To this end we need the following notation. Let $\subgrids_m(\poset)$ denotes the collection of all finite product subsets of $\poset$ of size $m\times m$. For example, the elements of~$\squares[\poset]$ are the squares, i.e., the $2\times 2$ grids embedded in $\poset$. 
\subsubsection{Positive results: $\indecsupports = \blocks[\poset]$ and $\indecsupports = \rectangles[\poset]$}
A local characterization of block-decomposable pfd bimodules was given in~\cite{Cochoy2016} for $\poset=\R^2$, and later extended to products of two totally ordered sets in~\cite{botnan2018decomposition}. 
\begin{thm}[\cite{Cochoy2016,botnan2018decomposition}]
    \label{thm:blc-decomposition}
    Let $X$ and $Y$ be totally ordered sets, and $M$ a pfd module over $\poset$. Then, $M$ is block-decomposable if and only if the restriction of $M$ to an arbitrary $2\times 2$ grid is block-decomposable. Symbolically, 
    \begin{equation*}
        \basemod\in\decclass{\blocks[\poset]} \Longleftrightarrow \forall Q\in \squares[\poset], \basemod_{|Q}\in \decclass{\blocks[Q]}.
    \end{equation*}
\end{thm}
%

For $X$ and $Y$ finite sets, the previous theorem generalizes to rectangles. 
%
%
\begin{thm}[\cite{botnan2020rectangledecomposable}]
    \label{thm:finite-case}
 Let $X$ and $Y$ be finite totally ordered sets, and $M$ a pfd module over $\poset$. Then, $M$ is rectangle-decomposable if and only if the restriction of $M$ to an arbitrary $2\times 2$ grid is rectangle-decomposable. Symbolically,  
    \begin{equation*}
        \basemod\in\decclass{\rectangles[\poset]} \Longleftrightarrow \forall Q\in \squares[\poset], \basemod_{|Q}\in \decclass{\rectangles[Q]}.
    \end{equation*}
\end{thm}
%

\subsubsection{Negative result: $\indecsupports = \intervals[\poset]$}
It was shown in~\cite{botnan2020rectangledecomposable} that when $\poset$ is finite, there is no local characterization of interval-decomposable modules. 
%
%
\begin{thm}[\cite{botnan2020rectangledecomposable}]
    \label{thm:negative-int-subgrids_finite}
    Suppose $X$ and $Y$ are finite with  $|X| \geq 3$ and $|Y| \geq 3$. Then, there exists a pfd persistence module $\basemod$ over $\poset$ that is not interval-decomposable, but for which $M|_Q$ is interval-decomposable for all $Q=X'\times Y'\subsetneq X\times Y$. 
    
\end{thm}
%

\subsubsection{Negative result: $\rectangles[\poset] \subsetneq \indecsupports \subsetneq \intervals[\poset]$}
In light of \Cref{thm:negative-int-subgrids_finite} it is natural to wonder if there is a class of intervals more general than rectangles for which a local characterization over $2\times 2$ grids is possible. That turns out not to be the case. 

%
\begin{thm}[\cite{botnan2020rectangledecomposable}]
    \label{thm:negative-squares-hook_finite}
    Suppose $X$ and $Y$ are finite with $|X| \geq 2$, $|Y| \geq 2$, where $(|X|, |Y|) \neq (2, 2)$, and let $\indecsupports \subseteq \intervals[\poset]$ be such that $\indecsupports_{|Q} \supsetneq \rectangles[Q]$ for all $2\times 2$ grids $Q$. Then, there exists a pfd persistence module $M$ over $\poset$ such that $M$ is not in $\decclass{\indecsupports}$, but the restriction of $M$ to $Q$ is in $\decclass{\indecsupports_{|Q}}$ for all $2\times 2$ grids $Q$. 
    
\end{thm}
\subsection{Our results}
The main result of this paper is a generalization of \Cref{thm:finite-case} to the product of two totally satisfying a weak condition; we conjecture that the result holds for arbitrary totally ordered sets but proving that seems to require a novel set of ideas. 

%
\begin{thm}
    \label{thm:positive-rec-squares}
    Suppose that any interval of $X$ or $Y$ admits a countable coinitial subset. Then, any pfd persistence module~$\basemod$ over $\poset$ is rectangle-decomposable if and only if the restriction of $M$ to any $2\times 2$ grid is rectangle-decomposable. Symbolically, 
    \begin{equation*}
        \basemod\in\decclass{\rectangles[\poset]} \Longleftrightarrow \forall Q\in \squares[\poset], \basemod_{|Q}\in \decclass{\rectangles[Q]}.
    \end{equation*}
\end{thm}
Note that since rectangles contain blocks, this theorem also generalizes \Cref{thm:blc-decomposition}.

The assumption on the posets is fairly mild. For instance, it is satisfied by arbitrary subsets  $X,Y$ of~$\R$ endowed with the canonical order. Furthermore, an equivalent formulation of the assumption is that~$X$ and~$Y$ both admit a countable subset which is dense in their order topology, as it is done in \cite{Crawley-Boevey2012}. It can also easily be seen to be equivalent to the hypothesis that any rectangle in $\poset$ admits a countable coinitial subset, which is instrumental when considering inverse limits of exact sequences; see \Cref{lem:iso-limfilt-filt}. 

As in the finite setting~\cite{botnan2020rectangledecomposable}, our analysis uses the following local characterization for rectangle-decomposability.
\begin{defi}[Weak exactness]\label{def:weak-exactness}
  A persistence module $\basemod$ over $\poset$ is \emph{weakly exact} if, for every $s\leq t\in \poset$, the following conditions hold in the commutative diagram~\eqref{eq:commuting_square}:
  \begin{equation*}
    \begin{split}
        \Ima\mor_s^t &= \Ima\mor_{(t_x,s_y)}^t \cap \Ima\mor_{(s_x,t_y)}^t, \\
        \Ker\mor_s^t &= \Ker\mor_s^{(t_x,s_y)} + \Ker\mor_s^{(s_x,t_y)}.
    \end{split}
    \end{equation*}
\end{defi}
This condition is a weakened version of the so-called {\em strong exactness} condition  that was proven to be equivalent to block-decomposability in~\cite{Cochoy2016}.  Furthermore, it is not hard to check that a pfd persistence module over a $2\times 2$ grid is weakly exact if and only if it is rectangle-decomposable. With this observation in mind, we see how the following theorem is equivalent to \Cref{thm:positive-rec-squares}. 

%
\begin{thm}\label{thm:rec-dec-weak-exact}
    Suppose that any interval of $X$ or $Y$ admits a countable coinitial subset. Then, a pfd persistence module~$\basemod$ over~$\poset$ is weakly exact if, and only if, $\basemod$ is rectangle-decomposable.  
\end{thm}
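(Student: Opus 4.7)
The forward direction (rectangle-decomposable $\Rightarrow$ weakly exact) is routine: both conditions in \Cref{def:weak-exactness} are preserved under arbitrary direct sums, so it suffices to verify weak exactness for a single rectangle module $\indimod[R]$ with $R = I\times J$. A short case analysis on which of the four vertices of the square $\squarepos{s}{t}$ lies in $R$ handles every configuration, using the key observation that $s, t \in R$ forces $(s_x,t_y), (t_x, s_y) \in R$ as well (so all four transition maps become identities, making both identities trivial), while in all other configurations both identities reduce to $0=0$.

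For the converse, the base case is a single square $Q \in \squares[P]$. There the commuting $2\times 2$ quiver is of finite representation type, and its indecomposable representations are exactly the $11$ indicator modules of non-empty intervals of $Q$. Nine of these are rectangle modules; the remaining two are the ``triangles'' supported on $\{s,(t_x,s_y),(s_x,t_y)\}$ and $\{(t_x,s_y),(s_x,t_y),t\}$. A direct computation shows that the first violates the kernel identity and the second violates the image identity of weak exactness. Since weak exactness descends to direct summands, a weakly exact pfd module on $Q$ cannot contain a triangle summand in its Krull-Schmidt decomposition and is therefore rectangle-decomposable.

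To lift this to an arbitrary $P$, we appeal to \Cref{thm:decomposition-general}: $M$ decomposes as a direct sum $\bigoplus_i N_i$ of indecomposable pfd modules with local endomorphism rings, and each $N_i$ inherits weak exactness. It then suffices to prove that every indecomposable pfd weakly exact module $N$ on $P$ is isomorphic to a rectangle module $\indimod[R]$. The plan is two-fold: first show that the support $\mathrm{supp}(N)$ is a rectangle, then show that within this rectangle all non-zero transition maps of $N$ are isomorphisms, so that $N \simeq \indimod[\mathrm{supp}(N)]$. For the first part, indecomposability forces the support to be an interval, and the image-intersection condition of weak exactness, applied to squares straddling $\mathrm{supp}(N)$, propagates non-zero vectors so as to fill in the convex hull of the support in each coordinate. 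For the second part, the image and kernel conditions together with local endomorphism rings force any non-zero transition map to be a scalar isomorphism.

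The main obstacle is controlling the ``propagation and filling'' argument in the infinite setting, which is an iterative construction that need not terminate in general; this is precisely where the countable coinitial hypothesis is essential. It guarantees that the inverse system of restrictions of $N$ to finite product subsets has a countable cofinal subsystem, so that Mittag-Leffler-type arguments ensure vanishing of the derived inverse limit $\varprojlim^1$ of the relevant systems of kernels and images. This preserves exactness in the limit and lets us assemble the local rectangle decompositions of the $N_{|Q}$, guaranteed on every finite product subset $Q$ by the square case and \Cref{thm:finite-case}, into the required global rectangle structure for $N$.
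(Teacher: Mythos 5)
Your forward direction is fine and matches the paper's remark in \Cref{sec:proof-overview}, and your square-level analysis is the content of the cited finite case \Cref{thm:finite-case}. But the converse on an infinite $\poset$ has a genuine gap, and it sits exactly where your plan becomes a sketch.

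You propose to reduce to indecomposables via \Cref{thm:decomposition-general} and then argue (a) the support of an indecomposable weakly exact $N$ is a rectangle via ``propagation and filling,'' and (b) all transition maps inside the support are isomorphisms. Neither step is worked out, and neither is routine. For (a), indecomposability gives connectivity of the support, but not convexity or rectangularity; weak exactness is a condition on commutative squares, not a pointwise condition, so there is no obvious way to ``fill in'' the support to a rectangle without already knowing something like a decomposition. For (b), you invoke local endomorphism rings, but locality of $\End(N)$ does not immediately force pointwise $1$-dimensionality or that the restricted transition maps are scalars; that conclusion is essentially equivalent to the theorem itself. Your appeal to Mittag--Leffler and the countable coinitial hypothesis is on the right track (this is indeed how the paper controls inverse limits of exact sequences in \Cref{prop:recsubmod}), but you then say this ``lets us assemble the local rectangle decompositions,'' which is precisely the hard part that you leave unexplained: local rectangle decompositions on different product subsets are not unique and do not glue without a canonical, functorial choice. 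The paper addresses exactly this by constructing the functorial filtration $\filt{\pm}$, the counting functor $\countingfunctor$, and the rectangle filtrates $\filtrate$ (Sections \ref{sec:functorial-filtration}--\ref{sec:filtrates}), then proving these are in direct sum (\Cref{sec:directsum}) and cover $\basemod$ (\Cref{sec:covering}, where \Cref{thm:finite-case} is indeed invoked, but only after passing to a carefully chosen $t$-skeleton and showing via \Cref{lem:unique-rec} that its rectangles inject into rectangles of $\poset$ with the same filtrate dimensions). None of this machinery is present in your sketch, and without it the argument does not close. Note also that the paper's strategy bypasses the reduction to indecomposables altogether: it never needs to prove directly that an indecomposable weakly exact module is a rectangle module; that statement falls out only \emph{after} the filtrate construction is complete.
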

%

\medskip
We also consider extensions of \Cref{thm:negative-int-subgrids_finite} and \Cref{thm:negative-squares-hook_finite}. 
%
\begin{thm}[extends \Cref{thm:negative-int-subgrids_finite}]
    \label{thm:negative-int-subgrids}
    Suppose $X$ and $Y$ are totally ordered sets with $|X| \geq 3$ and $|Y| \geq 3$, and let $2 \leq m < \min(|X|,|Y|)$ be an integer. Then, there exists a pfd persistence module $\basemod$ over $\poset$ that is not interval-decomposable, but for which $M|_Q$ is interval-decomposable for all grids $Q$ of side-lengths at most $m$. 
    
       
\end{thm}
\begin{thm}[extends \Cref{thm:negative-squares-hook_finite}]
    \label{thm:negative-squares-hook}
    Suppose $X$ and $Y$ are totally ordered sets with $|X| \geq 2$, $|Y| \geq 2$, and $(|X|, |Y|) \neq (2, 2)$, and that $\indecsupports \subseteq \intervals[\poset]$ is such that $\indecsupports_{|Q} \supsetneq \rectangles[Q]$ for all $2\times 2$ grids $Q$. Then, there exists a pfd persistence module $M$ over $\poset$ such that $M$ is not in $\decclass{\indecsupports}$, but the restriction of $M$ to $Q$ is in $\decclass{\indecsupports_{|Q}}$ for all $2\times 2$ grids $Q$.    
    
\end{thm}
%

The proofs of these results adapt and extend the scheme used in the finite setting~\cite{botnan2020rectangledecomposable}, making use of embeddings of a certain indecomposable representation of the quiver $\dart{n}$. While the proof of \Cref{thm:negative-int-subgrids} is a straightforward adaptation of \Cref{thm:negative-int-subgrids_finite}, the proof of \Cref{thm:negative-squares-hook} is more involved. 

\medskip

As an application of \Cref{thm:rec-dec-weak-exact} we prove a ``continuous'' version of the so-called \emph{pyramid basis theorem} from level-sets persistence~\cite{bendich2013homology}. Here continuous refers to the fact that the persistence modules need not be completely determined by the restriction to a finite set of indices.

\subsection{Paper outline}\label{sec:paper-outline}

The proof of~\Cref{thm:rec-dec-weak-exact} is developed in Sections~\ref{sec:functorial-filtration} through~\ref{sec:covering}. Since it is easy to verify that rectangle modules are weakly exact and that being weakly exact is invariant under taking locally finite direct sums, one can easily see that any pfd persistence module that is rectangle-decomposable is also weakly exact. We are left with proving the converse statement.

For that, we follow the same approach as in~\cite{Cochoy2016} using the so-called \emph{functorial filtrations}, with some major adjustments at key steps due to the weaker notion of exactness used in our local condition (\Cref{def:weak-exactness}). Consequently, our approach in this paper is entirely different from the simple rank-based approach we followed in the finite setting~\cite{botnan2020rectangledecomposable}. 

Summarized, we start by defining for each rectangle $\rec\in\poset$, a submodule $\filtrate$ of $\basemod$ called the \emph{rectangle filtrate of $\basemod$ associated to $\rec$}. This submodule is constructed such that ~$\filtrate<t>$ contains precisely the elements of~$\basemod_t$ whose ``lifespan'' is exactly~$\rec$. In particular, $\filtrate$ is isomorphic to a finite direct sum of copies of $\indimod[\rec]$. We then prove that the sum of these filtrates is an internal direct sum of $\basemod$. The proof concludes by showing that the resulting internal direct sum generates $\basemod$. 
\medskip

Our proofs of Theorems~\ref{thm:negative-int-subgrids} and~\ref{thm:negative-squares-hook} are given in \Cref{sec:negative-answers}.
\medskip

In section \Cref{sec:example}, we discuss our results in the context of TDA. 

\subsection{Comparison to the work on block-decomposable modules}
We now compare our approach to the one in the block-decomposable case \cite{Cochoy2016}. The functorial filtration technique for weakly exact persistence bimodules already appeared in \cite[Secs.~3,4]{Cochoy2016}. However, their definition of $\filtrate$ as a submodule of $\basemod$ \cite[Prop.~5.3]{Cochoy2016} does not work for weakly exact persistence bimodules. That is, the resulting family of vector spaces does not assemble into a submodule. This poses a serious technical challenge that we overcome by defining the rectangle filtrate within a carefully constructed weakly exact submodule of $\basemod$ (\Cref{lem:submodule-is-submodule}). Proving that the resulting sum of rectangle filtrates is an internal direct sum (\Cref{prop:direct-sum}) can be adapted directly from the proof of the analogous result in the block-decomposable case \cite[Prop.~6.6]{Cochoy2016}. Contrary to this, proving that the rectangle filtrations generate $\basemod$ is considerably more involved, as the work from \cite[Sec.~7]{Cochoy2016} does not carry over to our setting. Our approach includes a series of technical lemmas that prove that it suffices to consider the restriction to a certain finite grid (\Cref{def:t-skeleton}). Once this is established, the result is a consequence of the structure theorem for finite grids (\Cref{thm:finite-case}).






\section{Functorial filtration and the counting functor}\label{sec:functorial-filtration}
In this section, we recall the definition of the \emph{functorial filtration} that we use to construct our rectangle filtrates in \Cref{sec:filtrates}. The functorial filtration is inspired by \cite{Ringel1975}, introduced in the $1$-d case by \cite{Crawley-Boevey2012} and generalized for the $2$-d case in \cite{Cochoy2016}. We write out the definitions and results already written in \cite{Cochoy2016} for completeness, and one may also read \cite[Example~3.3]{Cochoy2016} for an enlightening explicit computation of the functorial filtration. 

\begin{assu}
    \label{assu:countable-coinitial-intervals}
    In Sections~\ref{sec:functorial-filtration} through~\ref{sec:covering}, we assume that any interval of $X$ or $Y$ admits a countable coinitial subset.
\end{assu}

Consider a persistence module $\omod$ over $\poset$, and denote by~$\mor_s^t$ the internal morphism $\omod(s\leq t): \omod_s \to \omod_t$ for any $s\leq t\in \poset$.

\paragraph{Cuts.}
To set up the functorial filtration technique, we need a characterization of rectangles in $\poset$ using the notion of cuts. A cut $\cut$ of a totally ordered set $\abscisse$ is a partition of $\abscisse$ into two (possibly empty) sets $(\cut^-,\cut^+)$ such that $x < x'$ for all $x\in\cut^-$ and $x'\in\cut^+$. We call $\cut^-$ the \emph{lower part} and $\cut^+$ the \emph{upper part} of $\cut$. For instance, $\cut^- = (-\infty,1]$ and $\cut^+ = (1,+\infty)$ define a cut of $\R$. The following lemma is a direct consequence of the fact that $X$ is totally ordered.
\begin{lem}
    \label{prop:cuts-totally-ordered}
    In a totally ordered set, the set of all cuts~$\cut$ can be totally ordered in two canonical ways: inclusion on the lower part~$\cut^-$, or inclusion on the upper part~$\cut^+$. The two orders are opposite from each other.
\end{lem}

One can easily see that any interval $I$ in a totally ordered set $(T,\leq)$ can be written as $I = \lcut^+\cap\rcut^-$ for two cuts $\lcut$ and $\rcut$ of $T$ (see \cite[Sec.~3]{Crawley-Boevey2012}). Then, it follows from its definition that any rectangle $\rec$ of $\poset$ can be written as~$\rec = (\lcut^+\cap\rcut^-)\times(\bcut^+\cap\tcut^-)$, with two cuts of $\abscisse$ called the \emph{left cut} $\lcut$ and the \emph{right cut} $\rcut$, and with two cuts of $\ordinates$ called the \emph{top cut} $\tcut$ and the \emph{bottom cut} $\bcut$. Moreover, writing a block~$B = (\lcut^+\cap\rcut^-)\times(\bcut^+\cap\tcut^-)$, one can directly check from the definition of a block that:
\begin{itemize}
    \item $\rcut^+=\tcut^+=\emptyset$, if $B$ is a \emph{birth quadrant}, or 
    \item $\lcut^-=\bcut^-=\emptyset$, if $B$ is a \emph{death quadrant}, or 
    \item $\lcut^-=\rcut^+=\emptyset$, if $B$ is a \emph{horizontal band}, or 
    \item $\bcut^-=\tcut^+=\emptyset$, if $B$ is a \emph{vertical band}.
\end{itemize}
%

\paragraph{Pointwise filtration.} 
Consider a rectangle $\rec = (\lcut^+ \cap \rcut^-)\times (\bcut^+ \cap \tcut^-)$ of $\poset$ and let $t\in\rec$. We start by defining the following subspaces of $\omod_t$, called \emph{horizontal contributions} (associated to the left and the right cuts of $\rec$) and \emph{vertical contributions} (associated to the top and the bottom cuts of $\rec$) \emph{of $\rec$ in $\omod$}:
\begin{equation}\label{eqn:def-ImaKer-horizvertic}
    \begin{gathered}
      \begin{array}{lcl}
      \Ifilt{\lcut}{+}(\omod) = \bigcap\limits_{\begin{smallmatrix}x \in \lcut^+\\x \leq t_x\end{smallmatrix}} \Ima \mor_{(x, t_y)}^t
    &\quad&
    \Ifilt{\lcut}{-}(\omod) = \sum\limits_{x \in \lcut^-} \Ima \mor_{(x, t_y)}^t\\[2em]
    \Kfilt{\rcut}{+}(\omod) = \bigcap\limits_{x \in \rcut^+} \Ker \mor_t^{(x, t_y)}
  &&
    \Kfilt{\rcut}{-}(\omod) = \sum\limits_{\begin{smallmatrix}x \in \rcut^-\\x\geq t_x\end{smallmatrix}} \Ker \mor_t^{(x, t_y)}\\[2em]
    \Ifilt{\bcut}{+}(\omod) = \bigcap\limits_{\begin{smallmatrix}y \in \bcut^+\\y \leq t_y\end{smallmatrix}} \Ima \mor_{(t_x, y)}^t
    &\quad&
    \Ifilt{\bcut}{-}(\omod) = \sum\limits_{y \in \bcut^-} \Ima \mor_{(t_x, y)}^t\\[2em]
    \Kfilt{\tcut}{+}(\omod) = \bigcap\limits_{y \in \tcut^+} \Ker \mor_t^{(t_x, y)}
  &&
    \Kfilt{\tcut}{-}(\omod) = \sum\limits_{\begin{smallmatrix}y \in \tcut^-\\y\geq t_y\end{smallmatrix}} \Ker \mor_t^{(t_x, y)}
   \end{array} \end{gathered}
\end{equation}
with the convention that $\Ifilt{\cut}{-}(\omod) = 0$ when $c^- = \emptyset$ and $\Kfilt{\cut}{+}(\omod) = \omod_t$ when~$c^+ = \emptyset$. See \Cref{fig:ImaKer} for an illustration.
\begin{figure}[htb]
    \centering
    \includegraphics[scale=0.8]{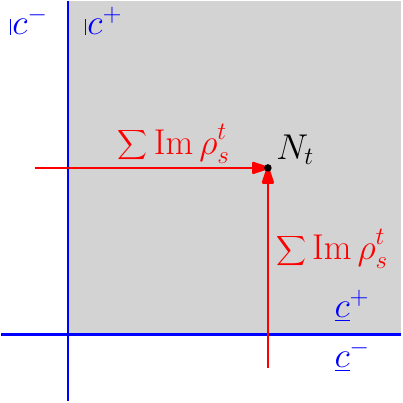}\hfill
    \includegraphics[scale=0.8]{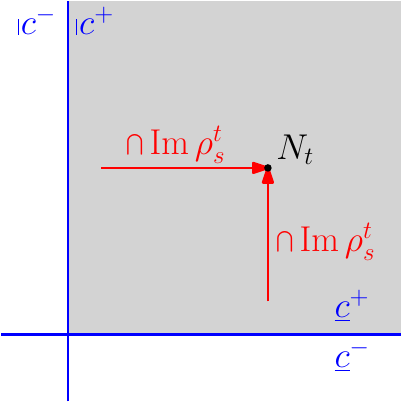}\\[5ex]
    \includegraphics[scale=0.8]{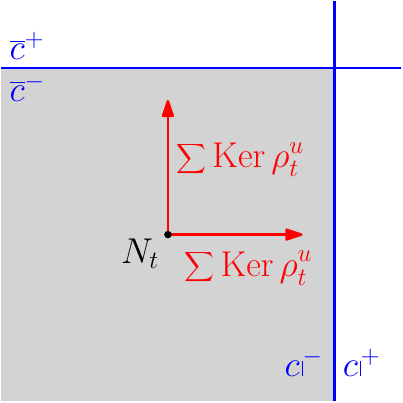}\hfill
    \includegraphics[scale=0.8]{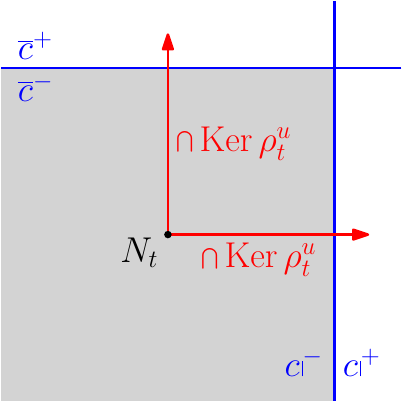}
    \caption{From top to bottom and from left to right: the spaces $\Ima^-_{\cut,t}$, $\Ima^+_{\cut,t}$, $\Ker^-_{\cut,t}$ and $\Ker^+_{\cut,t}$.}
    \label{fig:ImaKer}
\end{figure}
%
\begin{rk}
    In \cite{Cochoy2016}, the definitions and the results are stated for the poset $\poset = \R^2$, but all results hold verbatim (with identical proofs) for a general product $\poset$ under \Cref{assu:countable-coinitial-intervals}. From now on, we will therefore cite and use the results of \cite{Cochoy2016} in our setting.
\end{rk}

The following lemma states that when $\omod$ is pfd, horizontal and vertical contributions can be realized as kernels and images of its internal morphisms.
%
\begin{lem}[Realization]\cite[Lemma~3.1]{Cochoy2016}
    \label{lem:realization}
    Assume that $\omod$ is pfd. For any $t\in\rec$, one has:
    \begin{itemize}
        \item[] $\Ifilt{\lcut}{+}(\omod) = \Ima\mor_{(x,t_y)}^t$ for some $x\in\lcut^+\cap(-\infty,t_x]$ and any lower $x\in\lcut^+$,
        \item[] $\Ifilt{\lcut}{-}(\omod) = \Ima\mor_{(x,t_y)}^t$ for some $x\in\lcut^-\cup\{-\infty\}$ and any greater $x\in\lcut^-$,
        \item[] $\Kfilt{\rcut}{+}(\omod) = \Ker\mor_t^{(x,t_y)}$ for some $x\in\rcut^+\cup\{+\infty\}$ and any lower $x\in\rcut^+$,
        \item[] $\Kfilt{\rcut}{-}(\omod) = \Ker\mor_t^{(x,t_y)}$ for some $x\in\rcut^-\cap[t_x,+\infty)$ and any greater $x\in\rcut^-$,
    \end{itemize}
    with the conventions that $\mor_{(x,t_y)}^t : 0 \to \basemod_t$ when $x=-\infty$ and $\mor_t^{(x,t_y)}:\basemod_t \to 0$ when~$x=+\infty$. Similar statements hold for vertical cuts.
\end{lem}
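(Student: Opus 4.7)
The plan is to leverage the pointwise finite-dimensionality of $\basemod$ to collapse each infinite intersection or sum in~\eqref{eqn:def-ImaKer-horizvertic} to a single image or kernel. I would work out the first bullet (the case of $\Ifilt{\lcut}{+}$) in detail, then repeat the argument verbatim for the other three, with the vertical statements following by symmetry.

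First, I would establish monotonicity of the families involved. For $x \leq x'$ in $\lcut^+ \cap (-\infty, t_x]$, the factorization
\begin{equation*}
\mor_{(x, t_y)}^t = \mor_{(x', t_y)}^t \circ \mor_{(x, t_y)}^{(x', t_y)}
\end{equation*}
immediately gives $\Ima \mor_{(x, t_y)}^t \subseteq \Ima \mor_{(x', t_y)}^t$, so the image family grows weakly with $x$. Dually, for $t_x \leq x \leq x'$, the factorization $\mor_t^{(x', t_y)} = \mor_{(x, t_y)}^{(x', t_y)} \circ \mor_t^{(x, t_y)}$ yields $\Ker \mor_t^{(x, t_y)} \subseteq \Ker \mor_t^{(x', t_y)}$, giving monotonicity for the kernel families appearing in $\Kfilt{\rcut}{\pm}$.

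Next, I would invoke the fact that any totally ordered chain of subspaces of the finite-dimensional vector space $\basemod_t$ has at most $\dim \basemod_t + 1$ distinct values, so any monotone family of such subspaces must stabilize. Applied to the increasing family $(\Ima \mor_{(x, t_y)}^t)_{x \in \lcut^+ \cap (-\infty, t_x]}$, this produces an element $x_0$ of this parameter set (nonempty because $t \in \rec$ forces $t_x \in \lcut^+$) at which the image attains its minimum value, and moreover this minimum is attained at every lower $x \in \lcut^+$. Since $\Ifilt{\lcut}{+}$ is by definition the infimum of this chain, the first claim follows. The three remaining cases proceed identically; the extension of $\basemod$ at $\pm\infty$ is needed only to cover the boundary conventions $\Ifilt{\lcut}{-} = 0$ when $\lcut^- = \emptyset$ (realized as $x = -\infty$) and $\Kfilt{\rcut}{+} = \basemod_t$ when $\rcut^+ = \emptyset$ (realized as $x = +\infty$).

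I do not expect any genuine obstacle. The whole argument rests on the elementary fact that chains of subspaces in a finite-dimensional space stabilize, combined with the two factorization identities above. The only care required is bookkeeping: in each of the four cases one must verify that the parameter set is nonempty (a consequence of $t \in \rec$) and that stabilization occurs at a parameter lying inside the claimed range rather than merely somewhere in the cut.
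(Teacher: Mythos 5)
The paper does not prove this lemma itself—it cites it as \cite[Lemma~3.1]{Cochoy2016}—so there is no internal proof to compare against. Your argument is the standard one for such realization statements: monotonicity of the image/kernel families via factorization of the internal morphisms, followed by stabilization of a totally ordered chain of subspaces in a finite-dimensional vector space (pick a parameter where the integer-valued dimension attains its extremum; inclusion plus equal dimension forces equality). The bookkeeping is right: $t\in\rec$ guarantees $t_x\in\lcut^+\cap\rcut^-$ (nonemptiness of the relevant index sets), and the extension at $\pm\infty$ correctly supplies the degenerate cases $\lcut^-=\emptyset$ and $\rcut^+=\emptyset$. This is a complete and correct proof.
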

\begin{conv}\label{conv:extension}
    Throughout the rest of the paper, we keep the conventions on internal morphisms introduced in \Cref{lem:realization} without further reference to it.
\end{conv}
\begin{rk}
    \label{rk:extension-weak-exactness}
    The conventions defined in \Cref{lem:realization} are equivalent to considering 
    the extension $\extension{\omod}$ of~$\omod$ to the poset $\extension{X}\times \extension{Y}$ with $\extension{X} = X\cup\{\pm\infty\}$ and $\extension{Y} = Y\cup\{\pm\infty\}$ (and the obvious ordering) such that $\extension{\omod}_{(\pm\infty,\cdot)} = \extension{\omod}_{(\cdot,\pm\infty)} = 0$. This extension is called \emph{extension of $\omod$ at infinity} in \cite{Cochoy2016}. In this article, we only use the following fact: if $\omod$ is pfd and weakly exact, then so is its extension at infinity. This last fact can be checked by a direct computation. 
\end{rk}
\begin{rk}\label{rk:realization}
    Let $t\in \rec$ and consider a finite number $\omod_1, \dots, \omod_k$ of pfd persistence modules over~$\poset$. Then, there exists $x\in\lcut^+\cap(-\infty,t_x]$ such that for all $1\leq i \leq k$, one has
    \begin{equation*}
        \Ifilt{\lcut}{+}(\omod_i) = \Ima\omod_i\big((x,t_y)\leq t\big).
    \end{equation*}
    Similar remarks hold for other contributions.
\end{rk}

\bigskip

We combine horizontal and vertical contributions in the following way:
\begin{equation}\label{eqn:def-ImKer-rec}
    \begin{gathered}
    \begin{array}{rcl}
    \Ifilt{\rec}{+}(\omod) &=&  \Ifilt{\lcut}{+}(\omod) \cap \Ifilt{\bcut}{+}(\omod),\\[0.5em]
    \Ifilt{\rec}{-}(\omod) &=&  \left(\Ifilt{\lcut}{-}(\omod) + \Ifilt{\bcut}{-}(\omod)\right) \cap \Ifilt{\rec}{+}(\omod),\\[0.5em]
    &=& \Ifilt{\lcut}{-}(\omod) \cap \Ifilt{\bcut}{+}(\omod) + \Ifilt{\bcut}{-}(\omod)\cap \Ifilt{\lcut}{+}(\omod),\\[0.5em]
    \Kfilt{\rec}{+}(\omod) &=&  \left(\Kfilt{\rcut}{+}(\omod) + \Kfilt{\tcut}{-}(\omod)\right) \cap \left(\Kfilt{\rcut}{-}(\omod) + \Kfilt{\tcut}{+}(\omod)\right),\\[0.5em]
    &=&  \Kfilt{\rcut}{+}(\omod)\cap \Kfilt{\tcut}{+}(\omod)   + \Kfilt{\rcut}{-}(\omod) + \Kfilt{\tcut}{-}(\omod),\\[0.5em]
    \Kfilt{\rec}{-}(\omod) &=& \Kfilt{\rcut}{-}(\omod) + \Kfilt{\tcut}{-}(\omod),
\end{array}
\end{gathered}
\end{equation}
where equalities between formulas come from the inclusions $\Ifilt{\cut}{-}(\omod) \subseteq \Ifilt{\cut}{+}(\omod)$, $\Kfilt{\cut}{-}(\omod) \subseteq \Kfilt{\cut}{+}(\omod)$ and the following elementary lemma.
\begin{lem}\label{lem:inclusion-vector-spaces}
    Let $E$ be a $\field$-vector space. Let $A$, $B$ and $C$ three vector subspaces of $E$ such that $A\subseteq C$. Then, $(A+B)\cap C = A\cap C + B \cap C$.
\end{lem}
It is immediate from the definitions that $\Ifilt{\rec}{-}(\omod) \subseteq \Ifilt{\rec}{+}(\omod)$ and $\Kfilt{\rec}{-}(\omod) \subseteq \Kfilt{\rec}{+}(\omod)$. This leads us to the following important family of vector spaces.

\begin{defi}\label{def:functorial-filtration}
The \emph{functorial filtration of $\omod$ associated to $R$} is the following pair of families of vector spaces indexed by $t\in \rec$:
\begin{equation}
    \label{eq:filt-on-rec}
    \begin{split}
        \filt{+}<t>(\omod) &= \Ifilt{\rec}{+}(\omod) \cap \Kfilt{\rec}{+}(\omod), \\
        \filt{-}<t>(\omod) &= \Ifilt{\rec}{+}(\omod) \cap \Kfilt{\rec}{-}(\omod) + \Ifilt{\rec}{-}(\omod) \cap \Kfilt{\rec}{+}(\omod).
    \end{split}
\end{equation}
\end{defi}
\noindent Since $\Ifilt{\rec}{-}(\omod) \subseteq \Ifilt{\rec}{+}(\omod)$ and $\Kfilt{\rec}{-}(\omod) \subseteq \Kfilt{\rec}{+}(\omod)$, we also have $\filt{-}<t>(\omod) \subseteq \filt{+}<t>(\omod)$. If $\omod$ is interval-decomposable, then it is a small exercise to check that the dimension of the quotient vector space $\filt{+}<t>(\omod)/\filt{-}<t>(\omod)$ equals the multiplicity of the summand $\indimod$ in the decomposition.

The following lemma shows that vectors spaces in~\eqref{eqn:def-ImKer-rec} and~\eqref{eq:filt-on-rec} are preserved by the internal morphisms of pfd and weakly exact persistence bimodules.
\begin{lem}[Transportation]\cite[Corollary~3.5, Lemma~4.1]{Cochoy2016}
    \label{lem:transportation}
    Assume that $\omod$ is pfd and weakly exact. For any $s\leq t$ in $\rec$, we have:
    \begin{equation*}
    \begin{split}
        \mor_s^t\left(\Ifilt[s]{\rec}{\pm}(\omod)\right) &= \Ifilt[t]{\rec}{\pm}(\omod), \\
        \left(\mor_s^t\right)^{-1}\left(\Kfilt[t]{\rec}{\pm}(\omod)\right) &= \Kfilt[s]{\rec}{\pm}(\omod), \\
        \mor_s^t\left(\filt{\pm}<s>(\omod)\right) &= \filt{\pm}<t>(\omod).
    \end{split}
    \end{equation*}
\end{lem}
\begin{rk}
    \label{rk:func-filt-is-system}
    \Cref{lem:transportation} ensures that if $\omod$ is pfd and weakly exact, the families~$\left(\filt{\pm}<t>(\omod)\right)_{t\in\rec}$ form systems of vector spaces.
\end{rk}
\paragraph{Counting functor.} A key object in the ``filtration technique'' is the \emph{counting functor} associated to a rectangle~$\rec$. 
\begin{defi}\cite[Section~4]{Cochoy2016}
    \emph{The counting functor} $\countingfunctor$ \emph{associated to a rectangle}~$\rec$ is defined for a pfd and weakly exact persistence module $\omod$ over $\poset$ as the inverse limit:
    \begin{equation*}
        \countingfunctor(\omod) := \lim[t\in\rec] \filt{+}<t>(\omod)/\filt{-}<t>(\omod),
    \end{equation*}
    where the transition maps are given by the naturally defined quotient maps $\bar\mor_s^t : \filt{+}<s>(\omod)/\filt{-}<s>(\omod) \rightarrow \filt{+}<t>(\omod)/\filt{-}<t>(\omod)$.
\end{defi}
The \emph{counting functor} owes its name to the following crucial fact. 
\begin{lem}\cite[Lemma~4.2]{Cochoy2016}
    \label{lem:multiplicity}
    Assume that $\omod$ is pfd and rectangle-decomposable. For any rectangle~$\rec$ of $\poset$, the multiplicity of the summand~$\indimod$ in the rectangle-decomposition of~$\omod$ is given by~$\dim\countingfunctor(\omod)$.
\end{lem}

\section{Definition of the rectangle filtrates}\label{sec:filtrates}
The goal of this section is to define rectangle filtrates (\Cref{def:rectangle-filtrate}). In this section and in \Cref{sec:directsum,sec:covering}, we consider a pfd and weakly exact persistence module $\basemod$ over $\poset$, and denote by~$\mor_s^t$ the internal morphism $\basemod(s\leq t): \basemod_s \to \basemod_t$ for any $s\leq t\in \poset$. Let~$\rec = (\lcut^+ \cap \rcut^-)\times (\bcut^+ \cap \tcut^-)$ be a rectangle of $\poset$.
%
%
%
%
%
\subsection{Elements dead above the rectangle}\label{sec:elts-dead-above-rec}
%
The rectangle filtrate associated to $\rec$ will be constructed within the submodule of~$\basemod$ defined below (\Cref{lem:submodule-is-submodule}). Consider $\rec^- = \{t\in\poset|\, \exists s \in \rec, t\leq s \}$. Note that $\rec^- = \rcut^-\times\tcut^-$, so that the contributions $\Kfilt[t]{\rcut}{+}(\basemod)$ and $\Kfilt[t]{\tcut}{+}(\basemod)$ are well-defined for any $t\in\rec^-$. 
\begin{defi}\label{lem:submodule-is-submodule}
    We call \emph{submodule of $\basemod$ of elements dead above $\rec$}, and denote by $\submodule(\basemod)$, the submodule of $\basemod$ whose spaces at each $t\in \poset$ are given by:
    \begin{equation*}
        \submodule<t>\left(\basemod\right) = 
        \begin{cases}
            \Kfilt[t]{\rcut}{+}(\basemod) \cap \Kfilt[t]{\tcut}{+}(\basemod) &\mbox{if } t\in \rec^-, \\
            0 &\mbox{else.}
        \end{cases}
    \end{equation*}
\end{defi}
The fact that $\submodule(\basemod)$ yields a well-defined persistence submodule of~$\basemod$ is an easy consequence of the definition of horizontal and vertical contributions~\eqref{eqn:def-ImaKer-horizvertic}.
%
%
When there is no ambiguity, the submodule~$\submodule\left(\basemod\right)$ is referred to as~$\submodule$ for readability. 
\begin{prop}
    \label{prop:submodule-weakly-exact}
    The persistence module~$\submodule$ is weakly exact.
\end{prop}
\begin{proof}
    In this proof we write $\tilde{\mor}_u^v = \mor_{u|\submodule<u>}^v$ for any $u\leq v \in \poset$. Let $s\leq t\in\poset$ and denote $a = (s_x,t_y)$ and $b = (t_x,s_y)$. Let us first prove the equality:
    \begin{equation}
        \label{eq:weak-exact-ker}
        \Ker\tilde{\mor}_s^t = \Ker\tilde{\mor}_s^a + \Ker\tilde{\mor}_s^b.
    \end{equation}
    Suppose that $t\notin\rec^-$. Then $\Ker\tilde{\mor}_s^t = \submodule<s>$. Moreover, $a\notin\rec^-$ or $b\notin\rec^-$, so $\Ker\tilde{\mor}_s^a = \submodule<s>$ or $\Ker\tilde{\mor}_s^b = \submodule<s>$. Hence~\eqref{eq:weak-exact-ker} in that case. Now suppose that $t\in\rec^-$. Then $s\in\rec^-$, $a\in\rec^-$ and $b\in\rec^-$. Therefore, one has $\Ker\mor_s^a \subseteq \Kfilt[s]{\tcut}{+}(\basemod)$ and $\Ker\mor_s^b \subseteq \Kfilt[s]{\rcut}{+}(\basemod)$. Using the weak exactness of $\basemod$ and \Cref{lem:inclusion-vector-spaces} twice, we get:
    \begin{equation*}
        \begin{split}
        \Ker\tilde{\mor}_s^t
        &= \left(\Ker\mor_s^t\right)\cap\submodule<s> \\
        &= \left(\Ker\mor_s^a + \Ker\mor_s^b\right)\cap\Kfilt[s]{\rcut}{+}(\basemod) \cap \Kfilt[s]{\tcut}{+}(\basemod) \\
        &= \left(\Ker\mor_s^a\cap\Kfilt[s]{\rcut}{+}(\basemod) + \Ker\mor_s^b\cap\Kfilt[s]{\rcut}{+}(\basemod)\right) \cap \Kfilt[s]{\tcut}{+}(\basemod) \\
        &= \Big(\Ker\mor_s^a\Big)\cap\submodule<s> + \left(\Ker\mor_s^b\right)\cap\submodule<s>\\
        &= \Ker\tilde{\mor}_s^a + \Ker\tilde{\mor}_s^b.
        \end{split}
    \end{equation*}

    Let us now prove the equality~$\Ima\tilde{\mor}_s^t = \Ima\tilde{\mor}_a^t \cap \Ima\tilde{\mor}_b^t$, i.e.
    \begin{equation}
        \label{eq:weak-exact-ima}
        \mor_s^t\left(\submodule<s>\right) = \mor_a^t\left(\submodule<a>\right) \cap \mor_b^t\left(\submodule<b>\right).
    \end{equation}
    The inclusion $\mor_s^t\left(\submodule<s>\right) \subseteq \mor_a^t\left(\submodule<a>\right) \cap \mor_b^t\left(\submodule<b>\right)$ is clear. Let us show the converse. Applying \Cref{lem:realization} at the point $s$ and at the point $a$, one can choose low enough~$y\in\tcut^+\cup\{+\infty\}$ such that:
    \begin{equation*}
        \begin{split}
            \Kfilt[s]{\tcut}{+}(\basemod) &= \Ker\mor_s^{(s_x, y)},\\
            \Kfilt[a]{\tcut}{+}(\basemod) &= \Ker\mor_a^{(s_x, y)}.
        \end{split}
    \end{equation*}
    Similarly, there exists $x\in\rcut^+\cup\{+\infty\}$ such that:
    \begin{equation*}
        \Kfilt[b]{\rcut}{+}(\basemod) = \Ker\mor_b^{(x, s_y)}, \quad \text{and} \quad \Kfilt[s]{\rcut}{+}(\basemod) = \Ker\mor_s^{(x, s_y)}.
    \end{equation*}
    The result will follow from repeated use of the weak exactness property of $\basemod$ (and \Cref{rk:extension-weak-exactness}). The following diagram will help picturing the various spaces involved in this proof. Denote~$c=(s_x, y)$,~$d = (x, s_y)$, $e=(t_x, y)$ and $f = (x, t_y)$.
    \begin{equation*}
        \begin{tikzcd}
            {\basemod_{c}} \arrow[r]           & {\basemod_{e}}         &                     \\
            {\basemod_{a}} \arrow[u] \arrow[r] & \basemod_{t} \arrow[u] \arrow[r]         & {\basemod_{f}} \\
            {\basemod_{s}} \arrow[u] \arrow[r]   & {\basemod_{b}} \arrow[r] \arrow[u] & {\basemod_{d}} \arrow[u]
            \end{tikzcd}
    \end{equation*}
    Let $z\in\submodule<t>$ be such that there are $z_a\in\submodule<a>$ and $z_b\in\submodule<b>$ such that $z = \mor_a^t(z_a) = \mor_b^t(z_b)$. Since $\basemod$ is weakly exact, there exists $z_s\in\basemod_s$ such that $z = \mor_s^t(z_s)$. 
    
    We claim that $z_s \in \Ker\mor_s^e$. Indeed, using that $z_a \in \Kfilt[a]{\tcut}{+}(\basemod) = \Ker\mor_a^c$, we get~$\mor_s^e(z_s) = \mor_t^e(z) = \mor_a^e(z_a) = \mor_c^e\circ \mor_a^c(z_a) = 0$. 

    Thus, by weak exactness of $\basemod$ and \Cref{rk:extension-weak-exactness}, there exist $z'\in\Ker\mor_s^b$ and $z''\in\Ker\mor_s^c$ such that $z_s = z' + z''$. This yields that $z = \mor_s^t(z_s) = \mor_s^t(z'')$. 

    We claim now that $z''\in\Ker\mor_s^f$. Indeed, using that $z_b \in \Kfilt[b]{\rcut}{+}(\basemod) = \Ker\mor_b^d$, we get~$\mor_s^f(z'') = \mor_t^f(z) = \mor_b^f(z_b) = \mor_d^f\circ \mor_b^d(z_b) = 0$.

    Thus, by weak exactness of $\basemod$ and \Cref{rk:extension-weak-exactness}, there exist $\tilde{z}'\in\Ker\mor_s^a$ and $\tilde{z}''\in\Ker\mor_s^d$ such that $z'' = \tilde{z}' + \tilde{z}''$. This yields that $z = \mor_s^t(z'') = \mor_s^t(\tilde{z}'')$. 
    
    We now claim that~$\tilde{z}'' \in \submodule<s>$, which completes the proof. Indeed, on the one hand $\tilde{z}''\in\Ker\mor_s^d = \Kfilt[s]{\rcut}{+}(\basemod)$. On the other, one has~$\tilde{z}'' = z''- \tilde{z}' \in\Ker\mor_s^c = \Kfilt[s]{\tcut}{+}(\basemod)$. Hence the result.

\end{proof}

\subsection{Rectangle filtrate}
Let $\omod$ be a pfd and weakly exact persistence module over $\poset$. Recall the definition of the functorial filtration (\Cref{def:functorial-filtration}) and the systems of vector spaces they form (\Cref{rk:func-filt-is-system}). As in \cite{Crawley-Boevey2012} and \cite{Cochoy2016}, we consider the following inverse limits:
\begin{equation}
    \limfilt{\pm}(\omod) = \lim[t\in\rec]\filt{\pm}<t>(\omod). 
\end{equation}
Note that, denoting $\limmor_t:\limfilt{+}(\omod) \to \filt{+}<t>(\omod)$ the natural map given by the limit, one has the following identification:
\begin{equation}\label{eq:limfilt-inclusion}
    \limfilt{-}(\omod) = \bigcap_{t\in\rec}\limmor_t^{-1}\left(\filt{-}<t>(\omod)\right)\subseteq \limfilt{+}(\omod).
\end{equation}
This implies that for any $t\in\rec$, the morphism $\limmor_t$ induces a morphism:
\begin{equation*}
    \bar{\limmor}_t : \limfilt{+}(\omod)/\limfilt{-}(\omod) \longrightarrow \filt{+}<t>(\omod) /  \filt{-}<t>(\omod).
\end{equation*}
\begin{lem}[\textnormal{\cite[Lem.~5.2]{Cochoy2016}}]
    \label{lem:iso-limfilt-filt}
    Recall that $\omod$ is pfd and weakly exact. For $t\in\rec$, the map $\bar{\limmor}_t : \limfilt{+}(\omod)/\limfilt{-}(\omod) \longrightarrow \filt{+}<t>(\omod) /  \filt{-}<t>(\omod)$ is an isomorphism.
\end{lem}
The rectangle filtrate of $\basemod$ associated to $\rec$ will be defined as the submodule of $\basemod$ given by the following proposition (\Cref{def:rectangle-filtrate}).
\begin{prop}
    \label{prop:recsubmod}
        Let $\limrecsubmod$ be a vector space complement of $\limfilt{-}(\submodule)$ in $\limfilt{+}(\submodule)$. For~$t\in\poset$, consider the vector subspace of $\submodule<t>$ given by:
    \begin{equation*}
        \filtrate<t> :=
        \begin{cases}
            \limmor_t(\limrecsubmod) &\mbox{ if } t\in R, \\
            0 &\mbox{ else,}
        \end{cases}
    \end{equation*}
    where $\limmor_t: \limfilt{+}(\submodule) \longrightarrow \filt{+}<t>(\submodule)$ is the natural maps given by the limit for $t\in\rec$. Then, the family $(\filtrate<t>)_{t\in\poset}$ forms a submodule of $\submodule$ (hence of $\basemod$).
\end{prop}
\begin{proof}
    Let $s\leq t$ in $\poset$. Suppose that $s\not\in \rec$. Then, one has $\mor_s^t(\filtrate<s>) = 0 \subseteq \filtrate<t>$.
    Now, suppose that $s$ and $t$ both lie in $\rec$. By definition of $\limmor$, one has $\mor_s^t \circ \limmor_s = \limmor_t$. Thus, one has $\mor_s^t(\filtrate<s>) = \mor_s^t(\limmor_s(\limrecsubmod))= \limmor_t(\limrecsubmod) = \filtrate<t>$. 
    
    Finally, suppose that $s\in \rec$ and $t\not\in \rec$. We show that $\mor_s^t(\filtrate<s>) = 0$. One has $\limmor_s(\limrecsubmod)\subseteq \Kfilt[s]{\rcut}{+} \cap \Kfilt[s]{\tcut}{+}$, for every $s\in \rec$. Moreover, for every $t \geq s$ with $s\in\rec$ and $t\not\in \rec$, we have $t_x\in \rcut^+$ or $t_y\in\tcut^+$, so that $\mor_s^t(\Kfilt[s]{\rcut}{+} \cap \Kfilt[s]{\tcut}{+}) = 0$. Hence:
    \begin{equation*}
        \mor_s^t\left(\filtrate<s>\right) = \mor_s^t(\limmor_s(\limrecsubmod)) \subseteq \mor_s^t(\Kfilt[s]{\rcut}{+} \cap \Kfilt[s]{\tcut}{+}) = 0.
    \end{equation*}
\end{proof}
%
%
\begin{rk}\label{rk:filtrate-complement-filt-minus}
    Since we have chosen $\limrecsubmod$ such that $\limfilt{+}(\submodule) = \limrecsubmod\oplus\limfilt{-}(\submodule)$, for every $t\in\rec$ we have 
    $\filt{+}<t>(\submodule) = \filtrate<t> \oplus \filt{-}<t>(\submodule)$ by \Cref{lem:iso-limfilt-filt}.
\end{rk}
\begin{defi}[Rectangle filtrate]
    \label{def:rectangle-filtrate}
    Let $\limrecsubmod$ be a vector space complement of $\limfilt{-}(\submodule)$ in $\limfilt{+}(\submodule)$. The submodule of $\basemod$ defined in \Cref{prop:recsubmod} is called a \emph{rectangle filtrate of $\basemod$ associated to $\rec$} and denoted by $\filtrate$.
\end{defi}
%
Our work on rectangle filtrates will rely on \Cref{prop:recsubmod} and \Cref{rk:filtrate-complement-filt-minus} and thus will not depend on the choice of vector space complement $\limrecsubmod$ of $\limfilt{-}(\submodule)$ in $\limfilt{+}(\submodule)$. The following convention will therefore be used.
\begin{conv}\label{conv:choice-of-complement}
    For each rectangle~$\rec'$ of $\poset$, choose a vector space complement $\limrecsubmod[\rec']$ of $\limfilt[\rec']{-}(\submodule)$ in $\limfilt[\rec']{+}(\submodule)$. From now on, the rectangle filtrate $\filtrate[\rec']$ will refer to the one associated to this choice of $\limrecsubmod[\rec']$. 
\end{conv}
Note that the axiom of choice is used in the above convention. This is inevitable in order to consider infinite families of rectangle filtrates in \Cref{sec:directsum,sec:covering}. The following lemma shows that~$\filtrate$ is rectangle-decomposable.
\begin{lem}
    \label{lem:recsubmodisisotorecmod}
    The persistence module $\filtrate$ is isomorphic to a direct sum of $\dim \countingfunctor(\submodule)$ copies of the rectangle module $\indimod[R]$.
\end{lem}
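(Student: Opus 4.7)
My plan is to build an explicit isomorphism $\phi:\indimod[\rec]^m\to\filtrate$, where $m:=\dim\limrecsubmod$, and then identify $m$ with $\dim\countingfunctor(\basemod)$. Fix a basis $v_1,\dots,v_m$ of $\limrecsubmod$ and set $\phi_t(e_i):=\limmor_t(v_i)$ for $t\in\rec$ (and $\phi_t:=0$ otherwise). The cone identity $\mor_s^t\circ\limmor_s=\limmor_t$, together with the containment $\mor_s^t(\filtrate<s>)\subseteq\filtrate<t>$ proved in \Cref{prop:recsubmod}, makes $\phi$ a morphism of persistence modules, and each $\phi_t$ is surjective by the very definition $\filtrate<t>=\limmor_t(\limrecsubmod)$. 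Everything therefore hinges on proving that $\limmor_t$ restricts to an injective map on $\limrecsubmod$ for every $t\in\rec$.

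I expect this injectivity to be the main obstacle, and I plan to tackle it by propagating the vanishing at $t$ to every other index of $\rec$. If $w\in\limrecsubmod$ satisfies $\limmor_t(w)=0$, I will argue that $\limmor_s(w)\in\filt{-}<s>$ for every $s\in\rec$, which forces $w\in\limfilt{-}\cap\limrecsubmod=0$. The first ingredient is that $\rec$ is a product of intervals, hence closed under joins; so $s\vee t\in\rec$ and the cone identity gives $\limmor_s(w)\in\Ker\mor_s^{s\vee t}$. The second ingredient is the weak exactness of $\basemod$ applied to the square with corners $s$ and $s\vee t$:
\begin{equation*}
\Ker\mor_s^{s\vee t}=\Ker\mor_s^{((s\vee t)_x,s_y)}+\Ker\mor_s^{(s_x,(s\vee t)_y)}.
\end{equation*}
Since $t\in\rec$ places $(s\vee t)_x\in\rcut^-$ and $(s\vee t)_y\in\tcut^-$ with $(s\vee t)_x\geq s_x$ and $(s\vee t)_y\geq s_y$, the two summands sit inside $\Kfilt[s]{\rcut}{-}$ and $\Kfilt[s]{\tcut}{-}$ respectively, hence inside $\Kfilt[s]{\rec}{-}$. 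Combined with $\limmor_s(w)\in\filt{+}<s>\subseteq\Ifilt[s]{\rec}{+}$, this places $\limmor_s(w)$ in $\Ifilt[s]{\rec}{+}\cap\Kfilt[s]{\rec}{-}\subseteq\filt{-}<s>$, as wanted.

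To identify $m$, I use that $\limrecsubmod$ is a vector space complement of $\limfilt{-}$ in $\limfilt{+}$, so $m=\dim(\limfilt{+}/\limfilt{-})$. The pointwise short exact sequence $0\to\filt{-}<s>\to\filt{+}<s>\to\filt{+}<s>/\filt{-}<s>\to 0$ satisfies the Mittag-Leffler condition since every space is finite-dimensional; combined with the standing assumption that $\abscisse$ and $\ordinates$ admit countable coinitial subsets, \cite[Chap.~0, Proposition~13.2.2]{Grothendieck1961} yields exactness of the inverse-limit sequence. This identifies $\limfilt{+}/\limfilt{-}$ with $\countingfunctor(\basemod)$, giving $m=\dim\countingfunctor(\basemod)$ and hence $\filtrate\simeq\indimod[\rec]^m$.
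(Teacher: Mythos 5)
Your proof is correct and, as far as one can tell, takes the route the paper intends: the paper's own ``proof'' is only a pointer to \cite[Lemma~5.5]{Cochoy2016} (which in turn rests on \cite[Lemma~5.2]{Cochoy2016}), so it does not spell out the argument, whereas you supply a self-contained one. Your strategy --- build $\phi$ from a basis of $\limrecsubmod$ and the limit cone, get surjectivity of each $\phi_t$ for free, reduce everything to injectivity of $\limmor_t$ on $\limrecsubmod$, and conclude with a Mittag--Leffler computation of $m$ --- is exactly what the citation is standing in for. The pivotal step, your injectivity argument, is sound: $\rec$ is closed under joins since each of its two coordinate intervals is closed under binary max in a totally ordered set; the cone identity propagates $\limmor_t(w)=0$ to $\limmor_{s\vee t}(w)=0$ and hence $\limmor_s(w)\in\Ker\mor_s^{s\vee t}$; weak exactness on $\squarepos{s}{s\vee t}$ splits that kernel into horizontal and vertical pieces, which lie in $\Kfilt[s]{\rcut}{-}$ and $\Kfilt[s]{\tcut}{-}$ because $(s\vee t)_x\in\rcut^-$, $(s\vee t)_y\in\tcut^-$ and both dominate $s_x$, $s_y$; intersecting with $\Ifilt[s]{\rec}{+}$ then forces $\limmor_s(w)\in\filt{-}<s>$ for every $s\in\rec$, whence $w\in\limfilt{-}\cap\limrecsubmod=0$. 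This is, in effect, a direct proof of the content of \cite[Lemma~5.2]{Cochoy2016} in the weakly exact setting, which is precisely the dependency the paper flags. The final Mittag--Leffler step and the appeal to \cite[Chap.~0, Prop.~13.2.2]{Grothendieck1961} to identify $\dim(\limfilt{+}/\limfilt{-})$ with $\dim\countingfunctor(\basemod)$ is the same device already used in the proof of \Cref{prop:recsubmod} under the same countability hypothesis, so you are not introducing any new assumption. No gaps.
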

\begin{proof}
    The proof is a carbon copy of the one of \cite[Lemma~5.5]{Cochoy2016} and is included here for completeness. Since $\submodule$ is pfd and weakly exact (\Cref{prop:submodule-weakly-exact}), we know from \Cref{lem:iso-limfilt-filt} that the morphism $\bar{\limmor}_t$ is an isomorphism for any $t\in\rec$.

    Let~$\Gamma$ be a (finite) basis of~$\limrecsubmod$. For any~$\gamma\in\Gamma$, the elements of~$\limmor_t(\gamma)$ for~$t\in\rec$ are non-zero and they satisfy~$\mor_s^t(\limmor_s(\gamma)) = \limmor_t(\gamma)$ for all~$s\leq t$ in~$\rec$, so they span a submodule~$N(\gamma)$ of~$\filtrate$ that is isomorphic to~$\indimod[\rec]$. Now, for all~$t\in\rec$ the family~$\{\mor_t(\gamma)\}_{\gamma\in\Gamma}$ is a basis of~$\filtrate<t>$, so~$\filtrate \simeq \bigoplus_{\gamma\in\Gamma} N(\gamma)$. Finally, the size of the basis~$\Gamma$ is~$\dim\limrecsubmod = \dim \countingfunctor(\submodule)$.

\end{proof}


\subsection{Description of rectangle filtrates in the finitely rectangle-decomposable case}
We prove the following lemma showing that rectangle filtrates capture rectangle summands in the particular case of finite rectangle-decompositions. It will be crucial in the proof of \Cref{prop:cover} in \Cref{sec:covering}. 
\begin{lem}
    \label{lem:equality-dim-CF}
    If $\basemod\simeq \bigoplus_{i\in I} \indimod[\rec_i]^{m_i}$ where $I$ is a finite set, the $\rec_i$'s are pairwise distinct rectangles of $\poset$ and the $m_i$'s are positive integers, then one has~$\filtrate[\rec_i] \simeq \indimod[\rec_i]^{m_i}$, for any $i\in I$.
\end{lem}
In fact, this lemma also holds when the rectangle decomposition is only locally finite but we do not use such a general statement in the paper. The proof of \Cref{lem:equality-dim-CF} is postponed to the end of this section. It uses the following lemma that gives an expression of $\submodule\left(\basemod\right)$ when $\basemod$ is decomposable as a finite direct sum of rectangle modules. Recall the definition of $\rec^-$ from \Cref{sec:elts-dead-above-rec}.
\begin{lem}
    \label{lem:submodule-decomposable-case}
    Suppose that $\basemod\simeq \bigoplus_{i\in I} \indimod[\rec_i]$ where $I$ is a finite set and the $\rec_i$'s are rectangles of $\poset$. Then, one has:
    \begin{equation*}
        \submodule[\rec]\left(\basemod\right) \simeq \bigoplus_{\begin{smallmatrix}i\in I\\\rec_i \subseteq \rec^-\end{smallmatrix}} \indimod[\rec_i].
    \end{equation*}
\end{lem}
\Cref{lem:submodule-decomposable-case} is a direct consequence of \Cref{lem:capdirectsum,lem:submodule-additive,lem:submodule-recmod} below. The first lemma is elementary. It is used in the proof of \Cref{lem:submodule-additive}.
\begin{lem}
  \label{lem:capdirectsum}
  Let~$E$ be a~$\field$-vector space and~$E_1,\,E_2$ be two subspaces of~$E$ such that~$E = E_1 \oplus E_2$. Let~$A_1,B_1$ two subspaces of~$E_1$, and~$A_2,\, B_2$ two subspaces of~$E_2$. Then,
  \begin{equation*}
      (A_1 \oplus A_2)\cap(B_1 \oplus B_2) = (A_1\cap B_1) \oplus (A_2 \cap B_2).
  \end{equation*}
\end{lem}
\begin{lem}
    \label{lem:submodule-additive}
    Let $\omod_1$ and $\omod_2$ be two pfd and weakly exact persistence modules over~$\poset$. One has $\submodule(\omod_1\oplus\omod_2) = \submodule(\omod_1)\oplus\submodule(\omod_2)$.
\end{lem}
\begin{proof}
    The module $\omod_1$ can naturally be seen as a submodule of $\omod_1\oplus\omod_2$ by identification with $\omod_1\oplus\zeromod$. 
    Similarly, any submodule of $\omod_2$ can naturally be seen as a submodule of $\omod_1\oplus\omod_2$. Therefore, $\submodule(\omod_1)$ and $\submodule(\omod_2)$ can as well be naturally seen as submodules of $\omod_1\oplus\omod_2$. We implicitly make these identifications in the rest of the proof. To prove the result, it is thus sufficient to prove the equality $\submodule(\omod_1\oplus\omod_2) = \submodule(\omod_1) \oplus \submodule(\omod_2)$ of persistence submodules of $\omod_1\oplus\omod_2$. In other words, it is sufficient to prove that for any $t\in\poset$, one has:
    \begin{equation*}
        \submodule<t>(\omod_1\oplus \omod_2) = \submodule<t>(\omod_1) \oplus \submodule<t>(\omod_2).
    \end{equation*}
    Let $t\in\poset$. For $t\notin\rec^-$, both sides of the equality vanish, so let us assume that~$t\in\rec^-$. Denote by $\rho$ the internal morphisms of $\omod_1$ and by $\eta$ the internal morphisms of $\omod_2$. The internal morphisms of $\omod_1\oplus\omod_2$ are then given by $\rho\oplus\eta$. By \Cref{rk:realization}, there exists $x\in\rcut^+\cup\{+\infty\}$ such that:
    \begin{equation*}
        \begin{split}
            \Kfilt{\rcut}{+}(\omod_1) &= \Ker\rho_t^{(x,t_y)},\\
            \Kfilt{\rcut}{+}(\omod_2) &= \Ker\eta_t^{(x,t_y)},\\
            \Kfilt{\rcut}{+}(\omod_1\oplus \omod_2) &= \Ker\left(\rho_t^{(x,t_y)}\oplus\eta_t^{(x,t_y)}\right) = \left(\Ker\rho_t^{(x,t_y)}\right) \oplus \left(\Ker\eta_t^{(x,t_y)}\right).
        \end{split}
    \end{equation*}
    Hence, one has $\Kfilt{\rcut}{+}(\omod_1\oplus \omod_2) = \Kfilt{\rcut}{+}(\omod_1)\oplus\Kfilt{\rcut}{+}(\omod_2)$. Similarly, one can prove that $\Kfilt{\tcut}{+}(\omod_1\oplus \omod_2) = \Kfilt{\tcut}{+}(\omod_1)\oplus\Kfilt{\tcut}{+}(\omod_2)$. This implies:
    \begin{equation*}
        \begin{split}    
            \submodule<t>(\omod_1\oplus\omod_2) 
            &\hspace{0.7em} = \hspace{0.7em} \Kfilt{\rcut}{+}(\omod_1\oplus \omod_2) \cap \Kfilt{\tcut}{+}(\omod_1\oplus \omod_2) \\
            &\hspace{0.7em} = \hspace{0.7em} \left(\Kfilt{\rcut}{+}(\omod_1)\oplus\Kfilt{\rcut}{+}(\omod_2)\right) \cap \left(\Kfilt{\tcut}{+}(\omod_1)\oplus\Kfilt{\tcut}{+}(\omod_2)\right) \\
            &\hspace{-1em}\overset{\textnormal{(Lem.~\ref{lem:capdirectsum})}}{=} \left(\Kfilt{\rcut}{+}(\omod_1)\cap\Kfilt{\tcut}{+}(\omod_1)\right)\oplus\left(\Kfilt{\rcut}{+}(\omod_2)\cap \Kfilt{\tcut}{+}(\omod_2)\right) \\
            &\hspace{0.7em} = \hspace{0.7em} \submodule<t>(\omod_1) \oplus \submodule<t>(\omod_2).
        \end{split}
    \end{equation*}

\end{proof}
\begin{lem}
    \label{lem:submodule-recmod}
    Let $\rec$ and $\rec'$ be two rectangles of $\poset$. Then,
    \begin{equation*}
        \submodule\left(\indimod[\rec']\right) =
                    \begin{cases}
                        \indimod[\rec'] & \mbox{ if } \rec'\subseteq \rec^-, \\
                        0 & \mbox{ else.}
                    \end{cases}
    \end{equation*}
\end{lem}
\begin{proof}
    Write $\rec = (\lcut^+\cap \rcut^-)\times(\bcut^+\cap\tcut^-)$ and $\rec' = (\lcut[d]^+\cap \rcut[d]^-)\times(\bcut[d]^+\cap\tcut[d]^-)$. The persistence module $\submodule\left(\indimod[\rec']\right)$ is a submodule of $\indimod[\rec']$, so we only have to check that for any $t\in\poset$, one has $\submodule<t>\left(\indimod[\rec']\right) = \field$ if $t\in\rec'$ and $\rec'\subseteq \rec^-$, and $\submodule<t>\left(\indimod[\rec']\right) = 0$ otherwise. Let $t\in\poset$. One has $\submodule<t>\left(\indimod[\rec']\right) \subseteq \indimod[\rec']<t> = 0$ for any $t\not\in\rec'$, so let us suppose that $t\in\rec'$. 
    
    Suppose that $\rec'\subseteq \rec^-$. For any $x\in\rcut^+$, one has $x\in\rcut[d]^+$, so $(x,t_y)\not\in\rec'$ and $\indimod[\rec'](t\leq (x,t_y)) = 0$. Thus, one has $\Kfilt{\rcut}{+}(\indimod[\rec']) = \indimod[\rec']<t>$. Similarly, one has $\Kfilt{\tcut}{+}(\indimod[\rec']) = \indimod[\rec']<t>$, and hence $\submodule<t>\left(\indimod[\rec']\right) = \indimod[\rec']<t> = \field$. 

    Now, suppose that~$\rec' \not\subseteq \rec^-$. There exists~$x\in \rcut[d]^-\setminus \rcut^-$ or~$y\in\tcut[d]^-\setminus \tcut^-$. Say there exists~$x\in \rcut[d]^-\setminus \rcut^-$, the other case being similar. Suppose first that~$t\not\in\rec^-$. Then, one has~$\submodule<t>\left(\indimod[\rec']\right) = 0$ by definition of~$\submodule$. Now, suppose that~$t\in\rec^-$. Since~$x\not\in\rcut^-$, one has~$x\geq t_x$. Moreover, since~$t\in\rec'$, one has~$t_x \in\lcut[d]^+$, thus~$x\in\lcut[d]^+$. Therefore, one has~$x\in\lcut[d]^+\cap\rcut[d]^-$ and~$(x,t_y)\in\rec'$. Hence~$\indimod[\rec'](t\leq (x,t_y))$ is an isomorphism and~$\Kfilt{\rcut}{+}(\indimod[\rec'])\subseteq \Ker \indimod[\rec'](t\leq (x,t_y)) = 0$. Therefore, one has~$\submodule<t>\left(\indimod[\rec']\right) = 0$.

\end{proof}

\begin{rk}\label{rk:multiplicities-submodule-basemod}
    Consider the setting of \Cref{lem:submodule-decomposable-case}. Since $\rec\subseteq \rec^-$, \Cref{lem:submodule-decomposable-case} implies that the multiplicities of the summand $\indimod[\rec]$ in the rectangle-decompositions of $\submodule(\basemod)$ and of $\basemod$ are the same.
\end{rk}
\begin{proof}[Proof of \Cref{lem:equality-dim-CF}]
    Let $i\in I$. \Cref{lem:recsubmodisisotorecmod} yields:
    \begin{equation*}
        \filtrate[\rec_i] \simeq \indimod[\rec_i]^{\dim\countingfunctor[\rec_i]\left(\submodule[\rec_i]\left(\basemod\right)\right)}.
    \end{equation*}
    Moreover, \Cref{lem:submodule-decomposable-case} ensures that $\submodule[\rec_i]\left(\basemod\right)$ is rectangle-decomposable. Then, \Cref{lem:multiplicity} ensures that $\dim\countingfunctor[\rec_i]\left(\submodule[\rec_i]\left(\basemod\right)\right)$ is equal to the multiplicity of the summand $\indimod[\rec_i]$ in the rectangle-decomposition of $\submodule[\rec_i]\left(\basemod\right)$. In fact, \Cref{rk:multiplicities-submodule-basemod} implies that this multiplicity is the same as the multiplicity of $\indimod[\rec_i]$ in the rectangle-decomposition of $\basemod$, which is $m_i$. Hence the result.

\end{proof}

\section{The sum of rectangle filtrates is a direct sum}\label{sec:directsum}
Recall that $\basemod$ is a pfd and weakly exact persistence bimodule and that its internal morphisms are denoted by~$\mor_s^t: \basemod_s \to \basemod_t$ for any $s\leq t\in \poset$. Recall also our choice of rectangle filtrates; \Cref{conv:choice-of-complement}. In this section, we prove that the sum of $\filtrate$ for~$\rec$ ranging over all rectangles in $\poset$, is an internal direct sum of $\basemod$.
\begin{prop}
    \label{prop:direct-sum}
    The sum of $\left(\filtrate\right)_{\rec:\,\textnormal{rectangle}}$ is a direct sum.
\end{prop}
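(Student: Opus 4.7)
I show that any finite linear dependence $\sum_{i=1}^n x_i = 0$ at an index $t \in \poset$, with $x_i \in \filtrate[\rec_i]<t>$ and pairwise distinct rectangles $\rec_1, \ldots, \rec_n$, must be trivial; since each $\filtrate[\rec_i]$ has support exactly $\rec_i$ by \Cref{lem:recsubmodisisotorecmod}, I may assume $t \in \rec_i$ for every $i$. I follow the two-step reduction of \cite[Proposition~6.6]{Cochoy2016}.

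The first step reduces to the case where all rectangles $\rec_i$ share a common upper-right corner $(\rcut, \tcut)$. Because the cuts of each totally ordered set $X$ and $Y$ are linearly ordered by inclusion of lower parts, I select $(\rcut, \tcut)$ maximal in the finite family $\{(\rcut_i, \tcut_i)\}$ under componentwise inclusion. For every $j$ with $(\rcut_j, \tcut_j) \neq (\rcut, \tcut)$, linear comparability forces $\rcut_j^- \subsetneq \rcut^-$ or $\tcut_j^- \subsetneq \tcut^-$. By finiteness I pick $s_x \in \rcut^- \setminus \bigcup\{\rcut_j^- : \rcut_j^- \subsetneq \rcut^-\}$ with $s_x \geq t_x$, and $s_y$ analogously, so that $s = (s_x, s_y) \geq t$ belongs to every rectangle with corner $(\rcut, \tcut)$ but to no rectangle with a strictly smaller such corner. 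Pushing the dependence forward via $\mor_t^s$ and invoking \Cref{lem:transportation} together with the exact-support property of the filtrates annihilates the non-maximal-corner contributions, yielding a strictly shorter dependence. Iterating eliminates all upper-right corners but one.

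The second step treats this common-corner case with $\rec_i = (\lcut_i^+ \cap \rcut^-) \times (\bcut_i^+ \cap \tcut^-)$ through a dual reduction on lower-left corners: pick $\rec_{j_0}$ maximal in the order $\recreleq$ among the family (equivalently, with minimal lower parts), and find $s \leq t$ that lies in $\rec_{j_0}$ but outside every $\rec_i$ with strictly greater lower parts, constructed by the same coordinate-wise argument applied to the $(\lcut_i, \bcut_i)$. The isomorphism $\mor_s^t \colon \filtrate[\rec_i]<s> \xrightarrow{\sim} \filtrate[\rec_i]<t>$ for $s, t \in \rec_i$, which holds because $\filtrate[\rec_i] \cong \indimod[\rec_i]^{m_{\rec_i}}$, combined with the internal structure of $\filt{+}[\rec_{j_0}]$ described by \Cref{lem:Vplusdecomposable} and \Cref{cor:dfilt-rectangle-computations}, allows me to isolate the contribution of $\rec_{j_0}$ and reduce the length of the dependence. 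Iteration concludes the argument.

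The principal obstacle is the coordinated construction of $s$ on both axes in each step: when competing cuts across the family are not linearly comparable, $s_x$ and $s_y$ must be jointly realizable in the strict complements of the non-maximal cuts, which relies on the independence of the two axes in the product $\poset = X \times Y$ and on the total ordering of cuts on each axis. The second step is more delicate because the backward-direction reduction cannot simply transport the dependence along $\mor_s^t$; it requires working inside the filtrate $\filt{+}[\rec_{j_0}]$ and invoking the inverse-limit description of the counting functor to extract the isolation of individual terms.
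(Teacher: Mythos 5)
Your first step (reducing to the case of a common upper-right corner by pushing the relation forward along $\mor_t^u$ to a point $u$ chosen in $\rcut^-\times\tcut^-$ but outside the supports of the rectangles with non-maximal corner, then invoking injectivity of $\mor_t^u$ restricted to each filtrate) is essentially the paper's argument for the general case, and it is sound.

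The second step is where the proposal has a genuine gap. You propose to resolve the common-corner case by a ``dual reduction on lower-left corners'' with a point $s\leq t$, and you yourself flag the problem: ``the backward-direction reduction cannot simply transport the dependence along $\mor_s^t$.'' That flag is correct and fatal as stated. Given a relation $\sum_i x_i = 0$ in $\basemod_t$ with $x_i\in\filtrate[\rec_i]<t>$, there is no way to pull it back to $s$: for the rectangles $\rec_i$ with $s\notin\rec_i$ we have $\filtrate[\rec_i]<s>=0$ even though $\filtrate[\rec_i]<t>\neq 0$, and the vector $x_i$ need not lie in $\mor_s^t(\basemod_s)$ in any controlled way. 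The follow-up sentence --- ``working inside the filtrate $\filt{+}[\rec_{j_0}]$ and invoking the inverse-limit description of the counting functor to extract the isolation of individual terms'' --- is not a proof but a hope, and the appeal to \Cref{lem:Vplusdecomposable} would be circular here: that lemma presupposes a rectangle decomposition of $\basemod$, which is exactly what the direct-sum statement is one step of establishing.

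The paper handles the common-corner case entirely differently and without any transport. Choosing $\rec_1$ to be the rectangle whose bottom-left corner $(\lcut_1,\bcut_1)$ is \emph{maximal} in the lexicographic order on cuts (so $\rec_1$ is the ``innermost'' one; you went in the opposite direction), the other rectangles split into those strictly containing $\rec_1$ and those incomparable to it. A direct computation with the image-filtration spaces shows
\begin{equation*}
\filtrate[\rec_1]<t>\cap\left(\textstyle\sum_{i\geq 2}\filtrate[\rec_i]<t>\right)\subseteq \Ifilt{\rec_1}{-},
\end{equation*}
using the inclusions $\Ima^+_{\lcut_i,t}\subseteq\Ima^{\pm}_{\lcut_1,t}$ etc.\ forced by the ordering of cuts, and then $\Ifilt{\rec_1}{-}\cap\filtrate[\rec_1]<t>=\filt[\rec_1]{-}<t>\cap\filtrate[\rec_1]<t>=0$ since the filtrate is by construction a vector-space complement of $\filt{-}$ in $\filt{+}$. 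Induction on the family size finishes. You would need to replace your vague second step with something of this form; as written, the proof is incomplete.
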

The proof of the above result is a straightforward adaptation of the proof of \cite[Proposition~6.6]{Cochoy2016}. As in \cite{Cochoy2016}, we first prove the result when all the rectangles share the same upper right corner, then that we can always reduce to this specific case. The following lemma will be instrumental in the proof.
\begin{lem}
    \label{lem:inclusion-Ima-recs-included}
    Let $\omod$ be a pfd and weakly exact persistence module over $\poset$. Let $\rec_1 = (\lcut_1^+ \cap \rcut_1^-) \times (\bcut_1^+ \cap \tcut_1^-)$ and $\rec_2 = (\lcut_2^+ \cap \rcut_2^-) \times (\bcut_2^+ \cap \tcut_2^-)$ be two rectangles of $\poset$ such that $(\rcut_1,\tcut_1) = (\rcut_2,\tcut_2)$ and $\rec_1$ is a strict subset of~$\rec_2$. For any~$t\in \rec$, one has $\Ima_{\rec_2,t}^+\left(\omod\right) \subseteq \Ima_{\rec_1,t}^-\left(\omod\right)$.
\end{lem}
\begin{proof}
    Elementary geometric considerations show that $\lcut_2^+\supseteq \lcut_1^+$ and $\bcut_2^+\supseteq \bcut_1^+$, and that one of the two inclusions must be strict, i.e. $\lcut_2^+\supsetneq \lcut_1^+$ or $\bcut_2^+\supsetneq \bcut_1^+$. From the first two inclusions and the definition of horizontal and vertical contributions~\eqref{eqn:def-ImaKer-horizvertic}, it follows that:
    \begin{equation}\label{eq:inclu-Im-inclu-cut}
            \Ima_{\lcut_2,t}^+\left(\omod\right) \subseteq \Ima_{\lcut_1,t}^+\left(\omod\right) \textnormal{ and } \Ima_{\bcut_2,t}^+\left(\omod\right) \subseteq \Ima_{\bcut_1,t}^+\left(\omod\right).
    \end{equation}
    From the second two inclusions and the definition of horizontal and vertical contributions, it follows that:
    \begin{equation}\label{eq:inclu-Im-strict-inclu-cut}
        \Ima_{\lcut_2,t}^+\left(\omod\right) \subseteq \Ima_{\lcut_1,t}^-\left(\omod\right) \textnormal{ or } \Ima_{\bcut_2,t}^+\left(\omod\right) \subseteq \Ima_{\bcut_1,t}^-\left(\omod\right).
        \end{equation}
    From~\eqref{eq:inclu-Im-inclu-cut} and~\eqref{eq:inclu-Im-strict-inclu-cut}, we finally deduce:
    \begin{equation*}
    \begin{split}
        \Ima_{\rec_2,t}^+\left(\omod\right) 
        &= \Ima_{\lcut_2,t}^+\left(\omod\right) \cap \Ima_{\bcut_2,t}^+\left(\omod\right) \\
        &\subseteq \Ima_{\lcut_1,t}^+\left(\omod\right) \cap \Ima_{\bcut_1,t}^-\left(\omod\right) + \Ima_{\lcut_1,t}^-\left(\omod\right) \cap \Ima_{\bcut_1,t}^+\left(\omod\right) \\
        &= \Ima_{\rec_1,t}^-\left(\omod\right).
    \end{split}
    \end{equation*}
\end{proof}
\begin{proof}[Proof of \Cref{prop:direct-sum}]
    Let $(\rec_i)_{i\in \libr 1,n \ribr}$ be a finite family of pairwise distinct rectangles,
    and write~$\rec_i = (\lcut[\cut_i]^+ \cap \rcut[\cut_i]^-) \times (\bcut[\cut_i]^+ \cap \tcut[\cut_i]^-)$. We show that the sum of the submodules $(\filtrate[\rec_i])_{i\in \libr 1,n\ribr}$ is an internal direct sum of $\basemod$, i.e that for any $t\in \poset$, the sum of the subspaces $(\filtrate[\rec_i]<t>)_{i\in \libr 1,n\ribr}$ is an internal direct sum of $\basemod_t$. 
    

    \medskip

    \paragraph{Case where all rectangles have the same upper right corner.}
    Suppose that the set $\{(\rcut[\cut_1],\tcut[\cut_1]),\cdots, (\rcut[\cut_n],\tcut[\cut_n])\}$ of upper right corners is a singleton. The proof of \cite[Proposition~6.6]{Cochoy2016} can be adapted in a straightforward way, replacing the words ``birth quadrants" by ``rectangles with the same upper right corner". We write the proof here for the sake of completeness.

    First, note the equality $\submodule[\rec_i](\basemod) = \submodule[\rec_j](\basemod)$ of submodules of $\basemod$ for all $1\leq i,j \leq n$. Therefore, we denote this submodule of $\basemod$ simply by $\submodule[]$.

    It suffices to prove is that there is at least one of the $\rec_i$'s (say $\rec_1$) such that the (binary) sum of $\filtrate[\rec_1]$ and $\sum_{i\ne 1}\filtrate[\rec_i]$ is a direct sum.
    Then the result follows from an induction on the size~$n$ of the family. Hence, we prove that for any $t\in \poset$, we have:
    \begin{equation*}
        \filtrate[\rec_1]<t>\cap \left(\sum_{i\ne 1}\filtrate[\rec_i]<t>\right) = 0.
    \end{equation*}
    
    Let $t\in \poset$. Since $\filtrate[\rec_i]<t> = 0$ for every $i$ such that $t\notin\rec_i$, we can assume without loss of generality that $t\in\rec_i$ for every~$i\in\libr 1,n \ribr$.

    Up to reordering, we can assume that $\rec_1$ has the rightmost left cut and, in case of ties, that
    it also has the topmost bottom cut among the rectangles with the same left cut. Formally, $\rec_1$ is the rectangle whose bottom left corner is maximal in the lexicographical order on the multiset of bottom left corners $\{(\lcut[\cut_1],\bcut[\cut_1]),\cdots, (\lcut[\cut_n],\bcut[\cut_n])\}$ induced by the total order on cuts given by inclusion on their lower parts (\Cref{prop:cuts-totally-ordered}). It follows that $\rec_1$ contains none of the other rectangles. Those can be partitioned into two subfamilies: the ones (say $\rec_2, \cdots, \rec_k$) contain $\rec_1$ strictly, while the others ($\rec_{k+1}, \cdots, \rec_n$) neither contain~$\rec_1$ nor are contained in $\rec_1$. See \Cref{fig:birth_quadrants} for an illustration. 

    \begin{figure}[htb]
    \centering
    \includegraphics[scale=0.7]{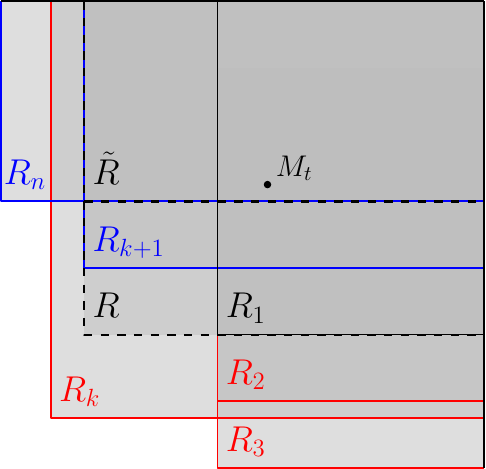}
    \caption{Rectangles partitioned into two subfamilies.}
    \label{fig:birth_quadrants}
    \end{figure}

    For any rectangle $\rec'$ of $\poset$, we denote $\Ima_{\rec',t}^+\left(\submodule[]\right)$ by $\Ima_{\rec',t}^+$ until the end of the proof. For every $i\in \libr 2, k\ribr$, \Cref{lem:inclusion-Ima-recs-included} implies $\Ima_{\rec_i,t}^+ \subseteq \Ima_{\rec_1,t}^-$. Therefore, we obtain:
    \begin{equation}\label{eqn:same-ur-corner-sum-im-plus}
    \sum_{i=2}^k \Ima_{\rec_i,t}^+ \subseteq \Ima_{\rec_1,t}^-.
    \end{equation}
    For every $i\in \libr k+1, n\ribr$, we have $\lcut_i^+\supsetneq\lcut_1^+$ and $\bcut_i^+\subsetneq\bcut_1^+$. Let $\tilde\rec = \bigcap_{i=k+1}^n \rec_{i}$ --- this rectangle neither contains $\rec_1$ nor is contained in it. Let now $\rec$ be the smallest rectangle containing both $\rec_1$ and $\tilde\rec$. We have:
    \begin{equation}\label{eqn:same-ur-corner-cap-im-plus}
        \Ima_{\rec_1,t}^+ \cap \left(\sum_{i=k+1}^n \Ima_{\rec_i,t}^+\right) \subseteq \Ima_{\rec_1,t}^+\cap \Ima_{\tilde\rec,t}^+ = \Ima_{\rec,t}^+ \subseteq \Ima_{\rec_1,t}^-,
    \end{equation}
    where the last inclusion follows from \Cref{lem:inclusion-Ima-recs-included} and the fact that $\rec$ strictly contains $\rec_1$. Combining \eqref{eqn:same-ur-corner-sum-im-plus} and \eqref{eqn:same-ur-corner-cap-im-plus}, we obtain:
    \begin{equation*}
        \begin{split}
            \filtrate[\rec_1]<t>\cap \left(\sum_{i=2}^k \filtrate[\rec_i]<t> + \sum_{i=k+1}^n \filtrate[\rec_i]<t>\right)
            & \subseteq
            \Ima_{\rec_1,t}^+ \cap \left(\sum_{i=2}^k \Ima_{\rec_i,t}^+ + \sum_{i=k+1}^n \Ima_{\rec_i,t}^+\right) \\
            & = \sum_{i=2}^k \Ima_{\rec_i,t}^+ + \Ima_{\rec_1,t}^+ \cap \left(\sum_{i=k+1}^n \Ima_{\rec_i,t}^+\right) \\
            & \subseteq \Ima_{\rec_1,t}^-.
        \end{split}
    \end{equation*}
    Meanwhile, \Cref{rk:filtrate-complement-filt-minus} implies that $\filtrate[\rec_1]<t> \cap \filt[\rec_1]{-}<t>\left(\submodule[]\right) = 0$. Thus,
    \begin{equation*}
        \Ifilt{\rec_1}{-} \cap \filtrate[\rec_1]<t> = \Ifilt{\rec_1}{-} \cap \filt[\rec_1]{+}<t>\left(\submodule[]\right) \cap \filtrate[\rec_1]<t> \subseteq \filt[\rec_1]{-}<t>\left(\submodule[]\right) \cap \filtrate[\rec_1]<t> = 0.
    \end{equation*}
    Since $\filtrate[\rec_1]<t> \cap (\sum_{i=2}^n \filtrate[\rec_i]<t>)$ is contained in both $\filtrate[\rec_1]<t>$ and $\Ifilt{\rec_1}{-}$, one has:
    \begin{equation*}
        \filtrate[\rec_1]<t>\cap \left(\sum_{i=2}^n \filtrate[\rec_i]<t>\right) \subseteq \filtrate[\rec_1]<t> \cap \Ifilt{\rec_1}{-} = 0.
    \end{equation*}

    \paragraph{General case.} Let $t\in \poset$. For each $i\in \libr 1,n\ribr$, let $z_i \in \filtrate[\rec_i]<t>$. Denote $z = \sum_{i=1}^n z_i$ and suppose that $z=0$. Let us show that $z_i = 0$ for all $i\in \libr 1,n\ribr$. Again, since $z_i = 0$ for every $i$ such that $t\notin\rec_i$, we can assume without loss of generality that $t\in\rec_i$ for every~$i\in\libr 1,n \ribr$.

    Order the collection $(\rcut[\cut_1],\tcut[\cut_1]) \preceq \dots \preceq (\rcut[\cut_n],\tcut[\cut_n])$ of upper right corners by the lexicographical order $\preceq$ induced by the total order on cuts given by inclusion on their lower parts (\Cref{prop:cuts-totally-ordered}). Let $(\rcut[\ocut_1],\tcut[\ocut_1]) \prec \dots \prec (\rcut[\ocut_k],\tcut[\ocut_k])$ be the distinct elements in the ordered sequence. In particular, $(\rcut[\ocut_{k}],\tcut[\ocut_{k}])$ is the upper right corner of rectangles with the rightmost right cut and, in case of ties, the topmost top cut. Let $J = \{i\in \libr 1,n \ribr, \, (\rcut[\cut_i],\tcut[\cut_i]) = (\rcut[\ocut_{k}],\tcut[\ocut_{k}]) \}$ and let us show that for all $j\in J$, $z_j = 0$. A direct recursion will then yield $z_i = 0$, for all $i\in \libr 1,n \ribr$.

    By maximality of $(\rcut[\ocut_k],\tcut[\ocut_k])$ in the lexicographical order on upper right corners, there exists $u\in \rcut[\ocut_{k}]^- \times \tcut[\ocut_{k}]^- \setminus (\bigcup_{l\ne k}(\rcut[\ocut_l]^- \times \tcut[\ocut_l]^-)$. Therefore, for $j\notin J$, we have $u\notin \rec_j$, so $\filtrate[\rec_j]<u> = 0$, and thus $\mor_t^u(z_j) = 0$. Hence,
    \begin{equation}
        \label{eqn:sumI0vanishes}
        0 = \mor_t^u(z) = \sum_{i=1}^n \mor_t^u(z_i) = \sum_{j\in J}\mor_t^u(z_j).
    \end{equation}
    Moreover, for all $j\in J$, we have $t\in \rec_j$ and $u\in \rec_j$, thus $\mor_{t}^u$ restricted to $\filtrate[R_{j}]<t>$ is injective by \Cref{lem:recsubmodisisotorecmod}. Therefore, it only remains to show that, for all $j\in J$, one has~$\mor_t^u(z_j) = 0$.

    Since the rectangles of the family~$(\rec_j)_{j\in J}$ all have same upper right corner, the first case ensures that the sum of~$(\filtrate[\rec_j])_{j\in J}$ is a direct sum. Yet, the element~$\mor_t^u(z_j)$ belong to~$\filtrate[\rec_j]<u>$ for any~$j\in J$, so \cref{eqn:sumI0vanishes} implies~$\mor_t^u(z_j) = 0$ for all~$j\in J$, which concludes the proof.

\end{proof}

\section{Rectangle filtrates cover $\basemod$}\label{sec:covering}
Recall that $\basemod$ is a pfd and weakly exact persistence bimodule and that its internal morphisms are denoted by~$\mor_s^t: \basemod_s \to \basemod_t$ for any $s\leq t\in \poset$. Again, recall our choice of rectangle filtrates; \Cref{conv:choice-of-complement}. The goal of this section is to prove the following. 
\begin{prop}
    \label{prop:cover}
    The (direct) sum of submodules $(\filtrate)_{\rec :\,\textnormal{rectangle}}$ generates $\basemod$, i.e.
    \begin{equation*}
        \basemod  = \bigoplus_{R:\emph{ rectangle}} \filtrate.
    \end{equation*}
\end{prop}


To prove the above proposition, we first consider the restriction of $\basemod$ to a specifically constructed finite grid; see \Cref{def:t-skeleton}. This restriction captures all the information on kernels and images of internal morphisms of $\basemod$ accessible from a fixed index $t\in\poset$; see \Cref{lem:grid}. Then, \Cref{lem:unique-rec} explains how the rectangle filtrates of the restriction relates to the rectangle filtration of $\basemod$. These two lemmas, in conjunction with \Cref{thm:finite-case} and \Cref{lem:equality-dim-CF}, will prove \Cref{prop:cover}. \Cref{thm:rec-dec-weak-exact} then follows as a corollary of \Cref{prop:cover} and \Cref{lem:recsubmodisisotorecmod}. The proofs of \Cref{lem:grid,lem:unique-rec} have been postponed to \Cref{sec:grid,sec:linkresbase}, respectively.
\begin{lem}
    \label{lem:grid}
    Let $t\in\poset$. There exist natural numbers $L_h$, $\rightindex$, $L_v$, $\upperindex{}$ and a finite grid 
    \begin{equation*}
        \grid = (x_i,y_j)_{(i,j)\in \libr \leftindex,\rightindex\ribr\times\libr \lowerindex{},\upperindex{}\ribr} \subseteq \poset
    \end{equation*}
    such that:
    \begin{enumerate}[label=(\roman*)]
        \item\label{itm:t} $t = (x_0,y_0)$,
        \item\label{itm:orderKer} for all $i \in \libr 0,\rightindex\ribr$, we have $\Ker\mor_t^{(x_i,t_y)} \subsetneq \Ker\mor_t^{(x_{i+1},t_y)}$ and same for vertical kernels where $x_{i} = +\infty$ for $i = \rightindex+1$ and $y_{j} = +\infty$ for $j = \upperindex+1$ by convention (recall also \Cref{conv:extension}),
        \item \label{itm:orderIm} for all $i \in \libr \leftindex,0 \ribr$, we have $\Ima\mor^t_{(x_{i-1},t_y)} \subsetneq \Ima\mor^t_{(x_i,t_y)}$ and same for vertical images where $x_{i} = -\infty$ for $i = \leftindex-1$ and $y_{j} = -\infty$ for $j = \lowerindex-1$ by convention,
        \item\label{itm:coverKer} for all $x\in [t_x,+\infty]$, there exists $i \in \libr 0,\rightindex+1\ribr$ such that $\Ker\mor_t^{(x,t_y)} = \Ker\mor_t^{(x_{i},t_y)}$ and same for vertical kernels.
        \item\label{itm:coverIm} for all $x\in [-\infty,t_x]$, there exists $i \in \libr \leftindex-1,0 \ribr$ such that $\Ima\mor^t_{(x,t_y)} = \Ima\mor^t_{(x_i,t_y)}$ and same for vertical images.
    \end{enumerate}
\end{lem}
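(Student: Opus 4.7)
The core input is that $\basemod_t$ is finite-dimensional, so every monotone chain of subspaces of $\basemod_t$ contains only finitely many distinct terms. I will construct the horizontal coordinates $x_{\leftindex}, \ldots, x_{\rightindex} \in X$; the vertical coordinates $y_{\lowerindex}, \ldots, y_{\upperindex} \in Y$ are obtained by running exactly the same argument on the vertical slice through $t$, so conditions~\ref{itm:t}--\ref{itm:coverIm} for the horizontal direction imply the corresponding vertical statements.

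Working in the extension of $\basemod$ at infinity, the family $\{\Ker \mor_t^{(x, t_y)}\}_{x \in [t_x, +\infty]}$ is a non-decreasing chain of subspaces of $\basemod_t$, because for $x \leq x'$ the morphism $\mor_t^{(x', t_y)}$ factors as $\mor_{(x, t_y)}^{(x', t_y)} \circ \mor_t^{(x, t_y)}$. By pfd-ness this chain takes only finitely many distinct values, which I list in strictly increasing order as $\{0\} = K_0 \subsetneq K_1 \subsetneq \cdots \subsetneq K_N$, where $K_N$ is the value attained at $+\infty$. Setting $x_0 := t_x$ (so that $\Ker \mor_t^t = K_0$), I pick, for each $i \in \libr 1, N-1 \ribr$, a finite point $x_i \in X$ realizing $\Ker \mor_t^{(x_i, t_y)} = K_i$, and I take $\rightindex := N - 1$. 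The convention $x_{\rightindex + 1} = +\infty$ then picks up the maximal value $K_N$, and by construction the chain $K_0 \subsetneq K_1 \subsetneq \cdots \subsetneq K_{\rightindex} \subsetneq K_N$ yields \ref{itm:orderKer} and~\ref{itm:coverKer}.

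The images on the left of $t$ are handled symmetrically: the family $\{\Ima \mor_{(x, t_y)}^t\}_{x \in [-\infty, t_x]}$ is a non-decreasing chain of subspaces of $\basemod_t$ indexed by increasing $x$, and has only finitely many distinct values by pfd-ness. Starting from $x_0 = t_x$ (at which the image equals $\basemod_t$, the top of the chain), I pick representatives $x_{-1} > x_{-2} > \cdots$ in $X$ realizing each strict drop in image, stopping at $x_{\leftindex}$ one step before the minimal value so that the convention $x_{\leftindex - 1} = -\infty$ captures this infimum. This produces~\ref{itm:orderIm} and~\ref{itm:coverIm}. The choice $x_0 = t_x$ and its vertical analogue $y_0 = t_y$ give~\ref{itm:t}, and the finite product grid $\grid = (x_i, y_j)_{(i,j) \in \libr \leftindex, \rightindex \ribr \times \libr \lowerindex, \upperindex \ribr}$ is the one required.

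The only delicate point is the bookkeeping at $\pm \infty$: because conditions~\ref{itm:orderKer} and~\ref{itm:orderIm} require strict inclusion across the boundary with $x_{\rightindex + 1} = +\infty$ (resp. $x_{\leftindex - 1} = -\infty$), the extremal value of the chain must be reserved for the infinity index rather than assigned to a finite representative, which fixes the value of $\rightindex$ (resp. $\leftindex$). Edge cases in which the chain is trivial (e.g.\ all kernels equal to $0$) are handled by letting the range of indices degenerate accordingly so that~\ref{itm:orderKer}--\ref{itm:orderIm} hold vacuously. Beyond this case analysis the proof is a direct consequence of the finite length of chains of subspaces in a finite-dimensional vector space.
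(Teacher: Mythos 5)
Your proof is correct and takes essentially the same approach as the paper's: use pfd-ness to conclude that the monotone chain of kernels (resp. images) along the horizontal slice through $t$ takes only finitely many distinct values, pick one finite representative per intermediate value with $x_0 = t_x$ and the extreme values reserved for the $\pm\infty$ conventions, then take the product grid. The only cosmetic difference is that the paper tracks dimensions of the subspaces while you track the subspaces directly; these are interchangeable here because inclusions of finite-dimensional spaces are strict exactly when the dimensions differ.
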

\begin{defi}\label{def:t-skeleton}
    Let $t\in\poset$. Any finite grid $G\subseteq \poset$ given by \Cref{lem:grid} is called \emph{\lifegrid{t} of $\basemod$}.
\end{defi}
\begin{rk}
    Note that the statements \ref{itm:orderKer} and \ref{itm:orderIm} ensure that the indices $i$ given in \ref{itm:coverKer} and \ref{itm:coverIm} -- realizing kernels and images of the base module inside the grid -- are unique.
\end{rk}

\begin{ex}
    \label{ex:grid}
    \Cref{fig:grid-example} illustrates a $t$-skeleton when $\basemod$ is the direct sum of rectangle modules associated to rectangles $\rec_1,\rec_2$ and $\rec_3$. 
\end{ex}

\begin{figure}[!ht]
    \centering
    \includegraphics[width=\textwidth]{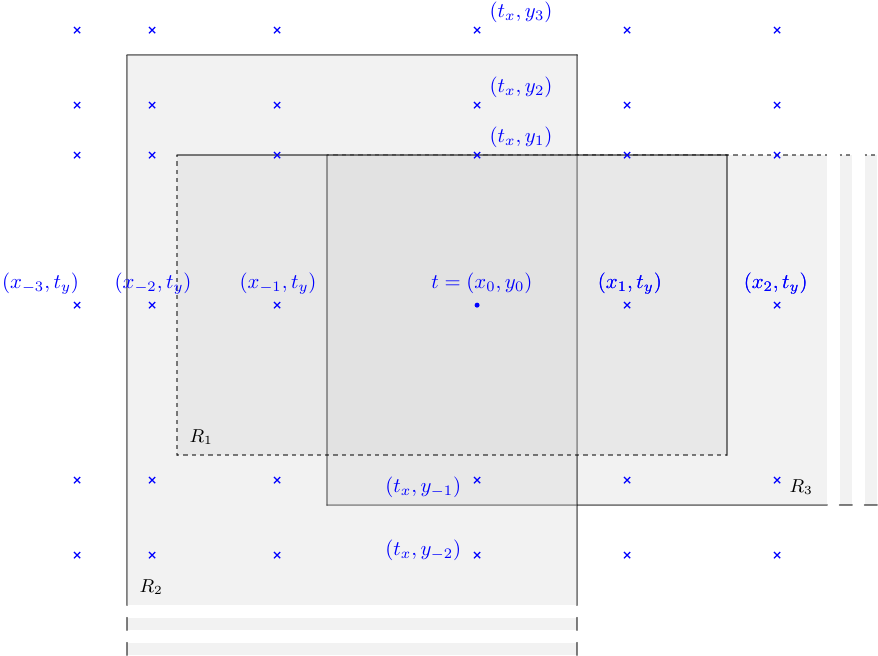}
    \caption{Example of a choice of a grid construction as in \Cref{lem:grid}. Dashed lines denote open boundaries and dashed rectangles denote infinite sides. The initial point $t$ is denoted by a dot, while crosses denote points on the constructed grid.}
    \label{fig:grid-example}
\end{figure}



\begin{lem}
    \label{lem:unique-rec}
    Let $t\in \poset$, let $\grid$ be a \lifegrid{t} of $\basemod$ and denote $\gridres := \basemod_{|\grid}$. To any rectangle $\recres{} = (\lcut[\cutres]^+ \cap \rcut[\cutres]^-) \times (\bcut[\cutres]^+ \cap \tcut[\cutres]^-)$ of $\grid$ such that $t\in \recres$, one can associate a rectangle $\rec = (\lcut^+ \cap \rcut^-) \times (\bcut^+ \cap \tcut^-)$ of $\poset$, such that:
    \begin{enumerate}[label=(\roman*)]
        \item\label{item:t-in-rec} $t\in\rec$,
        \item\label{itm:equality-ker-im} one has:
        \begin{equation*}
            \begin{split}
                \Kfilt{\rcut[\cutres]}{\pm}(\gridres) &= \Kfilt{\rcut}{\pm}(\basemod), \\
                \Ifilt{\lcut[\cutres]}{\pm}(\gridres) &= \Ifilt{\lcut}{\pm}(\basemod),
            \end{split}
        \end{equation*}
        and similarly for vertical cuts,
        \item\label{itm:injectivity-ker-im} the map $\recres{}\mapsto\rec$ is injective.
    \end{enumerate} 
    In particular,
    \begin{equation*}
        \dim \filtrate[\rec]<t> = \dim \filtrate[\res{\rec}]<t>^\grid.
    \end{equation*}
\end{lem}
We can now prove \Cref{prop:cover}.
\begin{proof}[Proof of \Cref{prop:cover}]
    Let $t\in \poset$ and let us show that
    \begin{equation}
        \label{eqn:cover-pointwise}
        \basemod_t  = \bigoplus_{R:\textnormal{ rectangle}} \filtrate<t>.
    \end{equation}

    Take a $t$-skeleton $\grid$ of $\basemod$ given by \Cref{lem:grid}, and denote $\gridres := \basemod_{|\grid}$. Notice that, since $t\in\grid$ by \ref{itm:t} of \Cref{lem:grid}, we have
    \begin{equation}
        \label{eqn:basemod-equal-gridres-t}
        \basemod_t = \gridres_t.
    \end{equation}
    Moreover, since $\gridres$ is still pfd and weakly exact, it decomposes as a direct sum of rectangle modules by \Cref{thm:finite-case}:
    \begin{equation}
        \label{eq:decomposition-gridres}
        \gridres \simeq \bigoplus_{j\in\tilde{J}} \left(\indimod[\res{\rec}_j]^\grid\right)^{m_{\res{\rec}_j}},
    \end{equation}
    where the rectangles $\res{\rec}_j$ are pairwise distinct rectangles of the grid $\grid$, and where the integers $m_{\res{\rec}_j}>0$ are the multiplicities of the rectangle modules $\indimod[\res{\rec}_j]^\grid$ in the decomposition.
    Since $\grid$ is finite and $\gridres$ is pfd, the set $\tilde{J}$ appearing in the decomposition~\eqref{eq:decomposition-gridres} is finite. Therefore, \Cref{lem:equality-dim-CF} implies that for each $j\in \tilde{J}$,
    \begin{equation}
        \label{eq:dim-filtrate-multiplicity}
        \dim\left(\filtrate[\res{\rec}_j]<t>^\grid\right) = m_{\res{\rec}_j}.
    \end{equation}
    Besides, since $t\in\grid$, we can consider the subset $J := \{j\in\tilde{J}\,|\, t\in \res{\rec}_j\}$, so that:
    \begin{equation}
        \label{eq:dim-sum}
        \dim\left(\gridres_t\right) = \sum_{j\in J} m_{\tilde{\rec}_j}.
    \end{equation}
    Meanwhile, writing $\{\rec_i\}_{i\in I}$ the set of rectangles of $\poset$ containing $t$, \Cref{lem:unique-rec} yields an injection $\iota : J \hookrightarrow I$, such that
    \begin{equation}
        \label{eqn:dim-filtrate-injection}
        \dim \filtrate[\rec_{\iota(j)}]<t> = \dim \filtrate[\recres_j]<t>^\grid.
    \end{equation}
    We can compute the finite dimensions:
    \begin{equation*}
        \begin{split}
            \dim(\basemod_t)
            &\overset{\textnormal{\eqref{eqn:basemod-equal-gridres-t}}}{=} \dim\left(\gridres_t\right) \\
            &\overset{\textnormal{\eqref{eq:dim-sum}}}{=} \sum_{j\in J} m_{\tilde{\rec}_j} \\
            &\overset{\textnormal{\eqref{eq:dim-filtrate-multiplicity}}}{=} \sum_{j\in J} \dim\left(\filtrate[\res{\rec}_j]<t>^\grid\right) \\
            &\overset{\textnormal{\eqref{eqn:dim-filtrate-injection}}}{=} \sum_{j\in J} \dim\left(\filtrate[\rec_{\iota(j)}]<t>\right)\\
            &\hspace{0.7em} = \hspace{0.7em} \dim\left(\bigoplus_{j\in J}\filtrate[\rec_{\iota(j)}]<t>\right),
        \end{split}
    \end{equation*}
    and conclude by the inclusion $\bigoplus_{j\in J}\filtrate[\rec_{\iota(j)}]<t> \subseteq \basemod_t$ that $\bigoplus_{j\in J}\filtrate[\rec_{\iota(j)}]<t> = \basemod_t$. Finally, we have
    \begin{equation*}
        \basemod_t = \bigoplus_{j\in J}\filtrate[\rec_{\iota(j)}]<t> \subseteq \bigoplus_{\rec:\textnormal{ rectangle}}\filtrate[\rec]<t> \subseteq \basemod_t,
    \end{equation*}
    which concludes the proof of \cref{eqn:cover-pointwise}. Hence the result.
\end{proof}

\subsection{Proof of \Cref{lem:grid}}
\label{sec:grid}
\Cref{lem:grid} is a direct consequence of finite dimensionality, as was \Cref{lem:realization}. We write its proof here for completeness. Since $\basemod$ is pfd, the function $x\in[t_x,+\infty] \mapsto \dim \Ker\rho_t^{(x,t_y)}$ takes a finite number of values $0 = n_0 < n_1 < \dots < n_{\rightindex} < n_{\rightindex+1} = \dim\basemod_t$. This function is also increasing, so fixing $x_0 = t_x$ and $x_{\rightindex+1} = +\infty$, we can find real numbers $x_0 < x_1 < \dots < x_{\rightindex} < x_{\rightindex+1}$ such that $\dim\Ker\rho_t^{(x_i,t_y)} = n_i$ for all $0 \leq i \leq \rightindex+1$. We can define similar real numbers for vertical kernels: $t_y=y_0 < y_1 < \dots < y_{\upperindex} < y_{\upperindex+1} = +\infty$ such that $\dim\Ker\rho_t^{(t_x,y_j)} = \tilde{n}_j$ where $(\tilde{n}_j)_{j\in\libr 0,\upperindex+1\ribr}$ are the distinct dimensions of vertical kernels.

Similarly, since $\basemod$ is pfd, $x\in[-\infty,t_x] \mapsto \dim\Ima\rho^t_{(x,t_y)}$ takes a finite number of values $0 = m_{\leftindex-1} < m_{\leftindex} < \dots < m_{-1} < m_0 = \dim \basemod_t$. This function is also increasing, so fixing $x_{\leftindex-1} = -\infty$ we can find real numbers $x_{\leftindex-1} < x_{\leftindex} < \dots < x_{-1} < x_0 = t_x$ such that $\dim\Ima\rho^t_{(x_i,t_y)} = m_i$ for all $ i\in\libr\leftindex-1,0\ribr$. We can define similar real numbers for vertical images: $-\infty = y_{\lowerindex-1} < y_{\lowerindex} < \dots < y_{-1} < t_y$ such that $\dim\Ima\rho^t_{(t_x,y_j)} = \tilde{m}_j$ where $(\tilde{m}_j)_{j\in\libr \lowerindex-1,0 \ribr}$ are the distinct dimensions of vertical images.

Define finally the finite grid $\grid := \{(x_i,y_j),\,(i,j)\in \libr \leftindex,\rightindex\ribr \times \libr \lowerindex,\upperindex \ribr\}$. It remains to show that this grid satisfies the required properties.

First, \ref{itm:t} comes from $x_0=t_x$ and $y_0 = t_y$.

Second, \ref{itm:orderKer} and \ref{itm:orderIm} are clear from the construction of the grid: spaces associated to indices are ordered by inclusion and they are distinct if indices are distinct because then their dimensions are distinct.

Third, \ref{itm:coverKer} and \ref{itm:coverIm} are also clear from the construction of the grid: every possible horizontal or vertical kernel and image has been represented by an index in the grid.

\subsection{Proof of \Cref{lem:unique-rec}}
\label{sec:linkresbase}
Let $t\in \poset$, let $\grid = (x_i,y_j)_{(i,j)\in \libr \leftindex,\rightindex\ribr\times\libr \lowerindex{},\upperindex{}\ribr}$ be a \lifegrid{t} of $\basemod$ and denote $\gridres := \basemod_{|\grid}$. The proof of \Cref{lem:unique-rec} is postponed to the end of this section. It uses the following three lemmas.

\begin{lem}
    \label{lem:unique-cut-ker}
    To any cut $\rcut[\cutres]$ of $(x_i)_{i\in \libr \leftindex,\rightindex\ribr}$ such that $t_x\in\rcut[\cutres]^-$, one can associate a cut $\rcut$ of $\abscisse$ such that:
    \begin{enumerate}[label=(\roman*)]
        \item\label{itm:t-kernels} $t_x \in \rcut^-$,
        \item\label{itm:equality-kernels} $\Kfilt{\rcut[\cutres]}{\pm}(\gridres) = \Kfilt{\rcut}{\pm}(\basemod)$,
        \item\label{itm:injectivity-kernels} the map $\rcut[\cutres]\mapsto\rcut$ is injective. 
    \end{enumerate}
    A similar result holds for vertical cuts and vertical kernels.
\end{lem}

\begin{proof}
    Any cut $\rcut[\cutres]$ of $(x_i)_{i\in \libr \leftindex,\rightindex\ribr}$ such that $t_x\in\rcut[\cutres]^-$ can be denoted by $\rcut[\cutres]^- = (x_i)_{i\in \libr \leftindex,\recrightindex\ribr}$ with $\recrightindex\in\libr 0,\rightindex\ribr$. This implies

    \begin{align}
        \Kfilt{\rcut[\cutres]}{-}(\gridres) &= \Ker\mor_t^{(x_{\recrightindex},t_y)},\\
        \label{eqn:unique-cut-ker+-grid}\Kfilt{\rcut[\cutres]}{+}(\gridres) &= \Ker\mor_t^{(x_{\recrightindex+1},t_y)}
    \end{align}
    with possibly $x_{\recrightindex+1} = +\infty$. Now, define the cut $\rcut$ of $\abscisse$ by
    \begin{equation}
        \label{eqn:unique-cut-def}
        \begin{split}
            \rcut^- &:= (-\infty,x_{\recrightindex}]\cup \Big\{\normalfont x \in [x_{\recrightindex},x_{\recrightindex+1}),\, \Ker\mor_t^{(x,t_y)} = \Ker\mor_t^{(x_{\recrightindex},t_y)}\Big\},\\
            \rcut^+ &:= \abscisse\setminus\rcut^-,
        \end{split}
    \end{equation}
    and notice that $t_x\in\rcut^-$, hence~\ref{itm:t-kernels}.

    Let us show~\ref{itm:equality-kernels}. By \Cref{lem:realization} applied to $\basemod$, we can find $x\in \rcut^-$ such that $\Kfilt{\rcut}{-}(\basemod) = \Ker\mor_t^{(x,t_y)}$. Since $x_{\recrightindex} \in \rcut^-$, we can even choose $x$ in $\rcut^- \cap [x_{\recrightindex},+\infty)$, which implies by definition of $\rcut$ that $\Ker\mor_t^{(x,t_y)} = \Ker\mor_t^{(x_{\recrightindex},t_y)}$. Hence,
    \begin{equation*}
        \Kfilt{\rcut}{-}(\basemod) = \Ker\mor_t^{(x,t_y)} = \Ker\mor_t^{(x_{\recrightindex},t_y)} = \Kfilt{\rcut[\cutres]}{-}(\gridres).
    \end{equation*}
    Similarly, by \Cref{lem:realization} we can find $x\in \rcut^+\cup\{+\infty\}$ such that $\Kfilt{\rcut}{+}(\basemod) = \Ker\mor_t^{(x,t_y)}$. Since $x \in \rcut^+\cup\{+\infty\}$ and $t_x\leq x_{k_h}\in \rcut^-$, we have:
    \begin{equation}
        \label{eqn:unique-cut-inclusion-small-ker}
        \Ker\mor_t^{(x_{\recrightindex},t_y)} \subsetneq \Ker\mor_t^{(x,t_y)}.
    \end{equation}
    Moreover, since $x_{\recrightindex+1} \in \rcut^+\cup\{+\infty\}$, we can lower $x$ if necessary to choose $x\in \rcut^+\cap (-\infty,x_{\recrightindex+1}]$, and then
    \begin{equation}
        \label{eqn:unique-cut-big-ker}
        \Ker\mor_t^{(x,t_y)} \subseteq \Ker\mor_t^{(x_{\recrightindex+1},t_y)}.
    \end{equation}
    By definition of a \lifegrid{t} (\Cref{lem:grid}~\ref{itm:coverKer}), there exists $i\in\libr 0, \rightindex+1\ribr$ such that $\Ker\mor_t^{(x,t_y)} = \Ker\mor_t^{(x_i,t_y)}$. Therefore, \Cref{lem:grid}~\ref{itm:orderKer} combined with equations \eqref{eqn:unique-cut-inclusion-small-ker} and \eqref{eqn:unique-cut-big-ker} implies:
    \begin{equation}
        \label{eqn:unique-cut-associated-grid-ker}
        \Ker\mor_t^{(x,t_y)} = \Ker\mor_t^{(x_{\recrightindex+1},t_y)},
    \end{equation}
    and it finally follows by \eqref{eqn:unique-cut-ker+-grid} and \eqref{eqn:unique-cut-associated-grid-ker} that
    \begin{equation*}
        \Kfilt{\rcut}{+}(\basemod) = \Ker\mor_t^{(x,t_y)} = \Ker\mor_t^{(x_{\recrightindex+1},t_y)} = \Kfilt{\rcut[\cutres]}{+}(\gridres).
    \end{equation*}

    Let us now show~\ref{itm:injectivity-kernels}. Let $\rcut[\cutres] \ne \rcut[\cutres']$ be two cuts of $\grid$ with $t_x\in\rcut[\cutres]$ and $t_x\in\rcut[\cutres']$. Write
    \begin{equation*}
        \begin{split}
            \rcut[\cutres]^- &= (x_i)_{i\in \libr \leftindex,\recrightindex\ribr},\\
            \rcut[\cutres']^- &= (x_i)_{i\in \libr \leftindex,\recrightindex'\ribr}.
        \end{split}
    \end{equation*}
    Write also $\rcut$ and $\rcut[\cut']$ the respective cuts associated to $\rcut[\cutres]$ and $\rcut[\cutres']$ by the previous construction. Since $\rcut[\cutres] \ne \rcut[\cutres']$, the indices delimiting the cuts must differ: say for instance~$\recrightindex < \recrightindex'$, the other case being similar. Then, it is clear from the definition~\eqref{eqn:unique-cut-def} that $x_{\recrightindex'} \in \rcut[\cut']^- \setminus \rcut^-$, and therefore $\rcut \ne \rcut[\cut']$.

\end{proof}

A similar result holds for images, as shown by the following lemma:
\begin{lem}
    \label{lem:unique-cut-im}
    To any cut $\lcut[\cutres]$ of $(x_i)_{i\in \libr \leftindex,\rightindex\ribr}$ such that $t_x\in\lcut[\cutres]^+$ one can associate a cut $\lcut$ of $\abscisse$ such that:
    \begin{enumerate}[label=(\roman*)]
        \item\label{itm:t-images} $t_x\in\lcut^+$,
        \item\label{itm:equality-images} $\Ifilt{\lcut[\cutres]}{\pm}(\gridres) = \Ifilt{\lcut}{\pm}(\basemod)$,
        \item\label{itm:injectivity-images} the map $\lcut[\cutres]\mapsto\lcut$ is injective. 
    \end{enumerate} 
    A similar result holds for vertical cuts and vertical images.
\end{lem}

\begin{proof}
    Let $\lcut[\cutres]$ be a cut of $(x_i)_{i\in \libr \leftindex,\rightindex\ribr}$ such that $t_x\in\lcut[\cutres]^+$. Write $\lcut[\cutres]^+ = (x_i)_{i\in \libr \recleftindex,\rightindex\ribr}$ with $\recleftindex\in\libr \leftindex,0 \ribr$. This implies
    \begin{equation*}
        \begin{split}
            \Ifilt{\lcut[\cutres]}{+}(\gridres) &= \Ima\mor_{(x_{\recleftindex},t_y)}^t, \\
            \Ifilt{\lcut[\cutres]}{-}(\gridres) &= \Ima\mor_{(x_{\recleftindex-1},t_y)}^t.
        \end{split}
    \end{equation*}
    with possibly $x_{\recleftindex-1} = -\infty$. We can now define the cut $\lcut$ of $\abscisse$ by
    \begin{equation*}
        \begin{split}
            \lcut^+ &:= \Big\{\normalfont x \in (x_{\recleftindex-1},x_{\recleftindex}],\, \Ima\mor_{(x,t_y)}^t = \Ima\mor_{(x_{\recleftindex},t_y)}^t\Big\} \cup [x_{\recleftindex},+\infty),\\
            \lcut^- &:= \abscisse\setminus\lcut^+,
        \end{split}
    \end{equation*}
    and notice that $t_x\in\lcut^+$. The rest of the proof is symmetric to the one for kernels (\Cref{lem:unique-cut-ker}).

\end{proof}

Finally, we describe the horizontal and vertical contributions associated to the rectangle $\rec$ in $\submodule(\basemod)$. Recall that we refer to $\submodule(\basemod)$ simply as $\submodule$. 
\begin{lem}
    \label{lem:filt-submodule}
    For $t\in \rec$, we have: 
    \begin{enumerate}[label=(\roman*)]
        \item\label{itm:Ifilt-submodule} $\Ifilt{\lcut}{\pm}(\submodule) = \Ifilt{\lcut}{\pm}(\basemod) \cap \submodule<t>$,
        \item\label{itm:Kfilt-submodule} $\Kfilt{\rcut}{\pm}(\submodule) = \Kfilt{\rcut}{\pm}(\basemod) \cap \submodule<t>$,
    \end{enumerate}
    and similar statements for vertical contributions.
\end{lem}
\begin{proof}
    In this proof we write~$\tilde{\mor}_u^v = \mor_{u|\submodule<u>}^v$ for any~$u\leq v \in \poset$. Let us first show~\ref{itm:Ifilt-submodule}. We prove the result on $\Ifilt{\lcut}{-}(\submodule)$, the other one is similar. By \Cref{rk:realization}, one can find~$x\in \lcut^-\cup\{-\infty\}$ such that:
    \begin{equation*}
        \begin{split}
        \Ifilt{\lcut}{-}\left(\basemod\right) &= \Ima\mor_{(x, t_y)}^t,\\
        \Ifilt{\lcut}{-}\left(\submodule\right) &= \Ima\tilde{\mor}_{(x, t_y)}^t = \mor_{(x, t_y)}^t\left(\submodule<(x, t_y)>\right).
        \end{split}
    \end{equation*}
    Denote~$a = (x, t_y)$. Then, one has:
    \begin{equation*}
        \Ifilt{\lcut}{-}\left(\submodule\right) = \mor_{a}^t\left(\submodule<a>\right) \subseteq \Ima\mor_{a}^t = \Ifilt{\lcut}{-}\left(\basemod\right).
    \end{equation*}
    Since also~$\mor_{a}^t\left(\submodule<a>\right) \subseteq \submodule<t>$, one has~$\Ifilt{\lcut}{-}(\submodule) \subseteq \Ifilt{\lcut}{-}(\basemod) \cap \submodule<t>$.

    Now, let~$z\in \Ifilt{\lcut}{-}\left(\basemod\right)\cap\submodule<t>  = \Ima\mor_{a}^t \cap \submodule<t>$. There is~$z_{a}\in \basemod_{a}$ such that~$z = \mor_{a}^t(z_{a})$. 
    %
    Applying \Cref{lem:realization} at the point $t$ and at the point $a$, one can choose low enough~$x'\in\rcut^+\cup\{+\infty\}$ and~$y\in\tcut^+\cup\{+\infty\}$ such that:
    \begin{equation*}
        \begin{split}
            \submodule<t> &= \Ker\mor_t^{(t_x,y)} \cap \Ker\mor_t^{(x', t_y)},\\
            \submodule<a> &= \Ker\mor_{a}^{(x, y)} \cap \Ker\mor_{a}^{(x', t_y)}.
        \end{split}
    \end{equation*}
    Denote~$c = (t_x, y)$,~$d = (x',t_y)$, and $b = (x, y)$. The following diagram will help picturing the various spaces involved in this proof.
    \begin{equation*}
        \begin{tikzcd}
            {\basemod_{b}} \arrow[r]           & {\basemod_{c}}         &                     \\
            {\basemod_{a}} \arrow[u] \arrow[r] & \basemod_{t} \arrow[u] \arrow[r]         & {\basemod_{d}}
            \end{tikzcd}
    \end{equation*}
    Since~$z \in\submodule<t>\subseteq \Ker\mor_t^c$, one has~$\mor_{a}^c(z_{a}) = \mor_t^c(z) = 0$, i.e.~$z_{a}\in \Ker\mor_{a}^c$. Thus, by weak exactness of~$\basemod$ and \Cref{rk:extension-weak-exactness}, there exist~$z'\in \Ker\mor_{a}^{b}$ and~$z'' \in \Ker\mor_{a}^t$ such that~$z_{a} = z' + z''$. Moreover, since~$z \in\submodule<t>\subseteq \Ker\mor_t^d$, one has~$\mor_{a}^d(z_{a}) = \mor_t^d(z) = 0$, i.e.~$z_{a}\in \Ker\mor_{a}^d$. Then also~$z' = z_{a} - z'' \in \Ker\mor_{a}^d$. Hence~$z'\in \submodule<a>$ and
    \begin{equation*}
        z = \mor_{a}^t(z_{a}) =  \mor_{a}^t(z') \in \mor_{a}^t\left(\submodule<a>\right).
    \end{equation*}

    Let us now show~\ref{itm:Kfilt-submodule}. 
    By \Cref{rk:realization}, there are~$x_\pm\in\rcut^+\cup\{+\infty\}$ such that:
    \begin{equation*}
        \begin{split}
            \Kfilt{\rcut}{\pm}\left(\basemod\right) &= \Ker\mor_t^{(x_\pm,t_y)},\\
            \Kfilt{\rcut}{\pm}\left(\submodule\right) &= \Ker\tilde{\mor}_t^{(x_\pm,t_y)}. 
        \end{split}
    \end{equation*}
    Hence, we get:
    \begin{equation*}
        \Kfilt{\rcut}{\pm}\left(\submodule\right)
        = \Ker\mor_t^{(x_\pm,t_y)}\cap\submodule<t> = \Kfilt{\rcut}{\pm}\left(\basemod\right) \cap \submodule<t>.
    \end{equation*}
\end{proof}

\begin{proof}[Proof of \Cref{lem:unique-rec}.]
    For each cut composing $\recres$ (namely  $\lcut[\cutres]$, $\rcut[\cutres]$, $\bcut[\cutres]$ and $\tcut[\cutres]$), we can apply \Cref{lem:unique-cut-ker} and \Cref{lem:unique-cut-im} to find a cut (respectively $\lcut$, $\rcut$, $\bcut$ and $\tcut$) satisfying the corresponding wanted equality on kernels or images. Considering the rectangle $\rec = (\lcut^+ \cap \rcut^-) \times (\bcut^+ \cap \tcut^-)$ of $\poset$ yields~\ref{item:t-in-rec},~\ref{itm:equality-ker-im} and~\ref{itm:injectivity-ker-im}. For the equality of dimensions of the rectangle filtrates, note that:
    \begin{equation*}
        \submodule[\recres]<t>\left(\gridres\right) = \submodule[\rec]<t>(\basemod).
    \end{equation*} 
    Moreover, the computations of the filtrations of $\submodule[\recres](\gridres)$ and $\submodule(\basemod)$ (\Cref{lem:filt-submodule}) combined with~\ref{itm:equality-ker-im} implies:
    \begin{equation*}
        \filt[\recres]{\pm}<t>\left(\submodule[\recres]\left(\gridres\right)\right) = \filt[\rec]{\pm}<t>\left(\submodule[\rec](\basemod)\right).
    \end{equation*}
    Hence $\dim \filtrate[\rec] = \dim \filtrate[\res{\rec}]^\grid$.
    
\end{proof}


\section{Negative results}
\label{sec:negative-answers}
\subsection{Proof of \Cref{thm:negative-int-subgrids}.}

Given an integer~$m\geq 2$, consider the following persistence module over the poset $\libr 1, m+1\ribr^2$, where~$\iota_i: \field\hookrightarrow\field^m$ denotes the injection into the~$i$-th axis of~$\field^m$, and  $\delta_m: t\in\field\mapsto (t,\dots,t)\in\field^m$ denotes the injection into the diagonal:
\begin{equation}
    \label{eqn:def-indec-grid}
    \indecgrid{m} :=
    \begin{tikzcd}
        \field \arrow[r, "\iota_1"]            & \field^m \arrow[r]                               & \field^m \arrow[r, dashed]           & \field^m \arrow[r]                               & \field^m                     \\
        0 \arrow[u, dotted] \arrow[r, dotted]  & \field \arrow[u, "\iota_2"] \arrow[r, "\iota_2"] & \field^m \arrow[u] \arrow[r, dashed] & \field^m \arrow[r] \arrow[u]                     & \field^m \arrow[u]           \\
                                               &                                                  & \ddots                               & \field^m \arrow[u, dashed] \arrow[r]             & \field^m \arrow[u, dashed]   \\
        0 \arrow[r, dotted] \arrow[uu, dotted] & 0 \arrow[uu, dotted] \arrow[rr, dotted]          &                                      & \field \arrow[r, "\iota_m"] \arrow[u, "\iota_m"] & \field^m \arrow[u]           \\
        0 \arrow[u, dotted] \arrow[r, dotted]  & 0 \arrow[u, dotted] \arrow[rr, dotted]           &                                      & 0 \arrow[u, dotted] \arrow[r, dotted]            & \field \arrow[u, "\delta_m"]
        \end{tikzcd}
\end{equation}
This persistence module was proven in~\cite{botnan2020rectangledecomposable} to have the following properties: 
\begin{prop}[\cite{botnan2020rectangledecomposable}]
    \label{prop:counter-example-grid}
    For $m\geq 2$, the persistence module~$\indecgrid{m}$ satisfies:
    \begin{enumerate}[label=(\roman*)]
        \item\label{itm:indecomposable}~$\indecgrid{m}$ is indecomposable with local endomorphism ring, in particular it is not interval-decomposable;
        \item\label{itm:minimal} for any strict subgrid~$X'\times Y'\subsetneq \libr 1,m+1\ribr^2$, the restriction~$\indecgrid{m}_{|X'\times Y'}$ belongs to~$\decclass{\intervals[X'\times Y']}$.
    \end{enumerate}
\end{prop}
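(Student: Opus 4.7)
The proof splits into the two claims.

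For (i), the plan is to compute the endomorphism ring $\End(\indecgrid{m})$ and show it is isomorphic to $\field$. An endomorphism $\phi$ acts as a scalar $\lambda_k$ at each anti-diagonal $\field$ at $(k, m+2-k)$ for $k=1,\dots,m$, and as a scalar $\mu$ at $(m+1,1)$. Since all internal maps between the $\field^m$'s in the upper-triangular region $\{(x,y):x+y>m+2\}$ are identities, commutativity forces $\phi$ to be represented by the same matrix $\Phi$ at every upper-triangular vertex. Commutation with each map $\iota_j$ gives $\Phi e_j=\lambda_j e_j$, so $\Phi$ is diagonal in the standard basis of $\field^m$ with eigenvalues $\lambda_1,\dots,\lambda_m$. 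Commutation with $\delta_m$ then gives $\Phi(\sum_i e_i)=\mu\sum_i e_i$, which combined with the diagonal form forces $\lambda_1=\cdots=\lambda_m=\mu$. So $\phi$ is scalar multiplication by a single element of $\field$, giving $\End(\indecgrid{m})\cong\field$, which is local, and hence $\indecgrid{m}$ is indecomposable. Since it takes the value $\field^m$ with $m\geq 2$ at some vertex, it cannot be an interval module, and by indecomposability it is not interval-decomposable.

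For (ii), the first step is to reduce to the case where exactly one row or one column is removed. The restriction of an interval module $\indimod[I]$ to a sub-poset $P'$ decomposes as the direct sum of the indicator modules of the connected components of $I\cap P'$, each of which is an interval of $P'$; consequently, interval-decomposability is inherited by further restriction. Hence if $\indecgrid{m}_{|X''\times Y''}$ belongs to $\decclass{\intervals[X''\times Y'']}$ then so does $\indecgrid{m}_{|X'\times Y'}$ for $X'\subseteq X''$, $Y'\subseteq Y''$. It therefore suffices to exhibit interval decompositions in the two elementary cases: removing a single column $\{k\}$ and removing a single row $\{\ell\}$, with each case treated separately since the presence of $\delta_m$ breaks the $x\leftrightarrow y$ symmetry of the module.

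In each elementary case, the plan is to exhibit an explicit interval direct-sum decomposition with one summand per surviving ``atom''. For column removal, for each $j\in\libr 1,m\ribr\setminus\{k\}$ one builds a summand along the $e_j$-axis with generator at $(j,m+2-j)$, and if $k\neq m+1$ one builds a summand along the diagonal vector $\sum_i e_i$ with generator at $(m+1,1)$. The ``free'' dimensions at upper-triangular positions $(x,y)$ where the atoms inherited from below do not yet span $\field^m$ are then absorbed into the above summands by enlarging their supports to include these positions as additional minimal elements. The crucial combinatorial fact is that removing a single column breaks the unique linear relation $\sum_{j=1}^m e_j=\sum_i e_i$ holding among the $m+1$ original contributors at $(m+1,m+1)$, so the $m$ remaining contributors constitute a basis of $\field^m$ at that vertex, and analogously at every other upper-triangular vertex of the restricted grid.

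The main obstacle is the verification step: one must check that the enlarged supports remain connected and convex in the restricted grid (a hole created by an interior column or row is potentially problematic for convexity), and that the chosen one-dimensional subspaces at each vertex are in direct sum and together span the module. Both reduce to the basis property above, together with a case analysis depending on the location of the removed row or column relative to the anti-diagonal. The full verification is carried out in \cite{botnan2020rectangledecomposable}, and we simply appeal to it.
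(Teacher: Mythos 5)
The paper itself does not prove this proposition; it imports it from~\cite{botnan2020rectangledecomposable} and offers only the heuristic explanation that $\indecgrid{m}$ is the image under a grid embedding of an indecomposable representation of the quiver~$\dart{m}$. Your proposal therefore goes further than the paper does, at least for part~(i), and your route is different from the intuition the paper alludes to.

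Your argument for~(i) is correct and self-contained. Because all internal maps between $\field^m$-slots in the region $\{(x,y): x+y\geq m+3\}$ are identities and that region is connected in the grid, an endomorphism is represented by a single matrix $\Phi$ there. Commuting with each $\iota_j$ forces $\Phi e_j=\lambda_j e_j$, hence $\Phi$ is diagonal; commuting with $\delta_m$ forces $\Phi(\sum_i e_i)=\mu\sum_i e_i$, which pins all $\lambda_j$ to $\mu$. So $\End(\indecgrid{m})\cong\field$, which is local, giving indecomposability; non-interval-decomposability follows since some pointwise dimension equals $m\geq 2$. This is a more explicit argument than the paper's ``embedded image of an indecomposable of $\dart{m}$'' remark, which would still require the observation that left Kan extension along a fully faithful poset embedding preserves indecomposability and endomorphism rings.

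For~(ii), your reduction to removing a single row or column (via the fact that restriction of an interval-decomposable module is interval-decomposable, \Cref{rk:direct-sense-true}) is sound, and the key linear-algebraic observation --- that deleting any one of the $m+1$ contributors $e_1,\dots,e_m,\sum_i e_i$ at a top-right vertex breaks the unique relation and leaves a basis of $\field^m$ --- is precisely what drives the decomposition. You stop short of exhibiting the interval summands and verifying that they span and are in direct sum, deferring instead to~\cite{botnan2020rectangledecomposable}. Since the paper likewise simply cites that reference for the whole proposition, your level of rigor for~(ii) matches the paper's. A minor quibble: the worry about ``a hole created by an interior column'' is immaterial, since removing a column from a grid product still yields a product of totally ordered sets, so intervals in the restricted poset are just intervals of the smaller grid --- there is nothing exotic about convexity there.
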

These two properties follow intuitively from the fact that $\indecgrid{m}$ is the embedded image of the following indecomposable representation of the quiver $\dart{m}$ into the grid $\libr 1,m+1\ribr^2$:
\begin{equation*}
    \begin{tikzcd}[sep=tiny]
    \field \arrow[rrrr, "\iota_1"] &                                 &        &                                & \field^m                         \\
                        & \field \arrow[rrru, "\iota_2"'] &        &                                &                                  \\
                        &                                 & \ddots &                                &                                  \\
                        &                                 &        & \field \arrow[ruuu, "\iota_m"] &                                  \\
                        &                                 &        &                                & \field \arrow[uuuu, "\delta_m"']
    \end{tikzcd}
\end{equation*}

Suppose now that~$\poset$ is a product of two totally ordered sets such that~$|X| \geq 3$ and~$|Y| \geq 3$, and let~$\maxsize$ be an integer such that~$2 \leq \maxsize < \min(|X|,|Y|)$. Note that~$\subgrids_\maxsize(\poset)$, the set of grids of size at most~$\maxsize \times \maxsize$ included in~$\poset$, is a subset of~$\fullsubpos[\poset]\setminus \{\poset\}$.
In this setting, there are poset inclusions~$\libr 1,\maxsize+1\ribr \hookrightarrow X$ and~$\libr 1,\maxsize+1\ribr\hookrightarrow Y$, and we can consider their product~$\psi:\libr 1,\maxsize+1\ribr^2 \hookrightarrow \poset$. We extend the indecomposable module~$\indecgrid{\maxsize}$ from~\eqref{eqn:def-indec-grid} to a persistence module over~$\poset$ by taking its left Kan extension~$\basemod$ along~$\psi$. The resulting persistence module is simply a ``ceiling" modules~\cite[Sec.~2.5]{Botnan2016}. Specifically, for all~$t\in\poset$ we have:
\begin{equation}\label{eqn:characterization-lan}
    \basemod_t = \colim \indecgrid{\maxsize}_{|\psi_{\leq t}} \simeq
        \begin{cases}
            \indecgrid{\maxsize}_{\max(\psi_{\leq t})} & \mbox{ if } \psi_{\leq t} \ne\emptyset, \\[0.5em]
            0 &\mbox{ otherwise},
        \end{cases}
\end{equation}
where~$\psi_{\leq t}$ denotes the downset~$\{u\in\libr 1,\maxsize+1\ribr^2 \mid \psi(u) \leq t\}$. Similarly, the internal morphisms of $m$ are either trivial, or they correspond to internal morphisms $N^m$. 

%
From \Cref{prop:counter-example-grid} (i) it follows that $M$ is not interval-decomposable. Indeed, if it were interval-decomposable, then its restriction to $\libr 1,\maxsize+1\ribr^2$ would be as well. However, this restriction is precisely $N^m$, contradicting that $N^m$ is not interval-decomposable. It is not hard to check that $\basemod_{|X'\times Y'} \in \decclass{\intervals[X'\times Y']}$ for any grid~$X'\times Y'\in \subgrids_m(\poset)$. 

%

\subsection{Proof of \Cref{thm:negative-squares-hook}}
\label{sec:adding-hook} 

We will identify an interval in~$\indecsupports$ that is not a rectangle (\Cref{sec:setting-hook}), then construct a persistence module~$M$ over~$\poset$ from this interval (\Cref{sec:def-counter-example-hook}), and finally prove in \Cref{sec:proof-hook} that $M$ is not interval-decomposable despite satisfying~$M_{|Q} \in\decclass{\indecsupports_{|Q}}$ for every square~$Q\in\squares[\poset]$. 

\subsubsection{Intervals of a square}
\label{sec:def-square-notations}
For~$s\leq t$ in~$\poset$, let~$a := s$, $b := (s_x,t_y)$, $c := (t_x,s_y)$ and~$d := t$. In other words, $\squarepos{s}{t}$ is precisely the square~$\{a,b,c,d\}$ of~$\poset$. The set of intervals of~$\squarepos{s}{t}$ is then:
\begin{equation*}
    \intervals[\squarepos{s}{t}]=\{\{a\},\{b\}, \{c\}, \{d\},\{a,b\},\{a,c\},\{b,d\},\{c,d\},\{a,b,c\},\{b,c,d\},\{a,b,c,d\}\}.
\end{equation*}
Of these intervals, two are not rectangles: the \emph{bottom hook} and \emph{top hook}:
\begin{equation*}
        \hook_1(\squarepos{s}{t}) = \{a,b,c\}, \hspace{0.2\linewidth} \hook_2(\squarepos{s}{t}) = \{b,c,d\}.
\end{equation*}
Hence, for a square~$Q$ of~$\poset$, the condition~$\indecsupports_{|Q} \supsetneq \rectangles[Q]$ says precisely that~$\hook_1(Q)\in\indecsupports_{|Q}$ or~$\hook_2(Q)\in\indecsupports_{|Q}$.

\subsubsection{An interval in $\indecsupports$ that is not a rectangle}
\label{sec:setting-hook}
Assume that~$|X| \geq 2$ and~$|Y| \geq 2$, and that~$(|X|, |Y|) \neq (2, 2)$. Without loss of generality, one can assume that~$|X| \geq 3$ and~$|Y| \geq 2$. In that case, there exists~$x_1 < x_2 < x_3$ in~$X$ and~$y_1 < y_2$ in~$Y$ such that:
\begin{equation*}
    G := \{(x_i,y_j)\}_{(i,j)\in\{1,2,3\}\times\{1,2\}}\subseteq \poset.    
\end{equation*}

\medskip

Let~$\indecsupports \subseteq \intervals[\poset]$ be such that~$\indecsupports_{|Q} \supsetneq \rectangles[Q]$ for all~$Q\in\squares[\poset]$. Denoting~$Q_0 := \squarepos{(x_1,y_1)}{(x_3,y_2)}$ the outermost square of~$G$, we have~$\indecsupports_{|Q_0} \supsetneq \rectangles[Q_0]$. Therefore, either~$\hook_1(Q_0)\in \indecsupports_{|Q_0}$ or~$\hook_2(Q_0) \in \indecsupports_{|Q_0}$. One can assume without loss of generality that~$\hook_2(Q_0)\in \indecsupports_{|Q_0}$, the other case being dual. By definition of~$\indecsupports_{|Q_0}$, there is some interval~$S \in \indecsupports$ such that~$\hook_2(Q_0) = S\cap Q_0$. In particular, we have~$(x_1,y_1)\notin S$ while~$(x_1,y_2)$, $(x_3,y_1)$ and~$(x_3,y_2)$ are in~$S$, thus $S$ is not a rectangle.

\subsubsection{Building the counter-example}
\label{sec:def-counter-example-hook}
Consider the following partition of~$\hullconvex(G)$, which is the convex hull of~$G$ in~$\poset$ (i.e. the set of points~$z\in \poset$ such that~$(x_1,y_1) \leq z \leq (x_3, y_2)$):
\begin{equation}
    \label{eq:def-partition}
    \begin{split}
        \partconv_1 :=&\; (\{x_1\}\times [y_1,y_2]) \cap S ,\\
        \partconv_0 :=&\; (\{x_1\}\times [y_1,y_2]) \setminus \partconv_1 ,\\
        \partconv_3 :=&\; ((x_1,x_3)\times (y_1,y_2]) \cap S ,\\
        \partconv_2 :=&\; ((x_1,x_3)\times [y_1,y_2]) \setminus \partconv_3,\\
        \partconv_4 :=&\; \{x_3\} \times [y_1,y_2].\\
    \end{split}
\end{equation}
\begin{figure}[t]
    \centering
    \includegraphics[scale=0.7]{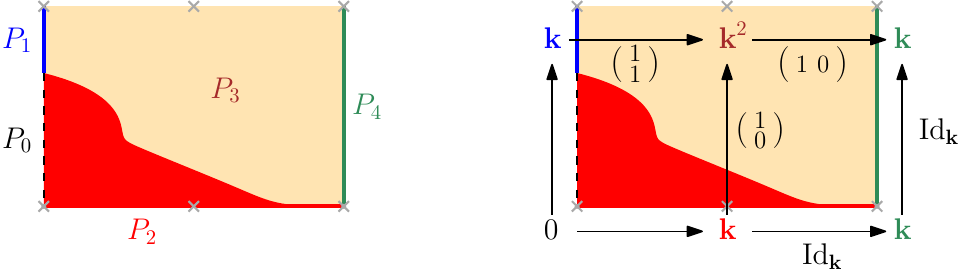}
    \caption{A graphical representation of the partition of the convex hull~$\hullconvex(G)$~(left), superimposed with its associated module~$M$~(right). The regions~$\partconv_0$, $\partconv_1$, $\partconv_2$, $\partconv_3$, $\partconv_4$ of~$\hullconvex(G)$ are represented respectively by the black dashed line segment, the blue segment, the red region (including the bottom red segment), the orange region and the green segment. The nodes of the grid~$G$ are represented as gray crosses.}
    \label{fig:counter-example_squares}
\end{figure}
\begin{table}[t]
  \centering
  \begin{tabular}{|c||*{5}{c|}}
      \hline
      \diagbox{$s\in$}{$t\in$} &~$\partconv_0$ &~$\partconv_1$ &~$\partconv_2$ &~$\partconv_3$ &~$\partconv_4$ \\ 
      \hline \hline 
     ~$\partconv_0$ &~$\leq$ &~$\leq$ &~$\square$ &~$\square$ &~$\square$ \\
      \hline
     ~$\partconv_1$ & &~$\leq$ & &~$\square$ &~$\square$ \\ 
      \hline
     ~$\partconv_2$ & & &~$\square$ &~$\square$ &~$\square$ \\ 
      \hline
     ~$\partconv_3$ & & & &~$\square$ &~$\square$\\ 
      \hline
     ~$\partconv_4$ & & & & &~$\leq$ \\
      \hline
  \end{tabular}
  \vspace{2ex}
  \caption{Summary of the comparability of the sets partitioning~$\hullconvex(G)$ defined in~\eqref{eq:def-partition}. For~$0 \leq i,j \leq 4$, an empty cell indicates that there is no~$s\in\partconv_i$ and~$t\in\partconv_j$ such that~$s\leq t$. On the contrary, a symbol~$\leq$ indicates that there is such~$s$ and~$t$, and a symbol~$\square$ refines this last case by indicating that such~$s$ and~$t$ can in addition (though it is not necessary) satisfy~$s_x < t_x$ and~$s_y < t_y$, or in other words that there exists a non-degenerate square of~$\poset$ with bottom-left corner in~$\partconv_i$ and top-right corner in~$\partconv_j$. The correctness of this table is clear.}
  \label{tab:partition}
\end{table}
See \Cref{fig:counter-example_squares}~(left) for a graphical representation of this partition, and \Cref{tab:partition} for a  summary of the comparability of the various sets in the  partition. Consider the subposet~$\sigmapos$ of~$\poset$ defined as follows:
\begin{equation}
    \label{eq:def-sigmapos}
    \sigmapos := \{(x_1,y_1), (x_1,y_2), (x_2,y_1), (x_2,y_2), (x_3,y_2)\} = G\setminus\{(x_3,y_1)\},
\end{equation}
whose Hasse diagram is:
\begin{equation*}
    \begin{tikzcd}[arrows=-stealth]
        (x_1,y_2) \arrow[r]           & (x_2,y_2) \arrow[r]            & (x_3,y_2) \\
        (x_1,y_1) \arrow[u] \arrow[r] & (x_2,y_1) \arrow[u] &  
    \end{tikzcd},
\end{equation*}
and define a persistence module~$\widetilde{M}$ over~$P$ by the following diagram:
\begin{equation*}
    \begin{tikzcd}[arrows=-stealth, ampersand replacement=\&]
        \field \arrow[r, "\mymatrix{1\\1}"]      \& \field^2 \arrow[r, "\mymatrix{1 & 0}"]          \& \field \\
        0 \arrow[u] \arrow[r] \& \field \arrow[u, "\mymatrix{1\\0}"] \&       
    \end{tikzcd}.
\end{equation*}
For any~$t\in\hullconvex(G)$, call~$\pi(t)$ the unique~$i\in\libr 0,4\ribr$ such that~$t\in\partconv_i$. A direct inspection --- eased by \Cref{tab:partition} --- yields that~$\pi : \hullconvex(G) \to \sigmapos$ is a poset morphism. Therefore, one can define the persistence module~$M$ over~$\hullconvex(G)$ as the pullback of~$\widetilde{M}$ along~$\pi$. In other words, for any~$s\leq t$ in~$\hullconvex(G)$, one has:
\begin{equation}
    \begin{split}
        M_t &:= \widetilde{M}_{\pi(t)},\\
        M(s\leq t) &:= \widetilde{M}(\pi(s)\leq\pi(t)).       
    \end{split}
\end{equation}
We consider in fact the extension of~$M$ to~$\poset$, still denoted by~$M$, with internal spaces set to be zero outside~$\hullconvex(G)$ and its internal morphisms to be the obvious ones. See \Cref{fig:counter-example_squares}~(right) for a graphical representation of~$M$.
\Cref{thm:negative-squares-hook} follows from the next proposition:
\begin{prop}
    \label{prop:counter-example-hook}
    The persistence module~$\basemod$ satisfies:
    \begin{enumerate}[label=(\roman*)]
        \item~$\basemod$ is not interval-decomposable;
        \item~$\basemod_{|Q} \in \decclass{\indecsupports_{|Q}}$ for any square~$Q\in \squares[\poset]$.
    \end{enumerate}
\end{prop}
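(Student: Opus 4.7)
The plan is to follow the scheme used in the proof of \Cref{prop:counter-example-grid} in \cite{botnan2020rectangledecomposable}, treating~(i) and~(ii) in turn. For~(i), I would first verify that the base module $\widetilde M$ is indecomposable as a representation of $\sigmapos$. Observing that the space at $(x_1,y_1)$ is zero, the module $\widetilde M$ is effectively a representation of a $D_4$-type quiver with central vertex $(x_2,y_2)$ and three leaves $(x_1,y_2)$, $(x_2,y_1)$, $(x_3,y_2)$, with $\field$ at each leaf and $\field^2$ at the center. The three distinguished 1-dimensional subspaces of $\widetilde M_{(x_2,y_2)}=\field^2$---namely the images $\Span\{(1,1)\}$ from $(x_1,y_2)$ and $\Span\{(1,0)\}$ from $(x_2,y_1)$, and the kernel $\Span\{(0,1)\}$ of the map to $(x_3,y_2)$---are pairwise distinct, which forces any decomposition to be trivial by the standard subspace-counting argument (a 1-dimensional summand of $\field^2$ cannot accommodate three distinct lines). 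The same argument applies verbatim to $M|_G$ (the extra point $(x_3,y_1)$ contributes no new distinguished line at $(x_2,y_2)$), showing that $M|_G$ is itself indecomposable. Since $\dim M_{(x_2,y_2)}=2$ exceeds the pointwise dimensions allowed in an interval module, and an interval-decomposable $M$ would restrict to $M|_G=\bigoplus_i\field_{I_i\cap G}$ (with each summand further decomposing into interval modules of $G$ via connected components), this indecomposability forces $M$ not to be interval-decomposable.

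For~(ii), the plan is to proceed by case analysis on the positions of the four corners of $Q\in\squares[\poset]$ with respect to the partition $\partconv_0,\partconv_1,\partconv_2,\partconv_3,\partconv_4$ of $\hullconvex(G)$ and its complement in $\poset$. \Cref{tab:partition} restricts the feasible corner configurations to those compatible with the product order. For each feasible configuration, a direct computation from the pullback description of $M$ will show that $M|_Q$ either is weakly exact---hence rectangle-decomposable by the finite version \Cref{thm:finite-case}, so $M|_Q\in\decclass{\rectangles[Q]}\subseteq\decclass{\indecsupports_{|Q}}$---or else decomposes as a direct sum of rectangle modules together with a single copy of $\field_{S\cap Q}$, where $S\cap Q$ coincides with the top hook $\hook_2(Q)$ of the square. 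In this latter case, $S\cap Q\in\indecsupports_{|Q}$ by definition, since $S\in\indecsupports$.

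The main obstacle is the bookkeeping in part~(ii): several sub-cases arise when corners of $Q$ straddle $\hullconvex(G)$ and its complement, or when $\partconv_2$ contains corners both in and outside $S$. A useful simplifying observation is that $S\cap(\{x_1\}\times[y_1,y_2])$ is a top segment of this chain, since $S$ is convex and contains $(x_1,y_2)$ but excludes $(x_1,y_1)$; this rules out configurations that would yield a bottom hook $\hook_1(Q)$ and ensures that whenever a hook summand does appear in the decomposition of $M|_Q$, it is of type $\hook_2$ and its support matches $S\cap Q$ exactly.
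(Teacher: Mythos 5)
Your proposal is correct and follows essentially the same strategy as the paper's proof, with a mild variant for part~(i). The paper argues directly that $\End(M)\cong\field$: it first shows, via the naturality of endomorphisms over the identity internal maps, that any $\theta\in\End(M)$ is constant on each part $\partconv_i$ and vanishes outside $\hullconvex(G)$, so it is determined by its values on $\sigmapos$, reducing the computation to $\End(\widetilde M)\cong\field$. You instead show that the \emph{finite} restriction $M_{|G}$ is indecomposable via the three-distinct-lines ($D_4$-type) argument at the $\field^2$ vertex, and then observe that interval-decomposability of $M$ would force $M_{|G}$ to split into $\geq 2$ nonzero summands because $\dim M_{(x_2,y_2)}=2$. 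This avoids the "$\theta$ is constant on each $\partconv_i$" step and stays entirely within a finite grid, which is a legitimate and arguably slightly more self-contained route; the trade-off is that the paper's argument gives the stronger conclusion that $M$ itself is indecomposable, not merely non-interval-decomposable. For part~(ii), your plan matches the paper's: the same case analysis over the $\square$ entries of Table~\ref{tab:partition}, with the observation that every hook summand that arises is $\hook_2(Q)$ and that $S\cap Q=\hook_2(Q)$ by checking $s\notin S$ while $b,c,d\in S$ (the paper also uses $\partconv_4\subseteq S$ by convexity, which your "$S\cap(\{x_1\}\times[y_1,y_2])$ is a top segment" observation does not cover alone; you would additionally need the $s\in\partconv_2$ subcase argument from the paper, but you have flagged the bookkeeping correctly).
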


\subsection{Proof of \Cref{prop:counter-example-hook}}
\label{sec:proof-hook}
    We first show that~$M$ is not interval-decomposable. Let~$\theta\in\End(M)$.  For~$s\leq t$ in~$\hullconvex(G)$ such that~$\pi(s) = \pi(t)$, i.e located in the same set~$\partconv_i$, we have that $M(s\leq t)=\identity_{M_t}$ by definition, so the naturality of~$\theta$ yields a commutative square:
    \begin{equation*}
        \begin{tikzcd}[arrows=-stealth]
            M_s \arrow[r, "\identity"] \arrow[d, "\theta_s"'] & M_t \arrow[d, "\theta_t"] \\
            M_s \arrow[r, "\identity"]                                  & M_t
            \end{tikzcd},
    \end{equation*}
    and~$\theta_s = \theta_t$ in that case. 
    Moreover, since~$M$ vanishes outside~$\hullconvex(G)$, so does~$\theta$. Thus, any~$\theta\in\End(M)$ is entirely determined by its values on the subposet~$P$ of~$\poset$ defined by~\eqref{eq:def-sigmapos}. Since~$M_{|\sigmapos}$ is isomorphic to~$\widetilde{M}$, which has an endomorphism ring isomorphic to~$\field$ by a direct verification, the persistence bimodule~$M$ itself has endomorphism ring isomorphic to~$\field$, which is local, hence~$M$ is indecomposable. Since it is not of pointwise dimension $0$ or~$1$ either, it is not interval-decomposable.
    
    \bigskip

    We now prove that the restriction~$M_{|Q}$ to any square~$Q$ of~$\poset$ belongs to~$\decclass{\indecsupports_{|Q}}$. By hypothesis, we have~$\indecsupports_{|Q}\supseteq \rectangles(Q)$, so for~$M_{|Q}$ to belong to~$\decclass{\indecsupports_{|Q}}$ it is sufficient (though not necessary) that~$M_{|Q}$ be rectangle-decomposable.  Note also that~$Q$ can be written as~$Q = \squarepos{s}{t}$, for two points~$s$ and~$t$ in~$\poset$. Since degenerate squares yield 1-parameter persistence modules, which are known to be interval-decomposable, we are left with the case where~$s_x < t_x$ and~$s_y < t_y$.

    Assume first that~$s\not\in\hullconvex(G)$ or~$t\not\in\hullconvex(G)$. We claim that~$M_{|\squarepos{s}{t}}$ is rectangle-decomposable in this case. Indeed, as any other pfd representation of the square, $M_{|\squarepos{s}{t}}$ is interval-decomposable, and it is then sufficient to prove that the interval summands of~$M_{|\squarepos{s}{t}}$ cannot be hooks. Assuming without loss of generality that~$t\not\in\hullconvex(G)$ (the other case being similar), we have that at least one point among~$(s_x,t_y)$ and~$(t_x,s_y)$ does not belong to~$\hullconvex(G)$, for otherwise we would have~$x_1 \leq t_x \leq x_3$ and~$y_1 \leq t_y \leq y_2$ hence~$t\in\hullconvex(G)$. Thus, $M_{|\squarepos{s}{t}}$ has at least two zero internal spaces, which implies that its interval summands cannot be hooks. This proves our claim, and so~$M_{|\squarepos{s}{t}}\in\decclass{\indecsupports_{|\squarepos{s}{t}}}$.

    Assume now that both~$s$ and~$t$ are in~$\hullconvex(G)$. Several cases are to be considered, corresponding to the cells containing the symbol~$\square$ in \Cref{tab:partition}:

    \medskip
    
    \paragraph{Case~$s\in\partconv_0$.}
    \begin{itemize}
        \item If~$(s_x,t_y)\in \partconv_0$, then~$M_s = M_{(s_x,t_y)}=0$ and no hooks can appear in the interval-decomposition of~$M_{|\squarepos{s}{t}}$, which is therefore rectangle-decomposable.
        \item If~$(s_x,t_y)\in \partconv_1$, then~$M_{|\squarepos{s}{t}}$ is of one of the three forms:
        \begin{equation*}
            \begin{gathered}
                \begin{array}{lll}
                    \squarediag{0}{\field}{\field}{\field^2}{}{}{\mymatrix{1\\1}}{\mymatrix{1\\0}}
                    \quad&
                    \squarediag{0}{\field^2}{\field}{\field^2}{}{}{\mymatrix{1\\1}}{\identity_{\field^2}}
                    \quad&
                    \squarediag{0}{\field}{\field}{\field}{}{}{\identity_\field}{\identity_\field}
                \end{array}
            \end{gathered}
        \end{equation*}
        which happen when~$t\in\partconv_3$ for the first two with~$(t_x,s_y)\in \partconv_2$ for the first and~$(t_x,s_y)\in\partconv_3$ for the second, and when~$t\in\partconv_4$ for the last one. The first one is rectangle-decomposable. For the last two, we have~$s\not\in S$ (since $s\in\partconv_0$) while the points~$(s_x,t_y)$, $(t_x,s_y)$ and~$t$ are in~$S$, hence~$\hook_2(\squarepos{s}{t}) \in \indecsupports_{|\squarepos{s}{t}}$. Since the modules are clearly interval-decomposable with interval summands being rectangles or top hooks, we do have that~$M_{|\squarepos{s}{t}}$ belongs to~$\decclass{\indecsupports_{|\squarepos{s}{t}}}$.
    \end{itemize}

    \medskip

    \paragraph{Case~$s\in\partconv_1$.}
    The restriction~$M_{|\squarepos{s}{t}}$ is then of one of the following forms:
    \begin{equation*}
        \begin{gathered}
            \begin{array}{ccc}
                \squarediag{\field}{\field^2}{\field}{\field^2}{\mymatrix{1\\1}}{\identity_\field}{\mymatrix{1\\1}}{\identity_{\field^2}}
                &\quad&
                \squarediag{\field}{\field}{\field}{\field}{\identity_\field}{\identity_\field}{\identity_\field}{\identity_\field}
            \end{array}
        \end{gathered}
    \end{equation*}
    which happen respectively when~$t\in\partconv_3$ for the first and~$t\in\partconv_4$ for the second. They are both clearly rectangle-decomposable.

    \medskip 

    \paragraph{Case~$s\in\partconv_2$.}
    \begin{itemize}
        \item If~$t\in\partconv_2$, then~$M_{|\squarepos{s}{t}}$ is of the form:
        \begin{equation*}
            \squarediag{\field}{\field}{\field}{\field}{\identity_\field}{\identity_\field}{\identity_\field}{\identity_\field},
        \end{equation*}
        which is clearly rectangle-decomposable.
        \item If~$t\in\partconv_3$, then~$M_{|\squarepos{s}{t}}$ is of one of the following forms:
        \medskip
        \begin{equation*}
            \begin{gathered}
                \begin{array}{c|c|c}
                    & (t_x,s_y)\in\partconv_2 & (t_x,s_y)\in\partconv_3 \\[2ex]
                    \hline &&\\
                    (s_x,t_y)\in\partconv_2 & \squarediag{\field}{\field}{\field}{\field^2}{\identity_\field}{\identity_\field}{\mymatrix{1\\0}}{\mymatrix{1\\0}} & \squarediag{\field}{\field^2}{\field}{\field^2}{\mymatrix{1\\0}}{\identity_\field}{\mymatrix{1\\0}}{\identity_{\field^2}}
                    \\[4.5em]
                    \hline
                    &&\\
                    (s_x,t_y)\in \partconv_3 & \squarediag{\field}{\field}{\field^2}{\field^2}{\identity_\field}{\mymatrix{1\\0}}{\identity_{\field^2}}{\mymatrix{1\\0}} & \squarediag{\field}{\field^2}{\field^2}{\field^2}{\mymatrix{1\\0}}{\mymatrix{1\\0}}{\identity_{\field^2}}{\identity_{\field^2}}\\
                \end{array}
            \end{gathered}
        \end{equation*}
        
        \medskip

        \noindent which are all rectangle-decomposable except when~$(s_x,t_y)$ and~$(t_x,s_y)$ are both in~$\partconv_3$ where a top hook summand appears. In that case, $s\not\in S$. In fact, $s\in\partconv_2$ with~$s_y \neq y_1$ since~$(t_x,s_y)\in\partconv_3$. Meanwhile, the points~$(s_x,t_y)$, $(t_x,s_y)$ and~$t$ are in~$S$. Hence, $\hook_2(\squarepos{s}{t}) \in \indecsupports_{|\squarepos{s}{t}}$ and we do have~$M_{\squarepos{s}{t}} \in \decclass{\indecsupports_{|\squarepos{s}{t}}}$.
        \item If~$t\in\partconv_4$, then~$M_{|\squarepos{s}{t}}$ is of one of the following forms:
        \begin{equation*}
            \begin{gathered}
                \begin{array}{ccc}
                    \squarediag{\field}{\field}{\field}{\field}{\identity_\field}{\identity_\field}{\identity_\field}{\identity_\field}
                    &\quad&
                    \squarediag{\field}{\field}{\field^2}{\field}{\identity_\field}{\mymatrix{1\\0}}{\mymatrix{1 & 0}}{\identity_\field}
                \end{array}
            \end{gathered}
        \end{equation*}
        which happen respectively when~$(s_x,t_y)\in \partconv_2$ and~$(s_x,t_y)\in \partconv_3$ and are both rectangle-decomposable.
    \end{itemize}

    \medskip

    \paragraph{Case~$s\in\partconv_3$.}
    Then~$M_{|\squarepos{s}{t}}$ is of one of the following forms:
    \begin{equation*}
        \begin{gathered}
            \begin{array}{ccc}
                \squarediag{\field^2}{\field^2}{\field^2}{\field^2}{\identity_{\field^2}}{\identity_{\field^2}}{\identity_{\field^2}}{\identity_{\field^2}}
                &\quad&
                \squarediag{\field^2}{\field}{\field^2}{\field}{\mymatrix{1 & 0}}{\identity_{\field^2}}{\mymatrix{1 & 0}}{\identity_\field}
            \end{array}
        \end{gathered}
    \end{equation*}
    which happen respectively when~$t\in\partconv_3$ and~$t\in\partconv_4$ and are both rectangle-decomposable.

    Thus, we have shown that~$M$ is indecomposable, while~$M_{|Q} \in\decclass{\indecsupports_{|Q}}$ for every square~$Q$ of~$\poset$. This concludes the proof.

\section{Homology pyramids and the strip}
\label{sec:example}
In this section we shall extend Theorem~\ref{thm:blc-decomposition} to persistence modules that are strongly exact on certain ``strip'' subsets of the plane. As we shall see, such persistence modules arise naturally in the context of TDA. 

First we recall the algebraic formulation of strong exactness. 

\begin{defi}[Strong exactness]\label{def:strong-exactness}
  A persistence module $\basemod$ over a subset $Q\subseteq \R^2$ is \emph{strongly exact} if for any $s\leq t$ in $Q$, such that $(s_x,t_y)$ and $(t_x,s_y)$ are also in $Q$, the following sequence is exact:
\begin{equation*}
    M_s \xrightarrow{\mor_s^{(t_x,s_y)}\oplus \mor_s^{(s_x,t_y)}} M_{(s_x,t_y)}\oplus M_{(t_x,s_y)} \xrightarrow{\mor_{(s_x,t_y)}^t - \mor_{(t_x,s_y)}^t} M_t.
\end{equation*}
\end{defi}
Define the \emph{strip} to be the subposet $\mathsf{St}\subset \R^2$ given by
\begin{equation*}\mathsf{St} = \{(x,y) \mid y\leq x+1 \text{ and } y \geq x-1\},\end{equation*}
or equivalently, all points situated on and between the two lines $y = x+1$ and $y=x-1$. Of importance is the \emph{pyramid} subposet $\pyramid$ defined by
 \begin{equation}
    \pyramid = \left\{ (x,y)\in(-1,1)^2,\ |x|+|y| \leq 1 \right\} \setminus \left\{(x,y) \in (-1,1)^2,\ x + y = -1\right\}. 
\end{equation}
See Figure \ref{fig:pyramid} for an illustration of $\pyramid$. 
Concatenating translations of pyramids we obtain our main object of study. Specifically, for a non-negative integer $k$, let $\pyramid(k) = \{ p\in \R^2 \mid p+(k,k) \in \pyramid\}$, and for $0\leq m \leq \infty$, 
\[
\mathsf{St}_m = \begin{cases} \bigcup_{0\leq k\leq m} \pyramid(k) & \text{if } m < \infty, \\
\bigcup_{0\leq k< \infty} \pyramid(k) & \text{if } m = \infty. \end{cases}
\]
Figure \ref{fig:mv-strip} illustrates the process of constructing $\mathsf{St}_\infty$. 
%

%
In Section~\ref{sec:proof-strip} we prove the following. 
\begin{thm}
\label{thm:strip}
Let $M\in\cat{Per}(\mathsf{St}_m)$ be pfd, strongly exact and trivial when restricted to indices on the boundary components $y=x+1$ and $y=x-1$. Then $M$ is interval-decomposable where each interval $I$ is of the form $I=R\cap \mathsf{St}_m$, where $R$ is a maximal rectangle supported on the interior of $\mathsf{St}$. 
\end{thm}
Here maximality is meant in the following sense: if $R\subseteq S\subset \R^2$ where $S$ is a rectangle supported in the interior of $\mathsf{St}$, then $R=S$. 


\begin{rk}
A proof of Theorem \ref{thm:strip} for $m=0$, under the assumption that the persistence modules are determined by a finite subset of $\pyramid$, first appeared in work by Bendich et al.~\cite{bendich2013homology}, following ideas from Carlsson et al.~\cite{carlsson2009zigzag}. An alternative proof in that setting was given by the authors of this paper in \cite{botnan2020rectangledecomposable} using Theorem~\ref{thm:finite-case}. 

A proof of Theorem \ref{thm:strip} for $m=\infty$, under the assumption that  $M$ is \emph{sequentially continuous}, can be found in recent work by Bauer et al.~\cite{bauer2021structure}. 
\end{rk}

\subsection{Motivation from TDA}
\label{sec:motivation-TDA}
The interest in block-decomposable modules originated in the study of levelset persistent homology. Specifically, for a continuous function $f\colon X\to \R$ one constructs the persistence module $M(f)\colon \{(x,y)\in\R^{\text{op}}\times \R, x<y\}\to \cat{Vec}$,
\begin{equation*}M(f)_{(x,y)} = H_i\left(\{ p \in X,  x < f(p) < y\}\right).\end{equation*}
It follows immediately from a simple Mayer--Vietoris calculation that $M(f)$ is strongly exact; see e.g., \cite{Cochoy2016}. Furthermore, assuming that $M(f)$ is pfd, it is interval-decomposable where each interval $I$ is of the form $I=B\cap \{(x,y)\in\R^{\text{op}}\times \R, x<y\}$, and where $B$ is a block in $\R^{\text{op}}\times \R$ \cite{botnan2018decomposition,Cochoy2016}; see Figure~\ref{fig:lzz-barcode} for an example of a real-valued function and the associated intervals. 

The domain of $M(f)$ can be extended by considering relative homology. To see this, consider the poset consisting of all pairs of open sets $(u,v)$ of the form,

\begin{align}
\label{eq:pairs-of-intervals}
 ((x,y), \emptyset) \qquad ((x, \infty), (y, \infty)) \qquad ((-\infty, y), (-\infty, x)) \qquad (\R, (-\infty, x)\cup (y, \infty)),
\end{align}
where $-\infty \leq x \leq  y \leq \infty$ are chosen such that $u\neq \emptyset$. The persistence module $M(f)$ is extended by defining $M(f)_{(u,v)} = H_i(f^{-1}(u), f^{-1}(v))$. Moreover, and as suggested in Figure~\ref{fig:pyramid}, $M(f)$ can be seen as a persistence module over $\pyramid$ after re-scaling, and reversing the horizontal arrow (see also Remark~\ref{rk:Kan-extensions}). We shall refer to the resulting persistence module $M(f)\in \catper{\pyramid}$  as the \emph{(continuous) homology pyramid (in dimension $i$)}. Note that the homology pyramid is trivial on the boundaries $y=x+1$ and $y=x-1$, as the corresponding vector spaces are given by the relative homology of preimages of pairs of the form $((x, \infty), (x, \infty))$ and $((-\infty, x), (-\infty, x))$, respectively. In particular, Theorem~\ref{thm:strip} for $m=0$ gives a structure theorem for pfd homology pyramids. 
\begin{figure}[h]
    \centering
    \includegraphics[scale=0.5]{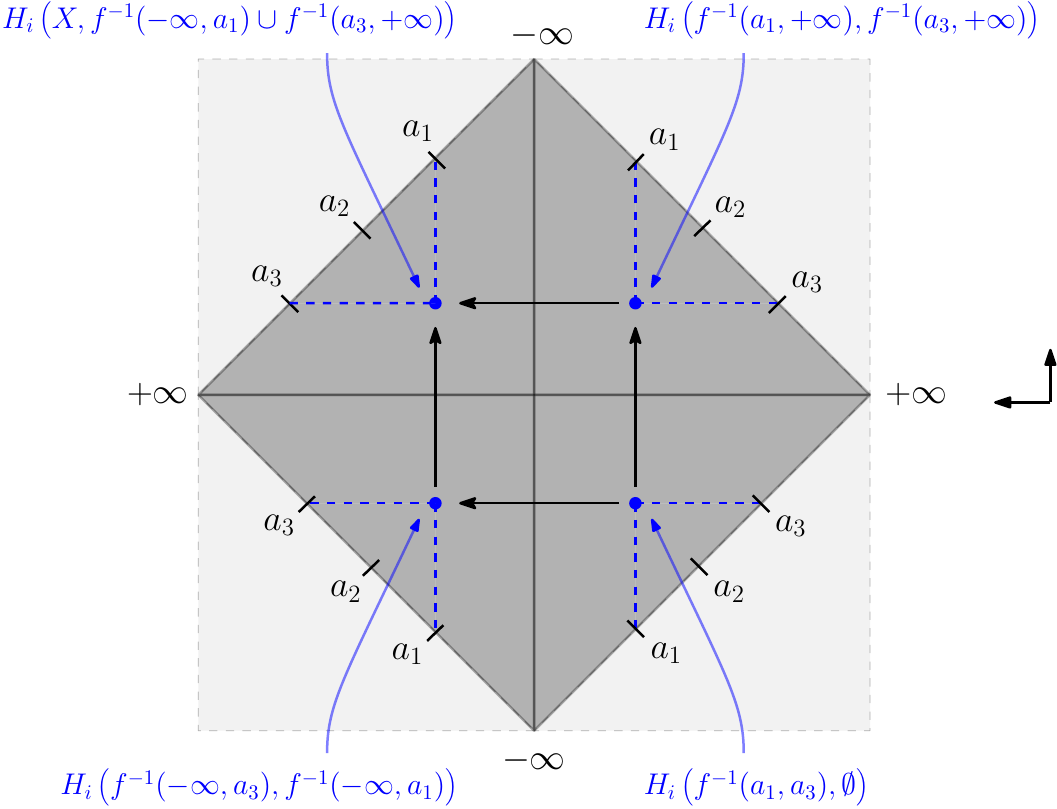}
    \caption{In dark grey, the homology pyramid of \cite[p.~6,7]{bendich2013homology} (up to rotation). Up to rescaling and reversing the horizontal axis, the underlying poset corresponds to the pyramid poset $\pyramid$. }
    \label{fig:pyramid}
\end{figure}

As was first noted by Carlsson et al. \cite{carlsson2009zigzag},  the boundary maps in the relative Mayer--Vietoris sequence can be used to connect homology pyramids of consecutive dimensions if one flips every other pyramid. In particular, $m+1$ consecutive homology pyramids assemble into a persistence module over $\mathsf{St}_m$. Figure~\ref{fig:mv-strip} illustrates this procedure and shows the interval-decomposition of $M(f)\in\catper{\mathsf{St}_\infty}$ associated to the function $f$ from Figure~\ref{fig:lzz-barcode}. 
\begin{remark}
We remark that care must be taken in verifying that the resulting persistence module $M(f)$ over $\mathsf{St}_\infty$ is well-defined. The reader should consult \cite{bauer2021structure} for a careful construction of $M(f)$, and a direct proof of an associated structure theorem. 
\end{remark}

\begin{figure}
\center
\centering
 \scalebox{.8}{
\begin{tikzpicture}[scale=.3]
\begin{scope}

\begin{scope}[rotate=90, xshift=-7cm, scale=1.4]
\fill[red!60,draw=black,even odd rule]
(0,0) to [out=90,in=180] (3,1.5) to [out=0,in=90] (9,-.5) to [out=270,in=270] (1,-1) to [out=90,in=270] (3,0) to [out=90,in=270] (0,0)
(4,-.2) to [out=90, in=90] (7,-.2) to [out=270, in=270] (4,-.2);
\draw[very thick, ->] (-1,4) -- (10,4);
\def\x{-5}
\def\y{5}
\draw[dotted] (0,\y) -- (0,\x);
\draw[dotted] (1,\y) -- (1,\x);
\draw[dotted] (3,\y) -- (3,\x);
\draw[dotted] (4,\y) -- (4,\x);

\draw[dotted] (7,\y) -- (7,\x);
\draw[dotted] (9,\y) -- (9,\x);

\node[left] at (0,\y) {$a_1$};
\node[left] at (1,\y) {$a_2$};
\node[left] at (3,\y) {$a_3$};
\node[left] at (4,\y) {$a_4$};
\node[left] at (7,\y) {$a_5$};
\node[left] at (9,\y) {$a_6$};

\end{scope}

\begin{scope}[xshift=20cm, scale=0.5, yshift=3cm]
\node at (10,5) {$H_0$};

\draw[dashed] (-3,-3) -- (13,13); 

\node[right=3pt] (x1) at (0,0) {\small $a_1$};
\node[right=3pt] (x2) at (1,1) {\small $a_2$};
\node[right=3pt] (x3) at (3,3) {\small $a_3$};
\node[right=3pt] (x4) at (4,4) {\small $a_4$};
\node[right=3pt] (x5) at (7,7) {\small $a_5$};
\node[right=3pt] (x6) at (9,9) {\small $a_6$};

\draw[draw=none, fill=black, fill opacity=0.15] (-5,0) -- (0,0) -- (9,9) -- (9,14) -- (-5,14) -- cycle;
\draw[dashed] (-5,0) -- (0,0);
\draw[dashed] (9,9) -- (9,14);

\draw[draw=none, fill=black, fill opacity=0.15]  (-5,1) -- (1,1) -- (3,3) -- (-5,3) -- cycle;
\draw[dashed] (-5,1) -- (1,1);
\draw (3,3) -- (-5,3);
\draw[->] (-6,8) -- (-6,10);
\draw[->] (-6,8) -- (-8,8);
\draw[dashed, fill=black, fill opacity=0.15]  (4,4) -- (7,7) -- (4,7) -- cycle;
\draw (4,4) -- (4,7) -- (7,7);

\coordinate (y1) at (0,0);
\coordinate (y2) at (1,1);
\coordinate (y3) at (3,3);
\coordinate (y4) at (4,4);
\coordinate (y5) at (7,7);
\coordinate (y6) at (9,9);

\foreach \a in {y1, y2, y3, y4, y5, y6}
	\draw[fill=black] (\a) circle (1ex); 
\end{scope}

\begin{scope}[xshift=20cm, scale=0.55, yshift=-17cm]

\draw[dashed] (-3,-3) -- (13,13); 

\node[right=3pt] (x1) at (0,0) {\small $a_1$};
\node[right=3pt] (x2) at (1,1) {\small $a_2$};
\node[right=3pt] (x3) at (3,3) {\small $a_3$};
\node[right=3pt] (x4) at (4,4) {\small $a_4$};
\node[right=3pt] (x5) at (7,7) {\small $a_5$};
\node[right=3pt] (x6) at (9,9) {\small $a_6$};
\draw[->] (-6,8) -- (-6,10);
\draw[->] (-6,8) -- (-8,8);

\draw[draw=none, fill=black, fill opacity=0.15] (4,7) -- (4,14) -- (-5,14) -- (-5, 7) -- cycle;
\draw[dashed] (-5,7) -- (4,7) -- (4,14);
\node at (10,5) {$H_1$};
\coordinate (y1) at (0,0);
\coordinate (y2) at (1,1);
\coordinate (y3) at (3,3);
\coordinate (y4) at (4,4);
\coordinate (y5) at (7,7);
\coordinate (y6) at (9,9);

\foreach \a in {y1, y2, y3, y4, y5, y6}
	\draw[fill=black] (\a) circle (1ex); 
\end{scope}

\end{scope}
\end{tikzpicture}}
\caption{A function $f$ on a topological space (left) and the associated interval-decompositions in levelset persistent homology in dimensions 0 and 1 (right).  }
\label{fig:lzz-barcode}
\end{figure}

\begin{figure}
\centering\centering
 \scalebox{.8}{
\begin{tikzpicture}[scale=0.2, rotate=0, yscale=1, xscale=-1]
\begin{scope}

\begin{scope}[scale=0.8]
\def\x{25}
\def\y{5}
%

\coordinate (y1) at (0,0);
\coordinate (y2) at (1,1);
\coordinate (y3) at (3,3);
\coordinate (y4) at (4,4);
\coordinate (y5) at (7,7);
\coordinate (y6) at (9,9);

\coordinate (w1) at ($ (y1) + (\x,-\x)$);
\coordinate (w2) at ($ (y2) + (\x,-\x)$);
\coordinate (w3) at ($ (y3) + (\x,-\x)$);
\coordinate (w4) at ($ (y4) + (\x,-\x)$);
\coordinate (w5) at ($ (y5) + (\x,-\x)$);
\coordinate (w6) at ($ (y6) + (\x,-\x)$);

\coordinate (x1) at (-15+\x,17-\x);
\coordinate (x2) at (-13+\x,19-\x);
\coordinate (x3) at (-10+\x,22-\x);
\coordinate (x4) at (-9+\x,23-\x);
\coordinate (x5) at (-7+\x,25-\x);
\coordinate (x6) at (-6+\x,26-\x);

\coordinate (ww1) at ($ (x1) + (\x,-\x)$);
\coordinate (ww6) at ($ (x6) + (\x,-\x)$);

\coordinate (z1) at (-15,17);
\coordinate (z2) at (-13,19);
\coordinate (z3) at (-10,22);
\coordinate (z4) at (-9,23);
\coordinate (z5) at (-7,25);
\coordinate (z6) at (-6,26);

\foreach \a in {z1, z2,z3,z4,z5,z6, y1,y2,y3,y4,y5,y6, w1,w2,w3,w4,w5,w6,x1,x2,x3,x4,x5,x6}
	\draw[fill=black] (\a) circle (1ex);

\node[right=1pt] at (x1) {\tiny $a_6$};
\node[right=1pt] at (x2) {\tiny $a_5$};
\node[right=1pt] at (x3)  {\tiny $a_4$};
\node[right=1pt] at (x4) {\tiny $a_3$};
\node[right=1pt] at (x5) {\tiny $a_2$};
\node[right=1pt] at (x6) {\tiny $a_1$};

\node[left=2pt] at (z1) {\tiny $a_6$};
\node[left=2pt] at (z2) {\tiny $a_5$};
\node[left=2pt] at (z3)  {\tiny $a_4$};
\node[left=2pt] at (z4) {\tiny $a_3$};
\node[left=2pt] at (z5) {\tiny $a_2$};
\node[left=2pt] at (z6) {\tiny $a_1$};

\node[left=2pt] at (y1) {\tiny $a_1$};
\node[left=2pt] at (y2) {\tiny $a_2$};
\node[left=2pt] at (y3)  {\tiny $a_3$};
\node[left=2pt] at (y4) {\tiny $a_4$};
\node[left=2pt] at (y5) {\tiny $a_5$};
\node[left=2pt] at (y6) {\tiny $a_6$};

\node[right=1pt] at (w1) {\tiny $a_1$};
\node[right=1pt] at (w2) {\tiny $a_2$};
\node[right=1pt] at (w3)  {\tiny $a_3$};
\node[right=1pt] at (w4) {\tiny $a_4$};
\node[right=1pt] at (w5) {\tiny $a_5$};
\node[right=1pt] at (w6) {\tiny $a_6$};

\draw (13,13) -- (-3,29) -- (-19,13) -- (-3,-3);
\draw (13+\x,13-\x) -- (-3+\x,29-\x) -- (-19+\x,13-\x) -- (-3+\x,-3-\x);
\draw[dashed] (-3,-3) -- (13,13);
\draw[dashed] (-3+\x,-3-\x) -- (13+\x,13-\x);

\draw[dashed, draw=none, fill=black, fill opacity=0.15] (-6,0) -- (0,0) -- (9,9) -- (9,17) -- (-6,17) -- cycle;
\draw[dashed] (-6,0) -- (0,0) -- (9,9) -- (9,17);
\draw (9,17) -- (-6,17) -- (-6,0);

\draw[draw=none, fill=black, fill opacity=0.15]  (-7,1) -- (1,1) -- (3,3) -- (-7,3) -- cycle;
\draw[dashed] (-7,1) -- (1,1);
\draw (3,3) -- (-7,3) -- (-7,1);

\draw[fill=black, fill opacity=0.15]  (4,4) -- (4,7) -- (7,7);

\draw[draw=none, fill=black, fill opacity=0.2] (4+\x,7-\x) -- (4+\x,22-\x) -- (-10+\x,22-\x) -- (-13+\x,19-\x) -- (-13+\x,7-\x)--  cycle;
\draw[dashed] (-13+\x,7-\x) -- (4+\x,7-\x) -- (4+\x,22-\x);
\draw (4+\x,22-\x) -- (-10+\x,22-\x);
\draw (-13+\x,7-\x) -- (-13+\x,19-\x);

\draw[dashed] (-7+\x,25-\x) -- (-7+\x,3-\x)--(-9+\x,3-\x);
\draw (-9+\x,23-\x) -- (-9+\x,3-\x);
\draw[draw=none, fill=black, fill opacity=0.2] (-7+\x,25-\x) -- (-7+\x,3-\x) -- (-9+\x,3-\x) -- (-9+\x,23-\x) -- cycle;

\draw[dashed] (-6+\x,26-\x) -- (-6+\x,17-\x) -- (-15+\x, 17-\x);
\draw[dashed, draw=none, fill=black, fill opacity=0.2] (-6+\x,26-\x) -- (-6+\x,17-\x) -- (-15+\x, 17-\x); -- cycle;

\draw (-3,-3) -- (-3,29);
\draw (-19,13) -- (13,13);

\draw (-3+\x,-3-\x) -- (-3+\x,29-\x);
\draw (-19+\x,13-\x) -- (13+\x,13-\x);

\node at (-10,0) {$H_0$};
\node at (-10+\x,-\x) {$H_1$};

\draw [thick,->] (x6) to [out=150,in=-120] (y1);
\draw [thick, ->] (x1) to [out=150,in=30] (y6);

\draw [dashed,thick,->] (ww6) to [out=150,in=-120] (w1);
\draw [dashed, thick, ->] (ww1) to [out=150,in=30] (w6);

\end{scope}

\begin{scope}[xshift=25cm,yshift=15cm, scale=0.8]

\node[rotate=-0] at (12,18) {$M(f)$};

\def\x{16}
\def\y{5}
\draw (-3,-3) -- (13,13) -- (-3,29) -- (-19,13) -- cycle;
\draw (-3+\x,-3-\x) -- (13+\x,13-\x) -- (-3+\x,29-\x) -- (-19+\x,13-\x) -- cycle;
\draw (-3+\x, -3-\x) -- (-3+1.5*\x, -3-1.5*\x);
\draw (13+\x, 13-\x) -- (13+1.5*\x, 13-1.5*\x);

\coordinate (y1) at (0,0);
\coordinate (y2) at (1,1);
\coordinate (y3) at (3,3);
\coordinate (y4) at (4,4);
\coordinate (y5) at (7,7);
\coordinate (y6) at (9,9);

\coordinate (x1) at (-15+2*\x,17-2*\x);
\coordinate (x2) at (-13+2*\x,19-2*\x);
\coordinate (x3) at (-10+2*\x,22-2*\x);
\coordinate (x4) at (-9+2*\x,23-2*\x);
\coordinate (x5) at (-7+2*\x,25-2*\x);
\coordinate (x6) at (-6+2*\x,26-2*\x);

\coordinate (z1) at (-15,17);
\coordinate (z2) at (-13,19);
\coordinate (z3) at (-10,22);
\coordinate (z4) at (-9,23);
\coordinate (z5) at (-7,25);
\coordinate (z6) at (-6,26);

\foreach \a in {z1, z2,z3,z4,z5,z6, y1,y2,y3,y4,y5,y6, x1,x2,x3,x4,x5,x6}
	\draw[fill=black] (\a) circle (1ex);

\node[left=1pt] at (x1) {\tiny $a_6$};
\node[left=1pt] at (x2) {\tiny  $a_5$};
\node[left=1pt] at (x3)  {\tiny $a_4$};
\node[left=1pt] at (x4) {\tiny $a_3$};
\node[left=1pt] at (x5) {\tiny $a_2$};
\node[left=1pt] at (x6) {\tiny $a_1$};

\node[left=1pt] at (z1) {\tiny $a_6$};
\node[left=1pt] at (z2) {\tiny $a_5$};
\node[left=1pt] at (z3)  {\tiny $a_4$};
\node[left=1pt] at (z4) {\tiny $a_3$};
\node[left=1pt] at (z5) {\tiny $a_2$};
\node[left= 1pt] at (z6) {\tiny $a_1$};

\draw[dashed, draw=none, fill=black, fill opacity=0.15] (-6,0) -- (9,0) -- (9,17) -- (-6,17) -- cycle;
\draw[dashed] (-6,0) -- (9,0) -- (9,17);
\draw (9,17) -- (-6,17) -- (-6,0);

\draw[dashed, draw=none, fill=black, fill opacity=0.15]  (-7,1) -- (23,1) -- (23,3) -- (-7,3) -- cycle;
\draw[dashed] (-7,1) -- (23,1) -- (23,3);
\draw (-7,1) -- (-7,3) -- (23,3);

\draw[draw=none, fill=black, fill opacity=0.15]  (19,7) -- (4,7) -- (4,-10) -- (19,-10);
\draw[dashed] (4,-10) -- (19,-10) -- (19,7);
\draw (19,7) -- (4,7) -- (4,-10);

\end{scope}

\end{scope}
\end{tikzpicture}}
\caption{The homology pyramids and the corresponding $M(f)\in\catper{\mathsf{St}_\infty}$ of the function in Figure~\ref{fig:lzz-barcode}. }
\label{fig:mv-strip}

\end{figure}

\subsection{Proof of Theorem~\ref{thm:strip}}
\label{sec:proof-strip}
We now return to the proof of Theorem~\ref{thm:strip}. This lemma is fundamental. 
\begin{lem}
\label{lem:strip}
Let $\basemod$ be a strongly exact and pfd persistence module over $\mathsf{St}$, such that $\basemod$ is trivial when restricted to the boundary components $y=x+1$ and $y=x-1$. Then $\basemod$ is interval-decomposable and each interval is a maximal rectangle on the interior of $\mathsf{St}$.
\end{lem}
\begin{proof}
Since $\basemod$ is trivial on the boundary components, we may extend $\basemod$ to a persistence module over $\R^2$ by defining $\basemod_p = 0$ for all $p$ not contained in $\mathsf{St}$. By virtue of Theorem~\ref{thm:positive-rec-squares}, it suffices to show that the extension to $\R^2$ is weakly exact. This is not hard to see:
consider $s$ and $t$ as in Definition~\ref{def:weak-exactness}. Then, by assumption, $\basemod$ can only fail to be weakly exact if $(s_x, t_y)$ or $(t_x,s_y)$ is not contained in $\mathsf{St}$. However, in that case,
  \begin{equation*}
    \begin{split}
        0 = \Ima\mor_s^t &= \Ima\mor_{(t_x,s_y)}^t \cap \Ima\mor_{(s_x,t_y)}^t, \\
        \basemod_s = \Ker\mor_s^t &= \Ker\mor_s^{(t_x,s_y)} + \Ker\mor_s^{(s_x,t_y)}.
    \end{split}
    \end{equation*}
Hence, $\basemod$ is weakly exact. Furthermore, since the extension of $M$ is trivial outside of $\mathsf{St}$, any rectangle $R$ in the decomposition of $\basemod$ must be necessarily be supported on a subset of $\mathsf{St}$. 

We now show that $R$ is maximal. Write $R=\langle a,b\rangle\times \langle c,d\rangle$ where $\langle a,b\rangle$ denotes an interval in $\R$ with left and right endpoints given by $a$ and $b$, respectively, e.g., $[a,b)$ or $(a,b)$.  Assume that the point $(a,c)\in \R^2$ is not on the boundary of $\mathsf{St}$. Then, we can choose $s$ and $t$ such that the square $A=\{s_x, t_x\}\times \{s_y, t_y\}$ is contained in the interior of $\mathsf{St}$ and $R\cap A=\{(s_x, t_y)\}$. In particular, the sequence in Definition~\ref{def:strong-exactness} associated to $A$ has a summand of the form $0\to \field\to 0$, contradicting that the sequence is exact. Symmetrically, the point $(b,d)\in \R^2$ must be contained in the other boundary component. 

\end{proof}

We now describe a procedure to extend $M\in \catper{\mathsf{St}_m}$ to a persistence module over $\mathsf{St}$ by means of left and right Kan extensions. In order to ensure that the extension is trivial on the boundary components we shall consider the poset $\mathsf{St}_m^b$ given by the union of $\mathsf{St}_m$ with the two boundary components of $\mathsf{St}$. The extension works as follows:
\begin{enumerate}
	\item Extend $\basemod\in \catper{\mathsf{St}_m}$ to $ \basemod\in \catper{\mathsf{St}^b_m}$ by defining the module to be trivial on the boundary components.
	\item Extend to $\Ran_\phi(\basemod)\in \catper{\mathsf{St}^b_\infty}$ by a right Kan Extension along the inclusion ~$\phi:\mathsf{St}^b_m \hookrightarrow \mathsf{St}^b_\infty$.
    \item Extend to $\boxper := \Lan_\psi(\Ran_\phi(\basemod))\in\catper{\mathsf{St}}$ by means of a left Kan extension along~$\psi:\mathsf{St}^b_\infty \hookrightarrow \mathsf{St}$.
\end{enumerate}
The fact that $\boxper|_{\mathsf{St}_m} \cong M$ follows from \cite[Corollary 3.6.9]{riehl2017category} as both of the Kan extensions are computed pointwise along a full functor. 

The outline for proving Theorem \ref{thm:strip} is as follows. First, we show in Lemmas \ref{lem:strip-pfd} and \ref{lem:strip-split} that $M$ decomposes as $M\cong N\oplus N_1\oplus N_2$, where $\widetilde{N}$ is pfd and strongly-exact, and $N_1$ and $N_2$ decompose as stated in Theorem \ref{thm:strip}. Then, we apply Lemma~\ref{lem:strip} to $\widetilde{N}$. The result then follows from restricting to $\mathsf{St}_m$ and using the observation that $N\cong \widetilde{N}|_{\mathsf{St}_m}$.

In the proofs, we shall make use of the following notation, 
 \begin{equation*}
    \begin{split}
        \pyramidhigh &:= \left\{ (x,y)\in(0,1)^2,\ x + y \leq  1 \right\}, \\ 
        \pyramidlow &:= \left\{(x,y)\in (-m-1,-m)^2, x+y> -1-2m\right\},
    \end{split}
 \end{equation*}

and we shall denote the union of each of these subsets with the boundary components of $\mathsf{St}$ by $\pyramidhigh^b$ and $\pyramidlow^b$, respectively.

\begin{rk}
    \label{rk:duality}
    Dualizing each internal space and each internal morphism of a persistence module $M$ over $\pyramid$ yields a persistence module $\dual M$ over $\pyramid^\text{op}$, the opposite category of $\pyramid$. Dualization defines a contravariant functor $\dual : \cat{Per}(\pyramid) \to \cat{Per}(\pyramid^\text{op})$ which sends strongly exact modules to strongly exact modules, and which satisfies $\dual \dual M \simeq M$ for any pfd persistence module $M$.
\end{rk}

\begin{rk}
    \label{rk:Kan-extensions}
    In the following we shall make use of results from \cite{botnan2018decomposition} and \cite{Cochoy2016}. The results were originally formulated for persistence modules over~$\R^2$, over~$T := \{(x,y)\in\R^2,\, x+y>0\}$ and over $\overline{T} := \{(x,y)\in\R^2,\, x+y\geq 0\}$. These results do however apply verbatim in the settings of $\pyramidhigh$ and $\pyramidlow$, as can be seen from the following two poset isomorphisms and Remark~\ref{rk:duality},
        \begin{equation*}
            \begin{split}
                (x,y)\in\pyramidlow \mapsto &\left(\tan\left(\frac{\pi}{2}(x+1+m)\right),\tan\left(\frac{\pi}{2}(y+1+m)\right)\right)\in T\\
                (x,y)\in\pyramidhigh^\text{op} \mapsto &\left(\tan\left(\frac{\pi}{2}(-x+1)\right),\tan\left(\frac{\pi}{2}(-y+1)\right)\right)\in\overline{T}
            \end{split}
    \end{equation*}
\end{rk}

The following is a slight reformulation of two results from \cite[Sec.~5.2]{botnan2018decomposition}.
\begin{theorem}
\label{thm:cite-blocks}
Let $M$ be a strongly exact and pfd persistence module over $\pyramidhigh$ or $\pyramidlow$. Then $M$ is interval-decomposable, and each interval $B$ is of the form $B=R\cap \pyramidhigh$ or $I=R\cap \pyramidlow$, respectively, where $R$ is a maximal rectangle in the interior of $\mathsf{St}$.
\end{theorem}
We shall refer to such intervals as \emph{blocks}. If $B=R\cap \pyramidhigh$, and there exists a $p\in \pyramidhigh$ such that $p<q$ for all $q\in B$, then $B$ is a \emph{birth quadrant}. Dually, if $B=R\cap \pyramidlow$ and there exists $q\in B$ such that $p<q$ for all $q\in B$, then $B$ is a \emph{death quadrant}. For instance, the triangular shaped blocks in the homology pyramids in dimension 0 and 1 shown in Figure~\ref{fig:mv-strip} are death and birth quadrants, respectively. The following is an immediate corollary of Theorem~\ref{thm:cite-blocks}.

\begin{cor}
Let $\basemod\in\catper{\mathsf{St}_m}$ be as in the statement of Theorem~\ref{thm:strip}. Then the restriction of $M$ to $\pyramidlow$, denoted by $M|_{\pyramidlow}$, decomposes as 
    \begin{equation}
        \label{eq:blc-dec-M-general}
        \basemod_{|\pyramidlow} \simeq \bigoplus_{B\in\blockcode_1}\field_B,
    \end{equation}
where $\blockcode_1$ is a multiset of blocks in $\pyramidlow$. Dually, the restriction of $M$ to $ \pyramidhigh$ decomposes as 
    \begin{equation}
        \label{eq:blc-dec-M-general2}
        \basemod_{|\pyramidhigh} \simeq \bigoplus_{B\in\blockcode_2}\field_B,
    \end{equation}
where $\blockcode_2$ is a multiset of blocks in $\pyramidhigh$.
\label{cor:decomp}
\end{cor}

\begin{rk}
    \label{rk:description-Kan-extensions}
    Let $\phi:\mathsf{St}^b_m \hookrightarrow \mathsf{St}^b_\infty$, and $\psi:\mathsf{St}^b_\infty \hookrightarrow \mathsf{St}$ denote the inclusions introduced above. The inclusion of posets $\pyramidlow^b \hookrightarrow \mathsf{St}^b_m$ is initial in the sense of \cite[Sec.~IX.3]{MacLane1998}, and therefore \cite[Sec.~IX.3, Thm.~1]{MacLane1998}, we have the following natural isomorphism for all $p\in \mathsf{St}^b_\infty - (\mathsf{St}^b_m - \pyramidlow^b)$,
    \begin{equation*}
        \left(\Ran_{\phi}\basemod\right)_p \simeq \left(\Ran_{\phi|_{\pyramidlow^b}}\basemod|_{\pyramidlow^b}\right)_p
    \end{equation*}
    Similarly, one has:
    \begin{equation*}
        \boxper_p\simeq \left(\Lan_{\psi}(\Ran_{\phi}M)\right)_p \simeq \left(\Lan_{\psi|_{\pyramidhigh^b}}(\Ran_{\phi}M)|_{\pyramidhigh^b}\right)_p,
    \end{equation*}
    for all $p\in \mathsf{St}-(\mathsf{St}^b_\infty-\pyramidhigh^b)$. %
\end{rk}

\begin{lem}
\label{lem:strip-pfd}
Let $\basemod\in\catper{\mathsf{St}_m}$ be as in the statement of Theorem ~\ref{thm:strip}, and let $\blockcode_1$ and $\blockcode_2$ be as in Corollary~\ref{cor:decomp}. 
\begin{enumerate}
\item $\boxper$ is strongly exact.
\item If $\blockcode_1$ contains no death quadrant, and $\blockcode_2$ contains no birth quadrant, then $\boxper$ is pfd. 
\end{enumerate}
\end{lem}

\begin{proof}
First we prove (1). Following the first step in the extension procedure outlined above, we obtain $M\in\catper{\mathsf{St}_m^b}$ by adding 0 vector spaces. To simplify notation, we shall let $D$ and $U$ denote the sets $D=\mathsf{St}_\infty - (\mathsf{St}_m - \pyramidlow)$, $U=(\mathsf{St}-(\mathsf{St}_\infty-\pyramidhigh))$ and $N=\basemod|_{\pyramidlow^b}$. Then 
    \begin{equation*}
        N \simeq \bigoplus_{B\in\blockcode_1}\field_{B}.
    \end{equation*}
    where each block $B\in \blockcode_1$ is of the form $B=R\cap\pyramidlow$ where $R$ is a maximal rectangle in the interior of $\mathsf{St}$. It is not hard to see that extending $\field_{B}$ ``to the left'' using a right Kan extension will recover $\field_{R}$ on $\mathsf{St}_\infty - \mathsf{St}_m$. That is, we have the following natural isomorphism 
   \begin{equation*}\Ran_{\phi|_{\pyramidlow^b}} (\field_B)_p \cong (\field_R)_p,\end{equation*} 
   for all $p\in D$. In particular, since $\field_R$ is strongly exact on $\mathsf{St}$ (and therefore on $D$), it follows that $\Ran_{\phi|_{\pyramidlow^b}} (\field_B)$ is strongly exact on $D$. Hence, 
   \begin{equation*} (\Ran_\phi M)|_D \simeq \left(\Ran_{\phi|_{\pyramidlow^b}} N\right)|_D\simeq \left(\bigoplus_{B\in\blockcode_1}\Ran_{\phi|_{\pyramidlow^b}}\field_B\right)|_D \end{equation*}   
    is strongly exact on $D$. Here the first isomorphism follows from   Remark~\ref{rk:description-Kan-extensions}, and the second isomorphism follows from the fact that right Kan extensions commute with direct products, and that direct products and direct sums coincide when working in the pfd setting; see e.g. \cite[Rk.~2.16]{Botnan2016} for more details. 
    
Let $N'$ denote the restriction of $\Ran_\phi M$ to $\pyramidhigh^b$. From Corollary~\ref{cor:decomp} we now have that
    \begin{equation*}
        N' \simeq \bigoplus_{B'\in\blockcode_2}\field_{B'}.
    \end{equation*}
    where $\blockcode_2$ is a multiset of blocks of $\pyramidhigh$. It follows from the second part of Remark~\ref{rk:description-Kan-extensions}, that
    $\boxper|_U \cong (\Lan_{\psi|_{\pyramidhigh^b}} N')|_U$. Showing that the latter is strongly exact is dual to the first part of this proof. In conclusion, we have that $\boxper$ is strongly exact when restricted to $D$, $\mathsf{St}_m$ and $U$. To see that this implies strong exactness on the whole of $\mathsf{St}$, let $s$ and $t$ be as in Definition~\ref{def:strong-exactness}, and assume that $s\in D-D\cap\mathsf{St}_m$ and $t\in \mathsf{St}_m - D\cap\mathsf{St}_m$. Then we can find $p\in D\cap\mathsf{St}_m=\pyramidlow$, such that $s\leq p\leq t$, and such that each of the smaller squares in the following diagram is strongly exact,
        \begin{equation}
        \label{eq:diag-elem-strongly-exact}
        \begin{tikzcd}
        {\boxper_{(s_x,t_y)}} \arrow[r] & {\boxper_{(p_x,t_y)}} \arrow[r]           & \boxper_{t}\\
        {\boxper_{(s_x,p_y)}} \arrow[r] \arrow[u]  & {\boxper_{p}} \arrow[r]   \arrow[u]          & \boxper_{(t_x, p_y)} \arrow[u] \\
        \boxper_{s} \arrow[r] \arrow[u] & {\boxper_{(p_x,s_y)}} \arrow[u] \arrow[r] & {\boxper_{(t_x,s_y)}} \arrow[u]
        \end{tikzcd}.
    \end{equation}
A simple diagram chase shows that the outer square is strongly exact. Hence, $\boxper$ is strongly exact on $\mathsf{St}_\infty = D\cup \mathsf{St}_m$. A similar argument shows that $\boxper$ is strongly exact on $\mathsf{St}_\infty\cup U = \mathsf{St}$.

Now we prove (2). Since $\boxper|_{\mathsf{St}_m} \cong \basemod|_{\mathsf{St}_m}$, it suffices to prove that $\boxper$ is pfd over $D-D\cap \mathsf{St}_m$ and $U-U\cap \mathsf{St}_m$. We prove the former, the second is dual. Assume that $\Ran_{\phi_{\pyramidlow^b}}N$ is not pfd at $p\in D-D\cap \mathsf{St}_m$. Then, there must exist an infinite family of blocks $\{B_\lambda = R_\lambda\cap\pyramidlow\}\subset \blockcode_1$ such that $p\in R_\lambda$ for all $\lambda$. However, by assumption, $R_\lambda$ is not bounded from above in $\pyramidlow$. In other words, $R_\lambda$ and thus $B_\lambda$ must contain at least one of the two line segments
\begin{equation*}\left([p_x,-m)\times\{p_y\}\right) \cap\pyramidlow\qquad\text{and}\qquad \left(\{p_x\}\times[p_y,-m)\right)\cap\pyramidlow.\end{equation*}
Hence, either there exists an infinite number of blocks $B_\lambda\in\blockcode_1$ such that \begin{equation*}\left([p_x,-m)\times\{p_y\}\right) \cap\pyramidlow \subseteq B_\lambda,\end{equation*} or there exists an infinite number of blocks $B_\lambda\in\blockcode_1$ such that \begin{equation*}\left(\{p_x\}\times[p_y,0)\right)\cap\pyramidlow\subseteq B_\lambda.\end{equation*} 
We conclude that $\dim N_q = \infty$ for all $q\in \left([p_x, -m)\times \{p_y\}\right) \cap\pyramidlow$ or $\dim N_q = \infty$ for all $q\in\left(\{p_x\}\times[p_y,-m)\right)\cap\pyramidlow$. This contradicts the assumption that $N$ is pfd. 
\end{proof}

\begin{lem}
\label{lem:strip-split}
Let $M\in\catper{\mathsf{St}_m}$ be as in the statement of Theorem~\ref{thm:strip}, and let $\blockcode_1$ and $\blockcode_2$ be as in Corollary~\ref{cor:decomp}. Then, 
\begin{equation*}        M \simeq N \oplus \bigoplus_{B\in\sbirth} \field_B \oplus \bigoplus_{B\in\sdeath} \field_B,\end{equation*}
where $\sbirth\subseteq\blockcode_1$ is the collection of all birth quadrants, and $\sdeath\subseteq\blockcode_2$ is the collection of all death quadrants, and $N$ contains no summand of the form $\field_B$ where $B$ is a birth quadrant in $\pyramidhigh$ or a death quadrant in  $\pyramidlow$.
\end{lem}
\begin{proof}
Recall that $B$ is a death quadrant in $\pyramidlow$  if and only if its support is bounded from above by some $x\in \pyramidlow$. Dually, a birth quadrant $B$ in $\pyramidhigh$ is a  birth quadrant if and only if it is bounded from below by some $x\in \pyramidhigh$. In either case, there is enough space around their supports to extend them to summands of $M$. In particular, if $B\in \sdeath$, then the inclusions and projection maps
\begin{equation*}\field_B \xrightarrow{i_1}  \basemod_{|\pyramidlow}  \xrightarrow{j_1} \field_B\end{equation*}
such that $j_1 \circ i_1 = {\rm id}_{\field_B}$, extend to
\begin{equation*}\field_B \xrightarrow{i}  M|_{\mathsf{St}_m}  \xrightarrow{j} \field_B\end{equation*}
such that $j\circ i = {\rm id}_{\field_B}$ (here the interval $B$ is considered as a subset of $\mathsf{St}_m$). Doing this for every  death quadrant yields,
    \begin{equation*}
        M \simeq N' \oplus \bigoplus_{B\in\sdeath} \field_B.
    \end{equation*}
Iterating the above argument in the dual setting of birth quadrants in $\pyramidhigh$ gives,
    \begin{equation*}
        M \simeq N \oplus \bigoplus_{B\in\sbirth} \field_B \oplus \bigoplus_{B\in\sdeath} \field_B.
    \end{equation*}
\end{proof}

\begin{proof}[Proof of Theorem~\ref{thm:strip}]
By Lemma~\ref{lem:strip-split} we may assume that $\blockcode_1$ and $\blockcode_2$ contain no death and birth quadrants, respectively. Hence $\boxper$ is strongly exact and pfd by Lemma~\ref{lem:strip-pfd}. The result now follows from Lemma~\ref{lem:strip}. 
\end{proof}

\section{Conclusion}
\label{sec:conclusion}

In this paper we have provided a local characterization of pfd rectangle-decomposa\-ble modules in the plane. We have also shown that it is not possible to move beyond intervals of rectangular shape when considering square ``test subsets''. The following questions are natural to consider:
\begin{enumerate}
\item Allowing for test subsets of other shapes than squares, is it possible to locally characterize interval-decomposability beyond rectangles? 
\item Do the results generalize to persistence modules over $\R^n$? The injective and projective persistence modules can be characterized locally by considering the multi-graded betti numbers. Is there a more general class of interval-decomposable modules which can be locally characaterized? 
\item Is it possible to move beyond indecomposables which are pointwise 0- or 1-dimensional? Can one determine ``locally'' if $M$ decomposes into a direct sum of indecomposables belonging to a certain predefined class of indecomposables?
\item Does \Cref{thm:rec-dec-weak-exact} hold without the assumption that $X$ and $Y$ both admit a countable coinitial subset? That is indeed the case in the block-decomposable case\cite[Thm.~1.3]{botnan2018decomposition}. We state this as a conjecture
\begin{conj}
    \Cref{thm:rec-dec-weak-exact} holds verbatim for all totally ordered sets $X$ and~$Y$.
\end{conj}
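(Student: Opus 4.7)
The plan is to remove the countable coinitial hypothesis from the proof of \Cref{thm:rec-dec-weak-exact}. The forward implication holds unconditionally, since each $\indimod[\rec]$ is weakly exact and this property is preserved by locally finite direct sums. For the converse, a careful review of Sections~\ref{sec:functorial-filtration}--\ref{sec:covering} shows that essentially all of the machinery goes through without countability: the functorial filtration, the counting functor, the double filtration and its weak exactness (\Cref{prop:filtweakexact}), the computations of \Cref{lem:computations-dfilt}, the direct-sum property of the rectangle filtrates (\Cref{sec:directsum}), the fact that they capture the right multiplicity (\Cref{sec:role}), and the covering argument via finite $t$-skeleta based on \Cref{thm:finite-case} (\Cref{sec:covering}) all remain valid. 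The single place where countability is genuinely used is in \Cref{prop:recsubmod}, where it is invoked to conclude exactness of the inverse limit of the short exact sequences $0 \to \filt{-}<s>\cap\dyingelts[s] \to \filt{-}<s>\oplus\dyingelts[s] \to \filt{+}<s> \to 0$, via Grothendieck's \cite[Chap.~0, Prop.~13.2.2]{Grothendieck1961}.

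One natural route is a Krull--Schmidt reduction that sidesteps \Cref{prop:recsubmod} entirely. By \Cref{thm:decomposition-general}, $\basemod$ decomposes as $\bigoplus_{i\in I}N_i$ with each $N_i$ indecomposable and of local endomorphism ring. Weak exactness descends to direct summands, because the kernels and images in each defining commutative square split as direct sums (this is essentially the content of \Cref{prop:Vplus-additive} read one level up). It therefore suffices to prove that every pfd weakly exact indecomposable persistence bimodule $N$ is a rectangle module. For each finite product subset $F \subseteq \poset$, the restriction $N_{|F}$ is pfd and weakly exact, hence rectangle-decomposable by \Cref{thm:finite-case}. Using uniqueness of the Krull--Schmidt decomposition on each such $F$, one tracks, for every $t$ in the support of $N$, the unique rectangle summand of $N_{|F}$ whose support contains $t$; a coherence argument should then patch these into a single rectangle $\rec \subseteq \poset$ and yield an isomorphism $N \simeq \indimod[\rec]$. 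A more hands-on alternative is to attack \Cref{prop:recsubmod} directly by transfinite recursion on a well-ordering of a coinitial subset of $\rec$, building the complement $\limrecsubmod \subseteq \limdyingelts$ stage by stage using finite-dimensional linear algebra at successor ordinals.

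The main obstacle in either approach is precisely the phenomenon that motivated the countability hypothesis originally: possible non-vanishing of $\varprojlim^{1}$ over an index set without a countable cofinal subset. In the Krull--Schmidt route, this resurfaces as the difficulty of choosing compatible rectangle summands across arbitrarily large finite restrictions of an indecomposable module; in the transfinite-recursion route, it resurfaces as the need to verify compatibility of the accumulated choices at limit ordinals. The block-decomposable analogue \cite[Thm.~1.3]{botnan2018decomposition} succeeds without any countability hypothesis by exploiting the strong rigidity of block modules (their supports contain entire coordinate axes or infinite bands, which forces a canonical extension). I would therefore look for an analogous rigidity feature of rectangle modules under weak exactness — for instance, some local-to-global property of the images $\Ifilt{\rec}{+}$ that constrains them tightly enough to bypass the Mittag--Leffler obstruction — and if none is found, this would be evidence that the conjecture requires qualitatively new techniques.
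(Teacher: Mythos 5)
What you are trying to prove is stated in the paper precisely as an open \emph{conjecture}: the authors themselves do not supply a proof, and your proposal does not close the gap either. What you do provide is a careful diagnosis of where the countability hypothesis enters (the Mittag--Leffler/Grothendieck argument in the proof of \Cref{prop:recsubmod}) and two candidate strategies, together with an honest admission that both run into the same obstruction. That diagnosis is accurate, and the reduction at the start of your Krull--Schmidt route is sound: weak exactness does descend to direct summands (the pointwise kernels and images of a biproduct split as biproducts, and then \Cref{lem:capdirectsum} or uniqueness of the decomposition recovers the equalities in each factor), so by \Cref{thm:decomposition-general} the conjecture is indeed equivalent to the statement that every pfd weakly exact indecomposable persistence bimodule is a rectangle module.

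The genuine gap lies in the ``coherence argument'' you wave at for patching the rectangle summands of the finite restrictions $N_{|F}$ into a global rectangle. Krull--Schmidt decompositions are unique only up to isomorphism and permutation of isomorphic factors, so for a fixed $t$ in the support of $N$ there is no canonical choice of which rectangle summand of $N_{|F}$ ``contains'' a given vector in $N_t$, and different finite $F$'s may force incompatible choices; this is exactly the $\varprojlim^1$-type non-canonicity that the countability hypothesis is used to control. The transfinite-recursion route suffers the same difficulty at limit ordinals, as you note. Your observation that block modules enjoy a rigidity feature (supports containing whole axes or bands) absent for general rectangles, and that this is why~\cite[Thm.~1.3]{botnan2018decomposition} goes through without countability, is exactly the right thing to say and matches the reason the paper leaves this as a conjecture rather than a theorem. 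In short, you have correctly located the obstruction and framed plausible attacks, but you have not produced a proof --- and no such proof exists in the paper either.
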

\end{enumerate}

\bibliographystyle{spmpsci}
\bibliography{./mybib}

\begin{thebibliography}{10}
\providecommand{\url}[1]{{#1}}
\providecommand{\urlprefix}{URL }
\expandafter\ifx\csname urlstyle\endcsname\relax
  \providecommand{\doi}[1]{DOI~\discretionary{}{}{}#1}\else
  \providecommand{\doi}{DOI~\discretionary{}{}{}\begingroup
  \urlstyle{rm}\Url}\fi

\bibitem{Azumaya1950}
Azumaya, G.: {Corrections and supplementaries to my paper concerning
  Krull-Remak-Schmidt's theorem}.
\newblock Nagoya Mathematical Journal \textbf{1}, 117--124 (1950)

\bibitem{bauer2021structure}
Bauer, U., Botnan, M.B., Fluhr, B.: Structure and interleavings of relative
  interlevel set cohomology.
\newblock arXiv preprint arXiv:2108.09298  (2021)

\bibitem{bendich2013homology}
Bendich, P., Edelsbrunner, H., Morozov, D., Patel, A., et~al.: Homology and
  robustness of level and interlevel sets.
\newblock Homology, Homotopy and Applications \textbf{15}(1), 51--72 (2013)

\bibitem{blanchette2021homological}
Blanchette, B., Br{\"u}stle, T., Hanson, E.J.: Homological approximations in
  persistence theory.
\newblock arXiv preprint arXiv:2112.07632  (2021)

\bibitem{botnan2018decomposition}
Botnan, M., Crawley-Boevey, W.: Decomposition of persistence modules.
\newblock Proceedings of the American Mathematical Society \textbf{148}(11),
  4581--4596 (2020)

\bibitem{Botnan2016}
Botnan, M., Lesnick, M.: Algebraic stability of zigzag persistence modules.
\newblock Algebraic \& geometric topology \textbf{18}(6), 3133--3204 (2018)

\bibitem{botnan2020rectangledecomposable}
Botnan, M.B., Lebovici, V., Oudot, S.: {On Rectangle-Decomposable 2-Parameter
  Persistence Modules}.
\newblock In: {36th International Symposium on Computational Geometry (SoCG
  2020)}, \emph{Leibniz International Proceedings in Informatics (LIPIcs)},
  vol. 164, pp. 22:1--22:16. Schloss Dagstuhl--Leibniz-Zentrum f{\"u}r
  Informatik, Dagstuhl, Germany (2020).
\newblock \doi{10.4230/LIPIcs.SoCG.2020.22}

\bibitem{botnan2021signed}
Botnan, M.B., Oppermann, S., Oudot, S.: Signed barcodes for multi-parameter
  persistence via rank decompositions and rank-exact resolutions.
\newblock arXiv preprint arXiv:2107.06800  (2021)

\bibitem{carlsson2009zigzag}
Carlsson, G., De~Silva, V., Morozov, D.: Zigzag persistent homology and
  real-valued functions.
\newblock In: Proceedings of the twenty-fifth annual symposium on Computational
  geometry, pp. 247--256. ACM (2009)

\bibitem{Cochoy2016}
Cochoy, J., Oudot, S.: Decomposition of exact pfd persistence bimodules.
\newblock Discrete \& Computational Geometry \textbf{63}(2), 255--293 (2020)

\bibitem{Crawley-Boevey2012}
Crawley-Boevey, W.: Decomposition of pointwise finite-dimensional persistence
  modules.
\newblock Journal of Algebra and its Applications \textbf{14}(05), 1550066
  (2015)

\bibitem{edelsbrunner2008persistent}
Edelsbrunner, H., Harer, J.: Persistent homology-a survey.
\newblock Contemporary mathematics \textbf{453}, 257--282 (2008)

\bibitem{MacLane1998}
{Mac Lane}, S.: Categories for the Working Mathematician, \emph{Graduate Texts
  in Mathematics}, vol.~5.
\newblock Springer-Verlag New York (1972)

\bibitem{oudot2015persistence}
Oudot, S.Y.: Persistence theory: from quiver representations to data analysis,
  vol. 209.
\newblock American Mathematical Society Providence, RI (2015)

\bibitem{riehl2017category}
Riehl, E.: Category theory in context.
\newblock Courier Dover Publications (2017)

\bibitem{Ringel1975}
Ringel, C.M.: {The indecomposable representations of the dihedral 2-groups}.
\newblock Mathematische Annalen \textbf{214} (1975)

\end{thebibliography}

\end{document}